\documentclass[11pt,a4paper]{amsart}
\usepackage[T1]{fontenc}
\usepackage{lmodern}
\usepackage[utf8]{inputenc}
\usepackage[margin=1in]{geometry}
\usepackage{amsmath,amssymb,amsthm,mathtools,mathrsfs}
\usepackage[expansion=false]{microtype}
\usepackage{enumitem,needspace}
\usepackage{booktabs,array}
\usepackage[hidelinks]{hyperref}
\usepackage{bookmark}
\allowdisplaybreaks[2]
\numberwithin{equation}{section}
\newtheorem{theorem}{Theorem}[section]
\newtheorem{proposition}[theorem]{Proposition}
\newtheorem{lemma}[theorem]{Lemma}
\newtheorem{corollary}[theorem]{Corollary}
\theoremstyle{definition}

\newtheorem{remark}[theorem]{Remark}

\newtheorem*{conjecture}{Conjecture}
\newcommand{\R}{\mathbb R}
\newcommand{\Z}{\mathbb Z}

\newcommand{\ii}{\mathrm i}
\newcommand{\dd}{\,\mathrm d}
\newcommand{\supp}{\operatorname{supp}}
\newcommand{\Imn}{\operatorname{Im}}
\newcommand{\Ren}{\operatorname{Re}}
\newcommand{\TV}{\operatorname{TV}}
\newcommand{\BB}{\mathfrak B}
\newcommand{\CC}{\mathcal C}
\newcommand{\DD}{\mathcal D}
\newcommand{\ip}[2]{\langle #1,#2\rangle}
\newcommand{\norm}[1]{\left\lVert#1\right\rVert}

\newcommand{\la}{\langle}
\newcommand{\ra}{\rangle}
\newcommand{\eps}{\varepsilon}

\title[Scattering for the three-dimensional cubic NLS]{Global well-posedness and scattering for the three-dimensional defocusing cubic Schr\"odinger equation in $H^s$, $s>\frac{1}{2}$}
\author{Qingtang Su}
\address{Qingtang Su
\newline \indent Academy of Mathematics and Systems Science, Chinese Academy of Sciences, Beijing, China.
\newline \indent Morningside Center of Mathematics, Beijing, China.}
\email{suqingtang@amss.ac.cn}

\author{Zehua Zhao}
\address{Zehua Zhao
\newline \indent School of Mathematics and Statistics, Beijing Institute of Technology,
\newline \indent Key Laboratory of Algebraic Lie Theory and Analysis, Ministry of Education,
Beijing 100081, China}
\email{zzh@bit.edu.cn}
\hypersetup{
 pdftitle={Global well-posedness and scattering for the three-dimensional defocusing cubic Schrodinger equation in Hs, s greater than 1/2},
 pdfauthor={Qingtang Su and Zehua Zhao},
 pdfsubject={Global well-posedness and scattering for the three-dimensional defocusing cubic NLS}
}
\begin{document}
\raggedbottom
\begin{abstract}
We prove global well-posedness and scattering for the three-dimensional
defocusing cubic nonlinear Schr\"odinger equation with arbitrary initial
data in $H^s(\R^3)$, $s>\frac{1}{2}$.
The proof combines the $I$-method with improved long-time bilinear
$L^2_{t,x}$ estimates for frequency-localized components of the solution.
The key high--low frequency estimate follows from a directional
interaction identity and an induction on frequency.
\end{abstract}
\maketitle

\section{Introduction}\label{sec:intro}
\subsection{Background}
We study the three-dimensional defocusing cubic nonlinear
Schr\"odinger equation
\begin{equation}\label{eq:NLS}
 \begin{cases}
  (\ii\partial_t+\Delta)u=|u|^2u,\\
  u(0)=u_0\in H^s(\R^3).
 \end{cases}
\end{equation}
The system \eqref{eq:NLS} is invariant under the scaling
\begin{equation}\label{eq:scaling}
 u_\lambda(t,x)=\lambda u(\lambda^2t,\lambda x),\qquad
 \|u_\lambda(0)\|_{\dot H^a}
 =\lambda^{a-\frac{1}{2}}\|u_0\|_{\dot H^a},
\end{equation}
which leaves $\dot H^{\frac{1}{2}}$ invariant. Thus $s=\frac{1}{2}$
is the scaling-critical regularity. For $s>\frac{1}{2}$ and $u_0\in H^s(\R^3)$, there exists
$T=T(\|u_0\|_{H^s})>0$ such that \eqref{eq:NLS} is locally well posed
on $[0,T)$; see \cite{CW,Cazenave,TaoBook}. Consequently, if $[0,T_*)$ is
the maximal forward interval of existence and $T_*<\infty$, then
\begin{equation}\label{eq:blowupcriterion}
 \limsup_{t\uparrow T_*}\|u(t)\|_{H^s}=\infty.
\end{equation}
For $s=\frac{1}{2}$, local well-posedness holds on $[0,T)$ for some
$T=T(u_0)>0$, which depends on the profile of the initial data, not only
on its $\dot H^{\frac{1}{2}}$ norm; see \cite{CW,TaoBook}.

With local well-posedness established, the natural questions are whether
the solution exists for all time and, if it does, how it behaves as
$t\to\pm\infty$. In the defocusing case, one expects scattering:
the solution should approach a free Schr\"odinger solution in the same
Sobolev norm. This leads to the following conjecture.
\begin{conjecture}
For every $u_0\in H^s(\R^3)$ with $s>\frac{1}{2}$, the solution to
\eqref{eq:NLS} is global and scatters in $H^s$ in both time directions.
At the scaling-critical level, the analogous statement is expected for
$u_0\in\dot H^{\frac{1}{2}}(\R^3)$, with scattering in $\dot H^{\frac{1}{2}}$.
\end{conjecture}

\eqref{eq:NLS} has been extensively studied. The mass and energy conservation laws provide a starting point.
For sufficiently regular solutions,
\begin{equation}\label{eq:massenergy}
 M(u(t))=\int_{\R^3}|u(t,x)|^2\dd x=M(u_0),
\end{equation}
and
\begin{equation}\label{eq:energy}
 E(u(t))=\frac{1}{2}\int_{\R^3}|\nabla u(t,x)|^2\dd x
       +\frac{1}{4}\int_{\R^3}|u(t,x)|^4\dd x
       =E(u_0).
\end{equation}
Conservation laws, local well-posedness and persistence of regularity give
global well-posedness in $H^1$; see \cite{Cazenave}. Scattering requires
spacetime estimates in addition. Ginibre and Velo \cite{GV} proved
scattering in the energy space using dispersive estimates and the Morawetz
estimate of Lin and Strauss \cite{LinStrauss78}:
\begin{equation}\label{eq:introLSMorawetz}
 \int_J\int_{\R^3}\frac{|u(t,x)|^4}{|x|}\dd x\dd t
 \lesssim \sup_{t\in J}\|u(t)\|_{\dot H^{\frac{1}{2}}}^2
 \lesssim \|u_0\|_2\sup_{t\in J}\|\nabla u(t)\|_2.
\end{equation}

Below $H^1$, the energy may be infinite, so this global existence
argument no longer applies directly. Bourgain \cite{Bourgain} overcame
this difficulty by Fourier truncation and the smoothing of the Duhamel part of the solution, proving global well-posedness for $s>\frac{11}{13}$. For
radial data, Bourgain used the Morawetz estimate \eqref{eq:introLSMorawetz} to prove global well-posedness and scattering for $s>\frac{5}{7}$.

Inspired by the Fourier truncation method, Colliander, Keel, Staffilani,
Takaoka, and Tao introduced the $I$-method in \cite{CKSTT02} and proved global well-posedness for $s>5/6$.
In 2004, they established the interaction Morawetz estimate
\cite[Corollary~2.3]{CKSTT04}:
\begin{equation}\label{eq:introInteractionMorawetz}
 \int_J\int_{\R^3}|u(t,x)|^4\dd x\dd t
 \lesssim \|u_0\|_2^2
      \sup_{t\in J}\|u(t)\|_{\dot H^{1/2}}^2.
\end{equation}
Together with the $I$-method and an almost conservation law, they proved
global well-posedness and scattering for $s>4/5$;
see \cite[Theorem~1.1]{CKSTT04}.

In the same line of argument, Dodson \cite[Theorem~1.2]{Dodson13}
combined the interaction Morawetz estimate with a linear--nonlinear
decomposition, proving global well-posedness and scattering for $s>5/7$.
The decomposition follows the approach of Roy \cite{Roy09} for the
cubic wave equation. Su \cite[Theorem~1.2]{Su} refined this argument
using a resonance correction to the modified energy, of the type developed
in \cite{CKSTTRes08}, obtaining global well-posedness and scattering
for $s>49/74$. On the other hand, in 2010, Kenig and Merle
\cite[Theorem~1.1]{KM} used concentration compactness and rigidity to show that a uniform $\dot H^{1/2}$ bound on the maximal
lifespan is sufficient for global existence and scattering in $\dot{H}^{1/2}$. For related results under additional assumptions on the data, see
\cite{DodsonCritical,DodsonBesov,ShenWu}.

Our aim is to treat arbitrary nonradial $H^s$ data for every
$s>\frac{1}{2}$, without these additional assumptions.

For this purpose, we need a better long-time estimate.
Indeed, for $\frac{1}{2}<s<1$, divide the time interval into subintervals
$J_j=[t_j,t_{j+1}]$ on which a spacetime norm is small. An almost
conservation law on each piece has the schematic form
\begin{equation}\label{eq:introshortincrement}
 |E(I_Nu(t_{j+1}))-E(I_Nu(t_j))|
 \lesssim N^{-\alpha}.
\end{equation}
The number of subintervals may depend on $N$. To see this, take
$\|u_0\|_{H^s}\le M$ and choose
$\lambda=d_{s,M}N^{-\frac{2(1-s)}{2s-1}}$, with $d_{s,M}>0$
sufficiently small, so that $E(I_Nu_\lambda(0))\le\frac{1}{2}$.
The rescaled mass
$\mu:=\max\{1,\|u_\lambda(0)\|_2\}$ satisfies
\[
 \mu\lesssim_{s,M}N^{(1-s)/(2s-1)}.
\]
On an interval where $E(I_Nu_\lambda)\le1$, interaction Morawetz \eqref{eq:introInteractionMorawetz} gives
$\|u_\lambda\|_{L^4_{t,x}}^4\lesssim_s\mu^3$, and hence
$O(\mu^3)$ small subintervals. The resulting energy bound is of size
$\mu^3N^{-\alpha}$. With these bounds, smallness requires
\[
 \alpha>\frac{3(1-s)}{2s-1}.
\]
No fixed $\alpha>0$ satisfies this condition for all $s>\frac12$.
We therefore seek a bound for the energy increment on the whole
interval $J$ that avoids the factor $\mu^3$ from summing over
subintervals.

Such a long-time estimate is available in the radial case. On an
interval $J\ni0$ with $E(I_Nu(t))\le1$, Dodson
\cite[Theorem~4.1]{Dodson19} proved, for $N$ sufficiently large depending on $s$ and $\norm{u_0}_{H^s}$, 
\begin{equation}\label{eq:dodsonlongtime}
 \|P_{>N/8}\nabla I_Nu\|_{L^2_tL^6_x(J\times\R^3)}\lesssim 1;
\end{equation}
the bound is independent of the
length of $J$. 
Combining this bound with the modified-energy estimates, Dodson proved global
well-posedness and scattering for all radial data with $s>\frac{1}{2}$.

\subsection{Main result}
The proof of the long-time Strichartz estimate \eqref{eq:dodsonlongtime}
uses radial Sobolev estimates and does not extend directly to nonradial data.
We replace that bound by an improved bilinear $L_{t,x}^2$ estimate for products
of high- and low-frequency components of the nonlinear solution.
We prove this bilinear estimate by an induction on frequency and
combine it with the $I$-method to obtain the following result.
\begin{theorem}\label{thm:main}
Let $s>\frac12$. The initial value problem \eqref{eq:NLS}
is globally well posed for every $u_0\in H^s(\R^3)$.
For every $M>0$, the corresponding solutions with
$\|u_0\|_{H^s}\le M$ satisfy
\begin{equation}\label{eq:uniformball}
 \sup_{t\in\R}\|u(t)\|_{H^s(\R^3)}\le C(s,M).
\end{equation}
Moreover, each solution scatters in both time directions:
there exist $u_\pm\in H^s(\R^3)$ such that
\begin{equation}\label{eq:scattering}
 \lim_{t\to\pm\infty}
 \|u(t)-e^{\ii t\Delta}u_\pm\|_{H^s(\R^3)}=0.
\end{equation}
\end{theorem}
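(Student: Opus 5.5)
The proof follows the $I$-method scheme of \cite{CKSTT04,Dodson19}, with Dodson's radial long-time Strichartz bound \eqref{eq:dodsonlongtime} replaced by a long-time bilinear estimate.

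\textbf{Reduction.} Fix $s\in(\frac12,1)$ and $\|u_0\|_{H^s}\le M$. Let $I_N$ be the standard Fourier multiplier equal to $1$ for $|\xi|\le N$ and to $(N/|\xi|)^{1-s}$ for $|\xi|\ge 2N$. Rescale by $\lambda=d_{s,M}N^{-\frac{2(1-s)}{2s-1}}$, so that $E(I_Nu_\lambda(0))\le\frac12$ and the mass satisfies $\mu\lesssim N^{(1-s)/(2s-1)}$. Let $J\ni0$ be the maximal interval on which $E(I_Nu_\lambda(t))\le1$. By the blowup criterion \eqref{eq:blowupcriterion}, the bound \eqref{eq:uniformball}, and the standard fact that a finite $L^4_{t,x}$ norm together with a uniform $H^s$ bound implies scattering, the theorem follows once I show $E(I_Nu_\lambda(t))\le\frac34$ on $J$ for one large $N=N(s,M)$. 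By continuity, $J=\R$ then follows. Interaction Morawetz \eqref{eq:introInteractionMorawetz} applied on $J$ gives $\|u_\lambda\|_{L^4_{t,x}(J)}^4\lesssim\mu^3$. The task is therefore to bound the energy increment on all of $J$ by $o(1)$ as $N\to\infty$, with the bound uniform in $|J|$ and losing at most a small power of $\mu$.

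\textbf{Long-time bilinear estimate.} This is the core of the argument. For dyadic $K\le N_1$, I aim to prove, on $J$,
\[
 \|(P_{N_1}I_Nu_\lambda)(P_{K}I_Nu_\lambda)\|_{L^2_{t,x}(J\times\R^3)}\lesssim \mu^{\theta}\,\frac{K}{N_1^{1/2}}\,\|P_{N_1}I_Nu_\lambda\|_{X}\,\|P_KI_Nu_\lambda\|_{X},
\]
with $\theta$ as small as needed. Here $X$ is an $L^\infty_tL^2_x$-type norm. The estimate must hold independently of the number of $L^4$-small subintervals.

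The idea is to use the directional (bilinear) interaction Morawetz identity. One computes $\partial_t$ of
\[
 \iint |v(y)|^2\,\frac{(x-y)_\omega}{|x-y|}\,\mathrm{Im}(\bar w\,\partial_\omega w)(x)\,dx\,dy,
\]
with $v=P_{\le K}u$ and $w=P_{\ge N_1}u$, or alternatively a one-dimensional version in a direction $\omega$ followed by an average over $\omega$. This yields $\|\,|\nabla|^{1/2}\ldots\|$, and after frequency localization it controls $N_1\|vw\|_{L^2}^2$ by $K\sup_t\|v\|_2^2\|w\|_{2}^2$ plus error terms. The error terms come from the commutators of the Littlewood–Paley projections with the nonlinearity, namely $P_{N_1}(|u|^2u)-|P u|^2Pu$ type terms.

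I would handle these errors by induction on frequency. One assumes the bilinear bound at all frequency pairs with high frequency $<N_1$, and also at frequency scales at or above $N_1$ with a constant $A$. One then closes the argument with constant $A/2+C$, using Strichartz estimates on the small subintervals only for the pieces where a gain $N_1^{-\delta}$ or $(K/N_1)^{\delta}$ is available. These gains make the sum over subintervals converge without paying the factor $\mu^3$.

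\textbf{Main obstacle.} I expect this error analysis to be the hard step. The dangerous term is the high–high–low interaction feeding into the low frequencies, since there is no separation of frequencies to exploit. The first thing I would try is to estimate it by the bilinear bound at the previous induction step, combined with the $I$-multiplier's decay $(N/|\xi|)^{1-s}$, using that $s>\frac12$ gives enough summability.

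\textbf{Conclusion.} With the bilinear estimate in hand, I would redo the almost conservation computation. Write $\partial_tE(I_Nu)$ as a four- and six-linear multiplier expression, and on $J$ place each pair consisting of a high frequency ($\gtrsim N$) and a lower frequency into the long-time bilinear $L^2$ bound. This should give
\[
 \sup_{t\in J}|E(I_Nu_\lambda(t))-E(I_Nu_\lambda(0))|\lesssim \mu^{C\theta}N^{-\alpha}
\]
for some fixed $\alpha>0$. Choosing $\theta$ so small that $C\theta(1-s)/(2s-1)<\alpha$ and then taking $N$ large closes the bootstrap. This gives global existence, \eqref{eq:uniformball} after undoing the scaling, and a global $L^4_{t,x}$ bound. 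Scattering \eqref{eq:scattering} then follows by the standard Strichartz argument, which shows that the Duhamel integral converges in $H^s$.
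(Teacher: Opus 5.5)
\textbf{Gap: the long-time bilinear estimate.} Your outer framework matches the paper's: rescaling so that $E(I_Nu_\lambda(0))\le\frac12$, a bootstrap on $E(I_Nu_\lambda)$, quartic/sextic almost conservation fed by long-time bilinear $L^2_{t,x}$ bounds, and scattering from global $L^4_{t,x}$ and $H^s$ bounds. The gap is in the bilinear estimate itself, which is the whole difficulty. As proposed, it does not close.

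First, the closing mechanism fails. A fixed gain $N_1^{-\delta}$ or $(K/N_1)^{\delta}$, summed over the $O(\mu^3)$ Strichartz subintervals, gives $\mu^3N^{-\delta}$ with $\mu^3\sim N^{3(1-s)/(2s-1)}$. This is exactly the obstruction described in the introduction, and it fails for $s$ near $\frac12$. You also give no mechanism that produces a constant $\mu^\theta$ with $\theta$ arbitrarily small. An upward induction has no good base case either, because at low thresholds only the rough bound $\BB\lesssim_sA^4$ is available. The paper avoids subinterval sums altogether. It proves the multiplicative recursion $\BB(R)\le C_s+C_s\frac{\ell_N^3}{R}\BB(c_1R)$ for $N^{1/2}\le R\le N$. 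Iterating it $\asymp\log N$ times down to $N^{1/2}$ yields a factor $\exp(-c(\log N)^2)$, which absorbs the polynomial bound $A^4$ and gives $\BB(c_EN)\lesssim_s1$.

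Second, your interaction functional cannot produce such a recursion, for three reasons.
\begin{enumerate}
\item The mass of $P_{\le K}u$ is only bounded by $\mu^2$, not by $\|u_K\|_2^2\lesssim_s(Km_K)^{-2}$. This loses a factor $\mu^2K^2m_K^2$, which is polynomially large in $N$. The paper instead pairs the mass of a cone-localized $h=Q^{(\nu)}_Hu$ with the energy density $e_L=\mu^{-2}|w|^2+|\nabla w|^2+\frac12|v|^4$, where $w=P_{\le L}u$ and $v=P_{\le L}w$. Its local nonlinear terms cancel, its total is $O_s(m_L^{-2})$, and the transverse trace costs only $\log(2+L\mu)$.
\item The cutoff must be $L\ge\max\{4K,\min(H,N)/C_*\}$, not $L\sim K$. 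Then the remainder $P_{\le L}(F(u)-F(v))$ lives at frequencies $\gtrsim R$ and carries the small coefficient $\delta_L\ell_L^3\lesssim\ell_N^3/R$ even when $K$ is tiny.
\item The weight must be band-limited. This is why $|h|^2$ is smoothed along $\omega$ by $k_r$ with $r=\eps L$. Band-limitation forces two comparable top frequencies in every nonzero term of the weighted error, so each term is bounded by two bilinear norms, i.e.\ one factor of $\BB$. Your weight, $\frac{(x-y)_\omega}{|x-y|}$ against $\Imn(\bar w\partial_\omega w)$, has no such localization. So the high--high--low terms you flag as the main obstacle remain uncontrolled.
\end{enumerate}

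Finally, a separation-based identity does not cover $K\sim N_1$. The paper treats that case with the forced interaction Morawetz estimate for $Q_Hu$ (Corollary~\ref{cor:diagonal}). A minor point: the directional identity plus transverse Bernstein gives a factor $K^2/N_1$, not $K/N_1$, in your heuristic.
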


\begin{remark}
Theorem~\ref{thm:main} does not include $s=\frac{1}{2}$.
The proof uses $s>\frac{1}{2}$ both to normalize the modified energy
with quantitative control of the rescaled mass and to obtain the
high-frequency decay needed to absorb logarithmic losses.
These estimates are not uniform as $s\downarrow\frac{1}{2}$.
Consequently, the present argument does not yield an a priori
$\dot H^{\frac{1}{2}}$ bound at the endpoint, as required by the
Kenig--Merle scattering criterion~\cite[Theorem~1.1]{KM}.

\end{remark}

\subsubsection{Idea of proof}
\noindent\textbf{1. Modified energy and the polynomial loss.} Fix
$\frac{1}{2}<s<1$ and write $u$ for the rescaled solution. On a
finite interval $J\ni0$ with $\sup_JE(I_Nu)\le1$, set $U=I_Nu$ and
$F(u)=|u|^2u$. The modified energy satisfies
\[
 \frac{\dd}{\dd t}E(U)=\mathcal E_4+\mathcal E_6,\qquad
 \mathcal E_4=\Imn\int\Delta\overline U\,[F(U)-I_NF(u)];
\]
see \eqref{eq:exactenergy}. The quartic term gives the leading
contribution to the almost conservation estimate. Write $u_H=P_Hu$ and
$m_H=\min\{1,(N/H)^{1-s}\}$; the energy bound gives
$\|u_H\|_{L^\infty_tL^2_x}\lesssim_s(Hm_H)^{-1}$. A nonzero quartic
contribution has dyadic frequencies $H\sim H'\ge K_1\ge K_2$ with
$H'\ge c_EN$, where $c_E$ is fixed as in \eqref{eq:energythreshold}.
Its multiplier has a convolution kernel of total variation
$O_s(H^2m_H^2)$; the dyadic piece $\mathcal E_{4;\vec H}$,
$\vec H=(H,H',K_1,K_2)$, therefore satisfies
\[
\begin{split}
 \int_J|\mathcal E_{4;\vec H}(t)|\dd t
 \lesssim_s H^2m_H^2
 &\sup_y\|u_H(t,x+y)u_{K_1}(t,x)\|_{L^2_{t,x}(J)}\\
 {}\times{}&\sup_z\|u_{H'}(t,x+z)u_{K_2}(t,x)\|_{L^2_{t,x}(J)},
\end{split}
\]
where the fixed relative translations $y,z$ come from the kernel
representation of the multiplier. The two factors of each product
may have comparable or widely separated frequencies, and both cases
must be estimated on the whole of $J$. The preliminary estimates
\eqref{eq:roughA} hold with $A=C_s(1+\mu^3)$, where
$\mu\ge\max\{1,\|u(0)\|_2\}$; see Lemma~\ref{lem:coarse}.
Combined with the inhomogeneous bilinear estimate, they give
$\sup_y\|u_H(t,x+y)u_K(t,x)\|_{L^2_{t,x}(J)}
\lesssim_sA^2/(H^{\frac{3}{2}}m_Hm_K)$
for $K\ll H$, and two such applications retain the polynomial loss
$A^4$ in the energy increment. Dodson exploits the smoothing of the
nonlinear part \cite[Section~5]{Dodson13}, while Su also uses a
further energy correction \cite[Section~5.1]{Su}. Here we retain
$E(I_Nu)$ and improve the product estimate for the full nonlinear
solution.

\noindent\textbf{2. Long-time bilinear estimates.} The guiding
example is a pair of free packets. Consider two model free
solutions with carrier frequencies of sizes $H$ and $K$,
respectively, where $H\gg K$. Both have envelope width $K^{-1}$
and unit $L^2$ norm. Their relative
group velocity is comparable to $H$, so a single crossing lasts
$O((HK)^{-1})$; on the overlap, of volume $O(K^{-3})$, each packet
has amplitude $O(K^{\frac{3}{2}})$, hence one crossing contributes
$O(K^6\cdot K^{-3}\cdot(HK)^{-1})=O(K^2/H)$ to the squared
$L^2_{t,x}$ norm. Multiplying by the squared $L^2$ bounds for
$u_H$ and $u_K$ gives
$\frac{K^2}{H}(Hm_H)^{-2}(Km_K)^{-2}
=(H^3m_H^2m_K^2)^{-1}$.
This suggests the same frequency weights for nonlinear solutions,
provided the contributions of the localized nonlinearities can be
controlled. Set $\ell_H=1+\log(2+A\mu H^2)$.
\par\needspace{4\baselineskip}
For $R\ge1$, let
$\BB(R)\ge1$ be the least constant such that, for dyadic $H,K$,
\begin{equation}\label{eq:introBdef}
 \sup_{x_0\in\R^3}
 \|u_H(t,x+x_0)u_K(t,x)\|_{L^2_{t,x}(J)}^2
 \le\BB(R)\frac{\ell_H}{H^3m_H^2m_K^2},
 \qquad H\ge R,\quad K\le H;
\end{equation}
the factor $\ell_H$ absorbs the logarithmic losses of the
transverse trace estimate and of the low-frequency summation below.
The definition covers both frequency regimes. For $K\sim H$, we
use H\"older's inequality and interaction Morawetz for the
frequency-localized equation, retaining the nonlinear terms in
the recurrence below; see Corollary~\ref{cor:diagonal}. The
separated case $K\ll H$ is treated in Steps 3 and 4.

The preliminary estimates give $\BB(R)\lesssim_sA^4$, and
Lemma~\ref{lem:pair} supplies an additional gain for very small $K$
that makes the low-frequency sums convergent.
Section~\ref{sec:energy} then proves
\begin{equation}\label{eq:introenergy}
 \int_J\left|\frac{\dd}{\dd t}E(I_Nu(t))\right|\dd t
 \lesssim_s\BB(c_EN)
       \left(\frac{\ell_N^3}{N}+\frac{\ell_N}{N^2}\right),
\end{equation}
where the two terms bound the quartic and sextic contributions,
respectively; the sextic bound uses the multiplier gain in
Lemma~\ref{lem:compressed}. It remains to prove $\BB(c_EN)\lesssim_s1$.

\noindent\textbf{3. Directional interaction.} For separated
frequencies we localize the high frequency to a cone: $h=Q_Hu$ has
Fourier support in
$\{\xi:\xi\cdot\omega\ge\gamma H,\ |\xi|\le C_hH\}$.
The picture is that of a high-frequency packet travelling with
group velocity comparable to $H$ in the direction $\omega$
through a slowly varying low-frequency background. This suggests
an interaction across hyperplanes perpendicular to $\omega$.
Fix $N^{\frac{1}{2}}\le R\le N$ and $H\ge R$, with $N$ and $H/K$
sufficiently large. Choose the low-frequency cutoff $L$ as in
\eqref{eq:Lchoice}, so that $K\lesssim L\ll H$, $m_L\sim_sm_K$,
and $L\gtrsim R$.

Set $w=P_{\le L}u$ and $v=P_{\le L}w=P_{\le L}^2u$.
We retain the second projection since $P_{\le L}^2\ne P_{\le L}$. We pair the mass of $h$ with the low-frequency energy
density $e_L=\mu^{-2}|w|^2+|\nabla w|^2+\frac{1}{2}|v|^4$, an
auxiliary quantity used in the interaction estimate---the modified
energy remains $E(I_Nu)$. Its total $E_L=\sup_J\int e_L$ is
$O_s(m_L^{-2})$; using the low-frequency mass instead would
introduce a factor $\mu^2$. Before forming the half-space interaction, we smooth $|h|^2$
in the direction $\omega$ with the kernel $k_r$ from
\eqref{eq:densitykernel}, where $r=\varepsilon L$ and
$\varepsilon>0$ is fixed and small. The nonnegativity and unit
integral of $k_r$ preserve positivity and mass, while
$\supp\widehat{k_r}\subset[-2r,2r]$ gives the required Fourier
localization of the interaction weight.
Set $\rho_h=k_r*_\omega|h|^2$ and $M_h=\sup_J\|h\|_2^2$.
For fixed $\eta\in\R$, define
\[
\begin{gathered}
 \mathcal Q_\eta(t)
 =\iint_{(x-y)\cdot\omega>\eta}\rho_h(t,x)e_L(t,y)\dd x\dd y,\\
 0\le\mathcal Q_\eta(t)\le M_hE_L;
\end{gathered}
\]
see \eqref{eq:relativeaction}. The integral weights those pairs
for which the high-frequency mass lies more than $\eta$ ahead
of the low-frequency energy in the direction $\omega$.

The choice of $e_L$ preserves a cancellation in the weighted
energy identity: the local nonlinear contributions from
$|\nabla w|^2$ and $\frac{1}{2}|v|^4$ cancel. Besides the transport
terms, this leaves commutators and the contribution of
$P_{\le L}(F(u)-F(v))$.
Differentiating $\mathcal Q_\eta$ and applying Plancherel in the
$\omega$ direction gives a positive principal term with coefficient
comparable to $H$. The commutators are absorbed when
$H\ge C(L+E_L)$, as ensured by the choice of $L$ and by taking $N$
large. After integration in time, the endpoint term, bounded by
$M_hE_L$, contributes
$H^{-1}\|h\|_{L^\infty_tL^2_x}^2E_L\lesssim_sH^{-3}m_H^{-2}m_L^{-2}$,
and a transverse trace estimate converts the interaction into
control of the product $hu_K$ at the price of the logarithm
$\ell_H$.

\noindent\textbf{4. Nonlinear estimates and frequency recurrence.}
The remaining contributions come from $G_H=Q_HF(u)$ in the
equation for $h$ and $R_L=P_{\le L}(F(u)-F(v))$ in the equation
for $w$. Since $u-v$ has
no frequencies below $L$, the narrow Fourier support of the weight
forces two comparable largest frequencies in every nonzero
contribution to the weighted low-frequency error, and the quartic
terms involving $h$ and $G_H$ have the same frequency structure;
we therefore estimate these contributions using two bilinear
bounds, each contributing $\BB^{\frac{1}{2}}$, hence only one
factor of $\BB$ in total. In the sextic part of the low-frequency
energy error, the two lowest-frequency factors are placed in
$L^\infty_{t,x}$. With $\delta_T=(Tm_T^2)^{-1}$,
Proposition~\ref{prop:actualflux} and \eqref{eq:normalization} give
\[
\begin{split}
 &\frac{H^3m_H^2m_K^2}{\ell_H}
   \sup_{x_0\in\R^3}\|h(t,x+x_0)u_K(t,x)\|_{L^2_{t,x}(J)}^2\\
 &\qquad\lesssim_s 1+\BB(cH)\delta_H\ell_H^3
          +\BB(cL)\delta_L\ell_L^3(1+\delta_L).
\end{split}
\]
The three terms on the right come from the endpoint bound and
the contributions of $G_H$ and $R_L$, respectively. Since
$H,L\gtrsim R$, the two error coefficients are bounded by
$C_s\ell_N^3/R$. Taking $L\sim K$ alone would not ensure this
bound when $K$ is arbitrarily small. Since $\BB$ is nonincreasing and $cH,cL\ge c_1R$ for a
fixed $c_1\in(0,1)$, we have $\BB(cH),\BB(cL)\le\BB(c_1R)$; summing
over a fixed finite angular partition and combining the two
frequency regimes gives
\[
 \BB(R)\le C_s+C_s\frac{\ell_N^3}{R}\BB(c_1R),
 \qquad N^{\frac{1}{2}}\le R\le N;
\]
see Proposition~\ref{prop:recursion}.

\noindent\textbf{5. Iteration and energy closure.} With the
parameters chosen in Section~\ref{sec:completion}, $A+\mu$ is
bounded by a fixed power of $N$ and $\ell_N=O_{s,M}(\log N)$.
Iterating from $R=c_EN$ until the threshold falls below
$N^{\frac{1}{2}}$ takes $\asymp\log N$ steps. Each step contributes
a factor at most $C_s\ell_N^3N^{-\frac{1}{2}}$. Their product
therefore decays faster than any fixed inverse power of $N$ and
absorbs the final bound $\BB\lesssim_sA^4$, while the additive
constants sum geometrically. Thus $\BB(c_EN)\lesssim_s1$. Starting
from the normalized initial energy $E(I_Nu(0))\le\frac{1}{2}$, one
sufficiently large $N=N(s,M)$ makes the right-hand side of
\eqref{eq:introenergy} smaller than $\frac{1}{4}$ and improves the energy bound from one
to $\frac{3}{4}$. Continuity gives global control of the modified
energy, and undoing the scaling yields the uniform $H^s$ bound.
Interaction Morawetz and Strichartz estimates then give
$u\in L^\infty(\R;H^s)\cap L^2(\R;W^{s,6})$ for the original
solution. The corresponding nonlinear estimate makes the Duhamel
tails tend to zero in $H^s$, yielding scattering in both time
directions.
\begin{remark}\label{rem:two-dimensional}
For the nonlinearities $F(u)=|u|^{2k}u$ with integer $k\ge2$,
the cancellation in the auxiliary low-frequency energy persists
with potential density $|v|^{2k+2}/(k+1)$.
We expect that, with suitable modifications, the present approach
can be adapted to the nonradial counterpart of the two-dimensional
results in \cite{Dodson19}: global well-posedness and
scattering for initial data in $H^s(\R^2)$, $s>1-\frac{1}{k}$.
This would require suitable two-dimensional trace and bilinear
estimates, together with bounds for the higher-order nonlinear terms.
We do not pursue this extension here.
\end{remark}

\subsection{Organization}
Section~\ref{sec:prelim} proves the preliminary bounds for $u$ and
$\BB$. Section~\ref{sec:energy} bounds the modified-energy
increment in terms of this quantity. We prove the directional
interaction estimate in Section~\ref{sec:flux}, then estimate its
nonlinear terms and complete the frequency induction in
Section~\ref{sec:recursion}. Section~\ref{sec:completion} combines
these bounds with scaling and local stability to prove
Theorem~\ref{thm:main}.

\subsection{Notation}\label{sec:notation}
We write $X\lesssim Y$ if $X\le CY$ for a constant depending only on
$s$ and the fixed cutoff functions, unless other dependencies are
indicated. The notation $X\sim Y$ means $X\lesssim Y\lesssim X$.
We fix the cutoff functions independently of the frequency parameters
and the solution. All frequency comparisons use fixed constants.
All frequency sums are over dyadic numbers in $2^{\Z}$.
For a finite complex measure $\nu$ on $\R^d$, we write $|\nu|$ for
its total variation measure and $\|\nu\|_{\TV}=|\nu|(\R^d)$ for
its total variation norm.
For an interval $J$, mixed norms are taken over $J\times\R^3$;
we abbreviate $L^p_tL^p_x$ to $L^p_{t,x}$. We write $\|f\|_p$ for
the spatial $L^p$ norm and set $\langle\nabla\rangle=(1-\Delta)^{\frac{1}{2}}$.
We use the Fourier transform
and complex inner product
\[
 \widehat f(\xi)=\int_{\R^3}e^{-2\pi\ii x\cdot\xi}f(x)\dd x,
 \qquad \ip fg=\int_{\R^3}\overline f g\dd x.
\]
We set $D=(2\pi\ii)^{-1}\nabla$, so that
$\widehat{a(D)f}(\xi)=a(\xi)\widehat f(\xi)$.
The same Fourier convention is used in lower-dimensional variables.
Choose a real radial function $\chi\in C_c^\infty(\R^3)$ with
$0\le\chi\le1$, equal to one on $|\xi|\le1$ and zero on $|\xi|\ge2$.
The homogeneous Littlewood--Paley projections are defined by
\[
 \widehat{P_Hf}(\xi)
 =\bigl[\chi(\xi/H)-\chi(2\xi/H)\bigr]\widehat f(\xi),
 \qquad f_H=P_Hf.
\]
Thus $\sum_HP_H=1$ away from the origin and
$\supp\widehat f_H\subset\{H/2\le|\xi|\le2H\}$.
For any $\rho>0$, set
$P_{\le\rho}=\chi(D/\rho)$, $P_{>\rho}=1-P_{\le\rho}$, and
$P_{\ge\rho}=1-P_{\le\rho/2}$. We write
$f_{\le\rho}=P_{\le\rho}f$ and similarly for the other cutoffs.
As usual, these are smooth Fourier cutoffs, not idempotent projections.
We use standard Bernstein and
Littlewood--Paley estimates throughout; see \cite[Appendix~A]{TaoBook}.
All relative spatial translations in spacetime norms are independent
of time.

In the multilinear arguments, we first truncate the dyadic expansions.
The summable bounds proved below justify removal of these truncations
by dominated convergence. For the bilinear estimates, this limit is
taken at each fixed relative translation before taking the supremum
over translations.

\section{Preliminary estimates}\label{sec:prelim}
We collect the linear estimates and use the local theory and interaction
Morawetz to obtain preliminary spacetime bounds under
$\sup_J E(I_Nu)\le1$. We then record the bilinear estimates and
dyadic sums needed for the energy increment in Section~\ref{sec:energy}.

\subsection{Fourier multipliers and the \texorpdfstring{$I$}{I}-operator}\label{sec:prelim-multipliers}
Fix $\frac{1}{2}<s<1$. Following \cite[Definition~2.2]{Su}, for $N\ge1$ define
$I_N:H^s(\R^3)\to H^1(\R^3)$ by
\begin{equation}\label{eq:Idef}
 \widehat{I_Nf}(\xi)=m_N(\xi)\widehat f(\xi),\qquad
 m_N(\xi)=
 \begin{cases}
  1,&|\xi|\le N,\\
  (N/|\xi|)^{1-s},&|\xi|\ge2N,
 \end{cases}
\end{equation}
where $m_N(\xi)=m_1(\xi/N)$ for a fixed positive, smooth, radial
function $m_1$ that is nonincreasing in $|\xi|$. We write $I=I_N$.
The associated modified energy is
\begin{equation}\label{eq:modifiedenergy}
 E(I_Nu(t))
 =\frac12\|\nabla I_Nu(t)\|_{L^2_x}^2
  +\frac14\|I_Nu(t)\|_{L^4_x}^4.
\end{equation}
For dyadic $H>0$, set
\begin{equation}\label{eq:weights}
 m_H=\min\{1,(N/H)^{1-s}\}.
\end{equation}
Then $m_N(\xi)\sim_s m_H$ for $H/2\le|\xi|\le2H$.

For $b\in C_c^6(B(0,R_0))$ and $\rho>0$, integration by parts
and scaling give
\begin{equation}\label{eq:smoothkernel}
 K_{b(D/\rho)}(x)=\rho^3\check b(\rho x),\qquad
 \|K_{b(D/\rho)}\|_1\le C_{R_0}\|b\|_{W^{6,\infty}}.
\end{equation}
Indeed, $|\check b(x)|\lesssim_{R_0}\|b\|_{W^{6,\infty}}(1+|x|)^{-6}$.
Thus the kernel bound is uniform for symbols supported in a common
ball with bounded $W^{6,\infty}$ norms.

Let $p_H(\xi)=\chi(\xi/H)-\chi(2\xi/H)$ be the symbol of $P_H$.
For $1\le j\le3$, define
\[
 \widehat K_{H,j}(\xi)
 =\frac{p_H(\xi)\xi_j}{2\pi\ii|\xi|^2m_N(\xi)}.
\]
The definitions and \eqref{eq:smoothkernel} give
\begin{equation}\label{eq:dyadicIestimate}
 P_Hf=\sum_{j=1}^3K_{H,j}*\partial_jI_Nf,\qquad
 \sum_{j=1}^3\|K_{H,j}\|_{L^1}\lesssim_s(Hm_H)^{-1}.
\end{equation}
Indeed, $(Hm_H)\widehat K_{H,j}(H\xi)$ has uniformly bounded
$W^{6,\infty}$ norm and is supported in $\frac{1}{2}\le|\xi|\le2$.

To estimate $\nabla I_N(|u|^2u)$, we use the following $I_N$ version
of the Kato--Ponce product rule \cite{KP88}, with a constant independent
of $N$.

\begin{lemma}\label{lem:Iproduct}
Let $\frac{1}{2}<s<1$ and
\[
 1<r,p_1,q_1,p_2,q_2<\infty,\qquad
 \frac1r=\frac1{p_1}+\frac1{q_1}
        =\frac1{p_2}+\frac1{q_2}.
\]
For Schwartz functions $f,g$,
\begin{equation}\label{eq:Iproduct}
 \||\nabla|I_N(fg)\|_r\le C\left(
 \||\nabla|I_Nf\|_{p_1}\|g\|_{q_1}
 +\|f\|_{p_2}\||\nabla|I_Ng\|_{q_2}\right).
\end{equation}
Here $C$ depends on $s,r,p_1,q_1,p_2,q_2$ and the fixed cutoff
functions, but not on $N\ge1$.
\end{lemma}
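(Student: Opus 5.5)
We prove \eqref{eq:Iproduct} with a Littlewood--Paley paraproduct decomposition and the Coifman--Meyer multiplier theorem. The constant will be independent of $N$ because $|\nabla|I_N$ has symbol $\sigma_N(\xi)=|\xi|m_N(\xi)$, which is a rescaling, $\sigma_N(\xi)=N\sigma_1(\xi/N)$. Moreover, $\sigma_1$ is radial, increasing and of ``regularly varying'' type: it equals $|\xi|$ near the origin and $|\xi|^{s}$ up to constants for $|\xi|\ge2$. The bounds we use will depend only on the doubling and symbol-regularity properties of $\sigma_1$, and these are dilation invariant.

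Write
\[
fg=\sum_{K}P_Kf\,P_{\le K/8}g+\sum_{K}P_{\le K/8}f\,P_Kg+\sum_{K\sim K'}P_Kf\,P_{K'}g=:\Pi_1+\Pi_2+\Pi_3.
\]
By symmetry it suffices to treat $\Pi_1$ and $\Pi_3$, placing the derivative on $f$.

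For $\Pi_1$, the product $P_Kf\,P_{\le K/8}g$ has Fourier support in $|\xi|\sim K$. Hence $|\nabla|I_N\Pi_1$ is the bilinear operator with symbol
\[
\sum_K\frac{\sigma_N(\xi+\eta)}{\sigma_N(\xi)}\,\tilde p_K(\xi+\eta)p_K(\xi)\chi(8\eta/K),
\]
applied to $(|\nabla|I_Nf,g)$. Here $\tilde p_K$ is a fattened annulus cutoff. On the support, $|\eta|\le|\xi|/4$, and the ratio $\sigma_N(\xi+\eta)/\sigma_N(\xi)$ is smooth with
\[
|\partial_\xi^\alpha\partial_\eta^\beta(\cdot)|\lesssim|\xi|^{-|\alpha|-|\beta|}.
\]
This holds uniformly in $N$, since $\sigma_1$ satisfies $|\partial^\alpha\sigma_1(\zeta)|\lesssim_s|\zeta|^{-|\alpha|}\sigma_1(\zeta)$ and this condition is dilation invariant. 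The total symbol is therefore a Coifman--Meyer symbol with constants independent of $N$, and we get
\[
\||\nabla|I_N\Pi_1\|_r\lesssim\||\nabla|I_Nf\|_{p_1}\|g\|_{q_1}.
\]
Alternatively, we can expand the ratio in a Fourier series on the support and use the square-function and maximal-function estimates. The term $\Pi_2$ is handled in the same way, giving the second term of \eqref{eq:Iproduct}.

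For $\Pi_3$, the output frequency is $|\xi+\eta|\lesssim K$. We write
\[
|\nabla|I_N(P_Kf\,P_{K'}g)=\sum_{J\lesssim K}P_J|\nabla|I_N(P_Kf\,P_{K'}g)
\]
and compare $\sigma_N$ at the output frequency $J$ with $\sigma_N$ at $K$. Since $\sigma_1$ is increasing and satisfies
\[
\sigma_1(a)/\sigma_1(b)\lesssim(a/b)^{s}\quad\text{for }a\le b,
\]
with $s>0$ (here $s>\tfrac12$ is more than enough), we gain a factor $(J/K)^{s}$. This makes the sum over $J\le K$ geometrically convergent. The symbol
\[
\frac{\sigma_N(\xi+\eta)}{\sigma_N(\xi)}\,p_J(\xi+\eta)p_K(\xi)p_{K'}(\eta)
\]
equals $(J/K)^{s}$ times a uniformly Coifman--Meyer symbol. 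For fixed $J/K=2^{-j}$, summing over $K$ gives a bilinear operator bounded by $C(j)\,2^{-js}$ on $L^{p_1}\times L^{q_1}\to L^r$, provided the derivatives of $p_J(\xi+\eta)$ at scale $J$ do not spoil the constant. This is the main obstacle: the naive symbol estimates lose powers of $K/J$. To avoid this loss, we will not treat the full symbol as one Coifman--Meyer symbol. Instead we write
\[
P_J|\nabla|I_N=\sigma_N(J)\,\tilde P_J,
\]
where $\tilde P_J$ is a uniformly bounded multiplier; this uses $\sigma_N(\xi)\sim\sigma_N(J)$ and the symbol bounds on the annulus. Then for fixed $j$, the sum $\sum_K\tilde P_{2^{-j}K}(\cdots)$ is controlled by square functions: the Fefferman--Stein vector-valued inequality together with $\|(\sum_K|P_K\sigma_N(D)f|^2)^{1/2}\|_{p_1}\lesssim\||\nabla|I_Nf\|_{p_1}$ and $\sup_{K'}|P_{K'}g|\lesssim Mg$. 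This yields a bound of at most $C\langle j\rangle2^{-js}$, which is summable in $j$.

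Combining the three pieces gives \eqref{eq:Iproduct} with a constant depending only on $s,r,p_i,q_i$ and the cutoffs.
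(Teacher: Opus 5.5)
Your argument is correct, but it follows a genuinely different route from the paper.

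The paper first rescales to $N=1$ using $|\nabla|I_1\widetilde f_N(x)=N^{-1}(|\nabla|I_Nf)(x/N)$. It then splits each factor once, at frequency $1$, into $P_{\le1}$ and $P_{>1}$ parts. Mikhlin multiplier comparisons ($|\nabla|I_1$ against $\nabla$ at low frequencies, and against $\langle\nabla\rangle^s$ at high frequencies) reduce the low--low product to the ordinary Leibniz rule. The same comparisons reduce the mixed and high--high products to the inhomogeneous Kato--Ponce inequality of Grafakos--Oh. Both results are used as black boxes.

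You instead reprove a Kato--Ponce inequality directly for the symbol $\sigma_N(\xi)=|\xi|m_N(\xi)$. You use a paraproduct decomposition and Coifman--Meyer for the high--low pieces. For the high--high piece you use an output Littlewood--Paley decomposition with the gain $\sigma_N(J)/\sigma_N(K)\lesssim(J/K)^s$, together with square-function and Fefferman--Stein bounds.

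Your route is longer and in effect reproduces the proof of Kato--Ponce. In exchange, it is self-contained modulo standard harmonic-analysis tools and needs no rescaling. It also shows that the $N$-independence is structural: the estimate holds uniformly for any symbol in the dilation-invariant class $|\partial^\alpha\sigma(\zeta)|\lesssim|\zeta|^{-|\alpha|}\sigma(\zeta)$ that is doubling and has a positive lower growth index. The paper's proof is much shorter because it outsources exactly this work to the literature.

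Two small remarks:
\begin{itemize}
\item The paper only assumes $m_1$ is nonincreasing, which does not guarantee that $\sigma_1$ is increasing on $1\le|\xi|\le2$. This does no harm, since you only need $\sigma_1(\rho)\sim\min\{\rho,\rho^s\}$. That already gives the doubling property and the comparison $\sigma_1(a)/\sigma_1(b)\lesssim(a/b)^s$ for $a\le b$.
\item In the high--high term, the factor $\langle j\rangle$ is unnecessary. The converse Littlewood--Paley inequality for functions with Fourier support in the annuli $|\xi|\sim2^{-j}K$ gives the bound $2^{-js}$ directly.
\end{itemize}
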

\begin{proof}
For $\widetilde f_N(x)=f(x/N)$,
\[
 |\nabla|I_1\widetilde f_N(x)
 =N^{-1}(|\nabla|I_Nf)(x/N).
\]
Thus both sides of \eqref{eq:Iproduct} scale by $N^{3/r-1}$,
and it suffices to consider $N=1$. Write
$f_\ell=P_{\le1}f$, $f_h=P_{>1}f$, and similarly for $g$.
The Fourier multiplier theorem gives, for $1<p<\infty$,
\begin{align*}
 \|\nabla f_\ell\|_p+\|\langle\nabla\rangle^s f_h\|_p
 &\lesssim_{s,p}\||\nabla|I_1f\|_p,&
 \|\langle\nabla\rangle^s f_\ell\|_p&\lesssim_{s,p}\|f\|_p,\\
 \||\nabla|I_1w\|_p&\lesssim_{s,p}\|\nabla w\|_p,&
 \||\nabla|I_1w\|_p&\lesssim_{s,p}\|\langle\nabla\rangle^s w\|_p.
\end{align*}
Thus the ordinary product rule bounds the low-frequency product by
\[
 \||\nabla|I_1(f_\ell g_\ell)\|_r
 \lesssim\||\nabla|I_1f\|_{p_1}\|g\|_{q_1}
        +\|f\|_{p_2}\||\nabla|I_1g\|_{q_2}.
\]
For the mixed term, the inhomogeneous Kato--Ponce inequality
\cite[Theorem~1]{GO} gives
\begin{align*}
 \||\nabla|I_1(f_hg_\ell)\|_r
 &\lesssim\|\langle\nabla\rangle^s f_h\|_{p_1}\|g_\ell\|_{q_1}
       +\|f_h\|_{p_1}\|\langle\nabla\rangle^s g_\ell\|_{q_1}\\
 &\lesssim\||\nabla|I_1f\|_{p_1}\|g\|_{q_1}.
\end{align*}
The term $f_\ell g_h$ is treated symmetrically. Applying the same
Kato--Ponce inequality to $f_hg_h$ gives the two terms on the
right of \eqref{eq:Iproduct}. Summing proves the result.
\end{proof}

\subsection{Strichartz and bilinear estimates}\label{sec:prelim-linear}
Write $S(t)=e^{\ii t\Delta}$, so that
$\widehat{S(t)f}(\xi)=e^{-4\pi^2\ii t|\xi|^2}\widehat f(\xi)$.
A pair $(q,r)$ is Schr\"odinger-admissible if
\[
 2\le q\le\infty,\qquad 2\le r\le6,\qquad
 \frac2q+\frac3r=\frac{3}{2}.
\]
For any two admissible pairs $(q,r)$ and $(\widetilde q,\widetilde r)$,
the Strichartz estimates (see \cite{KT} and the references therein) give, on $J=[t_0,t_1]$,
\begin{align}
 \|S(t-t_0)f_0\|_{L^q_tL^r_x(J)}
 &\lesssim\|f_0\|_2,\label{eq:homogeneousStrichartz}\\
 \left\|\int_{t_0}^tS(t-\tau)G(\tau)\dd\tau\right\|_{L^q_tL^r_x(J)}
 &\lesssim\|G\|_{L^{{\widetilde{q}}'}_tL^{{\widetilde{r}}'}_x(J)},
 \label{eq:linearStrichartz}\\
 \left\|\int_J S(-t)G(t)\dd t\right\|_2
 &\lesssim\|G\|_{L^{{\widetilde{q}}'}_tL^{{\widetilde{r}}'}_x(J)}.
 \label{eq:dualStrichartz}
\end{align}
The last estimate is the dual form of \eqref{eq:homogeneousStrichartz}.
The constants are independent of $J$. Set
\[
 \|f\|_{S^0(J)}=\|f\|_{L^\infty_tL^2_x(J)}
                    +\|f\|_{L^2_tL^6_x(J)}.
\]
By interpolation, $\|f\|_{L^q_tL^r_x(J)}\lesssim\|f\|_{S^0(J)}$
for every admissible pair $(q,r)$.
For the free Schr\"odinger evolution, we use the bilinear estimate
\cite[Theorem~4.18]{KV13}:
\begin{equation}\label{eq:freebilinear}
 \|S(t)f_H S(t)g_K\|_{L^2_{t,x}(\R^{1+3})}
 \lesssim KH^{-\frac{1}{2}}\|f_H\|_2\|g_K\|_2,
 \qquad K\ll H.
\end{equation}
For the earlier two-dimensional estimate, see \cite{Bourgain98Refined}.
Since $(\ii\partial_t+\Delta)u_H=P_H(|u|^2u)$, we need the
corresponding estimate for inhomogeneous equations. We follow the
Christ--Kiselev argument in the proof of \cite[Lemma~2.5]{Visan07},
starting from \eqref{eq:freebilinear}. We include the proof for completeness.

\begin{lemma}\label{lem:forcedbilinear}
Let $f_H,g_K$ solve
\[
 (\ii\partial_t+\Delta)f_H=F_H,\qquad
 (\ii\partial_t+\Delta)g_K=G_K
\]
on $J=[t_0,t_1]$, with $L^2$ initial data and
$F_H,G_K\in L^{\frac{3}{2}}_tL^{\frac{18}{13}}_x(J)$.
Assume that $f_H,F_H$ have spatial Fourier support in
$\{H/2\le|\xi|\le2H\}$, and $g_K,G_K$ in
$\{K/2\le|\xi|\le2K\}$. If $K\ll H$, then
\begin{equation}\label{eq:forcedbilinear}
 \begin{split}
 \|f_Hg_K\|_{L^2_{t,x}(J)}
 \lesssim KH^{-\frac{1}{2}}
 &\bigl(\|f_H(t_0)\|_2+\|F_H\|_{L^{\frac{3}{2}}_tL^{\frac{18}{13}}_x(J)}\bigr)\\
 {}\times&\bigl(\|g_K(t_0)\|_2+\|G_K\|_{L^{\frac{3}{2}}_tL^{\frac{18}{13}}_x(J)}\bigr).
 \end{split}
\end{equation}
The same bound holds after any fixed spatial translation of either factor.
\end{lemma}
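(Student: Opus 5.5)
The plan is the standard Christ--Kiselev reduction from \eqref{eq:freebilinear}, as in the proof of \cite[Lemma~2.5]{Visan07}. Write Duhamel's formula for both factors:
\[
 f_H(t)=S(t-t_0)f_H(t_0)-\ii\int_{t_0}^tS(t-\tau)F_H(\tau)\dd\tau ,
\]
and similarly for $g_K$. Expanding the product $f_Hg_K$ gives four terms: free--free, free--forced, forced--free and forced--forced.

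\textbf{Free--free term.} After replacing $f_H(t_0)$ by $S(-t_0)f_H(t_0)$, which has the same $L^2$ norm and Fourier support, this term is exactly \eqref{eq:freebilinear}, restricted to $J$.

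\textbf{Untruncated forced terms.} First remove the time truncation $\tau<t$. Define
\[
 \Phi_H=\int_J S(-\tau)F_H(\tau)\dd\tau ,
\]
so that the untruncated Duhamel term is $S(t)\Phi_H$. The function $\Phi_H$ keeps the Fourier support of $F_H$. The dual Strichartz estimate \eqref{eq:dualStrichartz}, with the admissible pair $(\widetilde q,\widetilde r)=(3,\frac{18}{5})$, gives
\[
 \|\Phi_H\|_2\lesssim\|F_H\|_{L^{3/2}_tL^{18/13}_x(J)} ,
\]
since the dual exponents of $(3,\frac{18}{5})$ are $(\frac32,\frac{18}{13})$. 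Thus every product of untruncated terms is a product of two free solutions with $L^2$ data. It is controlled by \eqref{eq:freebilinear} with the data norms above.

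\textbf{Restoring the truncation (main step).} One cannot truncate inside $L^2_{t,x}$ directly, so I would argue as follows. Fix the second factor as a free solution $S(t)\phi_K$. For the high factor, view $F_H\mapsto(\text{untruncated Duhamel})\cdot S(t)\phi_K$ as an operator given by a kernel integral in $\tau$. It maps $L^{3/2}_tL^{18/13}_x\to L^2_tL^2_x$ with norm at most $CKH^{-1/2}\|\phi_K\|_2$. Since $\frac32<2$, the Christ--Kiselev lemma transfers the same bound to the truncated operator with $\tau<t$.

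\textbf{Both factors forced.} Apply the lemma once more, now in the second factor. Fix $F_H$ and consider the bilinear map
\[
 G_K\mapsto\Bigl(\int_{t_0}^tS(t-\tau)F_H(\tau)\dd\tau\Bigr)\cdot\int_J S(t-\sigma)G_K(\sigma)\dd\sigma .
\]
It is bounded $L^{3/2}_tL^{18/13}_x\to L^2_{t,x}$ by the previous step, applied with $\phi_K=\int_J S(-\sigma)G_K(\sigma)\dd\sigma$ and \eqref{eq:dualStrichartz}. Christ--Kiselev then allows truncation to $\sigma<t$. The main obstacle I expect is checking carefully that the operator has the required form at this stage. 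The Christ--Kiselev lemma needs a linear operator whose kernel in the time variables is integrated over $\sigma<t$. Here the already-truncated high factor depends on $t$ but not on $\sigma$. So for fixed $F_H$ the map is $G_K\mapsto\int K(t,\sigma)G_K(\sigma)\dd\sigma$ with operator-valued kernel
\[
 K(t,\sigma)=\Bigl(\int_{t_0}^tS(t-\tau)F_H(\tau)\dd\tau\Bigr)\,S(t-\sigma),
\]
and the lemma applies since $\frac32<2$.

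\textbf{Translations and conclusion.} Spatial translations commute with $S(t)$ and preserve all the norms above, so the bound persists after any fixed spatial translation of either factor. Collecting the four terms yields \eqref{eq:forcedbilinear}.
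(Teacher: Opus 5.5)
Your proposal is correct and is essentially the paper's argument: two applications of the Christ--Kiselev lemma (time exponents $\frac32<2$) on top of \eqref{eq:freebilinear} and the dual Strichartz bound for the pair $(3,\frac{18}{5})$. The paper applies the two truncations in the opposite order: it first truncates the low-frequency forcing against a free high factor, then fixes the full $g_K$ and truncates $F_H$. You do not write out the cross term (free high factor times truncated low Duhamel term). It follows from the same Christ--Kiselev step, or directly if your last step fixes the whole $f_H$ rather than only its Duhamel part.
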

\begin{proof}
Let $h_0\in L^2(\R^3)$ satisfy
$\supp\widehat h_0\subset\{H/2\le|\xi|\le2H\}$, and set
$h(t)=S(t-t_0)h_0$. Define
\[
 a_K=\int_J S(t_0-\tau)G_K(\tau)\dd\tau.
\]
The dual Strichartz estimate \eqref{eq:dualStrichartz}, with
$(\widetilde q,\widetilde r)=(3,\frac{18}{5})$, gives
\[
 \|a_K\|_2\lesssim\|G_K\|_{L^{\frac{3}{2}}_tL^{\frac{18}{13}}_x(J)}.
\]
Since $a_K$ has Fourier support in $\{K/2\le|\xi|\le2K\}$,
\eqref{eq:freebilinear} yields
\begin{align*}
 \left\|h(t)\int_J S(t-\tau)G_K(\tau)\dd\tau\right\|_{L^2_{t,x}(J)}
 &=\|S(t-t_0)h_0\,S(t-t_0)a_K\|_{L^2_{t,x}(J)}\\
 &\lesssim KH^{-\frac{1}{2}}\|h_0\|_2\|a_K\|_2\\
 &\lesssim KH^{-\frac{1}{2}}\|h_0\|_2
       \|G_K\|_{L^{\frac{3}{2}}_tL^{\frac{18}{13}}_x(J)}.
\end{align*}
For fixed $h_0$, the Banach-valued Christ--Kiselev lemma
\cite[Theorem~1.2]{ChristKiselev}, with time exponents $\frac{3}{2}<2$,
restricts the time integral from $J$ to $[t_0,t]$.
Adding $S(t-t_0)g_K(t_0)$ gives
\begin{equation}\label{eq:freeforcedbilinear}
 \|S(t-t_0)h_0\,g_K\|_{L^2_{t,x}(J)}
 \lesssim KH^{-\frac{1}{2}}\|h_0\|_2
 \bigl(\|g_K(t_0)\|_2+\|G_K\|_{L^{\frac{3}{2}}_tL^{\frac{18}{13}}_x(J)}\bigr).
\end{equation}
Now fix $g_K$. Apply \eqref{eq:freeforcedbilinear} with
$h_0=\int_J S(t_0-\tau)F_H(\tau)\dd\tau$ and use
\eqref{eq:dualStrichartz} to obtain
\begin{align*}
 &\left\|g_K(t)\int_J S(t-\tau)F_H(\tau)\dd\tau\right\|_{L^2_{t,x}(J)}\\
 &\qquad\lesssim KH^{-\frac{1}{2}}\|F_H\|_{L^{\frac{3}{2}}_tL^{\frac{18}{13}}_x(J)}
 \bigl(\|g_K(t_0)\|_2+\|G_K\|_{L^{\frac{3}{2}}_tL^{\frac{18}{13}}_x(J)}\bigr).
\end{align*}
A second application of the Christ--Kiselev lemma, again with
$\frac{3}{2}<2$, restricts this integral to $[t_0,t]$. Duhamel's formula and
\eqref{eq:freeforcedbilinear} prove \eqref{eq:forcedbilinear}.
Spatial translations preserve the norms and commute with $S(t)$.
\end{proof}

At a single frequency, Bernstein's inequality gives
\begin{equation}\label{eq:singleL4}
 \|f_H\|_{L^4_{t,x}(J)}
 \lesssim H^{\frac{1}{4}}\|f_H\|_{L^4_tL^3_x(J)}
 \lesssim H^{\frac{1}{4}}\|f_H\|_{S^0(J)}.
\end{equation}
\subsection{Preliminary bounds for the solution}\label{sec:prelim-bounds}
We work first with a Schwartz solution on a finite interval $J=[t_0,t_1]$.
Fix a sufficiently large dyadic $N$ and assume
\begin{equation}\label{eq:boot}
 \sup_{t\in J}E(I_Nu(t))\le1,\qquad
 \mu\ge\max\{1,\|u(t_0)\|_2\}.
\end{equation}
Write $F(u)=|u|^2u$.
By \cite[Theorem~3.2]{Dodson13}, we have:

\begin{lemma}\label{lem:local}
There exist $\epsilon_s>0$, $N_s\ge1$, and $C_s>0$, depending only
on $s$ and the fixed cutoff functions, with the following property.
If $J'=[t_*,t^*]\subset J$, $N\ge N_s$, $E(I_Nu(t_*))\le1$, and
$\|u\|_{L^4_{t,x}(J')}\le\epsilon_s$, then
\begin{equation}\label{eq:localI}
 \|\nabla I_Nu\|_{S^0(J')}\le C_s.
\end{equation}
\end{lemma}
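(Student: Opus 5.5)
We prove the estimate by a bootstrap (continuity) argument on $J'$. We apply the Strichartz estimates \eqref{eq:homogeneousStrichartz}--\eqref{eq:linearStrichartz} to the equation satisfied by $I_Nu$, and use the $I$-product rule (Lemma~\ref{lem:Iproduct}) to control the nonlinearity. Set $X(t)=\|\nabla I_Nu\|_{S^0([t_*,t])}$. Since $u$ is Schwartz, $X$ is continuous and nondecreasing, and $X(t_*)=\|\nabla I_Nu(t_*)\|_2\le\sqrt2$ by $E(I_Nu(t_*))\le1$.

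\textbf{Duhamel and Strichartz.} Applying $\nabla I_N$ to the Duhamel formula and using \eqref{eq:linearStrichartz} with the dual pair $(\widetilde q,\widetilde r)=(2,6)$ gives
\[
 X(t)\lesssim\|\nabla I_Nu(t_*)\|_2+\|\nabla I_N(|u|^2u)\|_{L^2_tL^{6/5}_x([t_*,t])}.
\]
We apply Lemma~\ref{lem:Iproduct} twice (once with $f=u$, $g=|u|^2$, then to $|u|^2=u\bar u$). All exponents stay strictly between $1$ and $\infty$, and $|\nabla|$ and $\nabla$ are interchangeable in $L^p$ for $1<p<\infty$. This yields
\[
 \|\nabla I_N(|u|^2u)\|_{L^{6/5}_x}\lesssim\|\nabla I_Nu\|_{L^{p}_x}\|u\|_{L^{q}_x}^2
\]
for a suitable split. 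The natural split is $L^{6/5}=L^{10/3}\cdot L^5\cdot L^5$ in space. In time, $L^2=L^{10/3}\cdot L^{10}\cdot L^{10}$, or one uses a mixture with $L^4_{t,x}$. This gives a bound $\|\nabla I_Nu\|_{L^{10/3}_{t,x}}\|u\|_{L^{10}_tL^5_x}^2$ or, more usefully, an interpolated form that involves the small $L^4_{t,x}$ norm.

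\textbf{Controlling $u$ by $\nabla I_Nu$ and $L^4_{t,x}$.} Split $u=P_{\le N}u+P_{>N}u$. The low part has $\nabla u_{\le N}=\nabla I_Nu_{\le N}$ up to a bounded multiplier. By Sobolev embedding and interpolation with $\|u\|_{L^4_{t,x}}\le\epsilon_s$, we get $\|u_{\le N}\|_{L^{q}_tL^{r}_x}\lesssim \epsilon_s^{\theta}X^{1-\theta}$ for an appropriate Lebesgue pair. The high part satisfies $\|P_{>N}u\|\lesssim N^{-s}\cdot\|\nabla I_Nu\|$ at Sobolev-shifted exponents, using \eqref{eq:dyadicIestimate}: $\|P_Hu\|_p\lesssim (Hm_H)^{-1}\|\nabla I_Nu\|_p$, and $\sum_{H>N}(Hm_H)^{-1}\lesssim N^{-1}$ since $s>\frac12>0$. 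These combine into
\[
 X(t)\le C\sqrt2+C\bigl(\epsilon_s^{\theta}+N^{-\beta}\bigr)X(t)^{3-\theta'}
\]
for some $\theta,\beta>0$, and $N\ge N_s$ makes $N^{-\beta}$ small. The continuity argument then closes at $X\le 2C\sqrt2=:C_s$ once $\epsilon_s$ is small and $N_s$ is large.

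\textbf{Main obstacle.} The delicate step is choosing exponents so that the cubic term contains a positive power of the \emph{small} $L^4_{t,x}$ norm. Pure Strichartz-in-$\dot H^1$ bounds give only $X^3$ without smallness, and $u$ itself is controlled only at regularity $\dot H^1$ for low frequencies (the mass $\|u\|_2$ is large, of size $\mu$, and must not appear). I would interpolate $\|u_{\le N}\|_{L^{q}_tL^r_x}$ between $L^4_{t,x}$ and $\dot W^{1,\cdot}$ Strichartz norms of $I_Nu$, for instance $L^5_{t,x}$ or $L^{10}_tL^{5}_x$ as intermediate norms at $\dot H^{1}$-level scaling. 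The high-frequency part is then treated perturbatively via the $N^{-\beta}$ gain. This is exactly the mechanism of \cite[Theorem~3.2]{Dodson13}, which I would follow; the remaining checks are routine exponent bookkeeping.
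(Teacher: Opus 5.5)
Your plan is the argument of \cite[Theorem~3.2]{Dodson13}, which the paper simply cites: a continuity argument for $\|\nabla I_Nu\|_{S^0([t_*,t])}$ via Strichartz and Lemma~\ref{lem:Iproduct}, interpolating $u_{\le N}$ against the small $L^4_{t,x}$ norm and gaining a negative power of $N$ on $u_{>N}$. Its key step reappears in the proof of Lemma~\ref{lem:coarse}: $\|\nabla I_NF(u)\|_{L^{3/2}_tL^{18/13}_x}\lesssim\|\nabla I_Nu\|_{L^3_tL^{18/5}_x}\|u\|_{L^6_tL^{9/2}_x}^2$, with $\|u\|_{L^6_tL^{9/2}_x}\lesssim(\epsilon_s^{2/3}+N^{-1/2})(1+\|\nabla I_Nu\|_{S^0})$ by \cite[Lemma~3.1]{Dodson13}.

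The concrete exponents you wrote down do not work as stated and need to be corrected:
\begin{itemize}
\item The product $L^{10/3}\cdot L^5\cdot L^5$ is $L^{10/7}$, which is dual to the pair $(\frac{10}{3},\frac{10}{3})$, not $L^2_tL^{6/5}_x$.
\item With $\nabla I_Nu$ in an admissible pair and the forcing in a dual admissible pair, the $u$-factors must sit at $\dot H^{1/2}$ scaling, $\frac2q+\frac3r=1$. To reach them by interpolating $L^4_{t,x}$ (weight $\frac23$) with $\dot H^1$-level norms, you also need $4<q\le6$. So $L^5_{t,x}$ (via $L^4_{t,x}$ and $L^{10}_{t,x}$) or $L^6_tL^{9/2}_x$ (via $L^4_{t,x}$ and $L^\infty_tL^6_x$) work, while $L^{10}_tL^5_x$ does not.
\item The high-frequency gain is $N^{-1/2}$, not $N^{-s}$. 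It comes from $\sum_{H>N}H^{1/2}(Hm_H)^{-1}\lesssim N^{-1/2}$, and this sum converges only for $s>\frac12$, not merely $s>0$.
\end{itemize}
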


For the frequency-localized bounds below, set $b_H=(Hm_H)^{-1}$.

\begin{lemma}\label{lem:coarse}
Under \eqref{eq:boot}, with $N\ge N_s$,
\begin{align}
 \|u_H\|_{L^\infty_tL^2_x}&\lesssim_sb_H,&
 \|u_H\|_{L^\infty_{t,x}}&\lesssim_s H^{\frac{1}{2}}m_H^{-1},
       \label{eq:massfreq}\\
 \|u_{\le N}\|_{L^\infty_t\dot H^{\frac{1}{2}}_x}^2&\lesssim_s\mu,&
 \|u_{>N}\|_{L^\infty_t\dot H^{\frac{1}{2}}_x}&\lesssim_sN^{-\frac{1}{2}},
       \label{eq:criticalrough}\\
 \|u\|_{L^4_{t,x}(J)}^4&\lesssim_s1+\mu^3.\label{eq:Morawetzrough}
\end{align}
Moreover, one may choose $A=C_s(1+\mu^3)\ge2$ so that
\begin{equation}\label{eq:roughA}
 \|\nabla I_Nu\|_{S^0(J)}
 +\|\nabla I_NF(u)\|_{L^{\frac{3}{2}}_tL^{\frac{18}{13}}_x(J)}\le A.
\end{equation}
The same $A$ applies on every subinterval of $J$.
\end{lemma}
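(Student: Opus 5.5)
The plan is to read off the static bounds directly from the energy bound, then get the $L^4_{t,x}$ bound from interaction Morawetz, and finally feed it into Lemma~\ref{lem:local} on a partition of $J$.

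\textbf{Static bounds.} By \eqref{eq:dyadicIestimate},
\[
 u_H=\sum_jK_{H,j}*\partial_jI_Nu,
\]
so Young's inequality and $\|\nabla I_Nu\|_2\le\sqrt2$ give $\|u_H\|_{L^\infty_tL^2_x}\lesssim_sb_H$. Bernstein then gives $\|u_H\|_\infty\lesssim H^{3/2}b_H=H^{1/2}m_H^{-1}$. For \eqref{eq:criticalrough}, I would estimate each piece separately. For the low part, interpolate: $\|u_{\le N}\|_{\dot H^{1/2}}^2\le\|u_{\le N}\|_2\|\nabla u_{\le N}\|_2$. Mass conservation bounds the first factor by $\mu$. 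For the second, $m_N=1$ on $|\xi|\le N$ and the cutoff $\chi(\cdot/N)$ is supported in $|\xi|\le2N$, where $m_N\sim1$; hence $\|\nabla u_{\le N}\|_2\lesssim\|\nabla I_Nu\|_2\lesssim1$. For the high part, sum dyadically: $\|u_{>N}\|_{\dot H^{1/2}}^2\lesssim\sum_{H\gtrsim N}H\,b_H^2=\sum_{H\gtrsim N}H^{-1}m_H^{-2}$. Since $m_H^{-2}=(H/N)^{2-2s}$, each term is $N^{-1}(H/N)^{1-2s}$. This is a geometric series because $s>\frac12$, so the sum is $\lesssim_sN^{-1}$, which is \eqref{eq:criticalrough}.

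\textbf{Morawetz bound.} Next apply the interaction Morawetz estimate \eqref{eq:introInteractionMorawetz} on $J$ (Schwartz solution, mass conserved). The static bounds give $\sup_J\|u\|_{\dot H^{1/2}}^2\lesssim\mu+N^{-1}\lesssim\mu$. Therefore $\|u\|_{L^4_{t,x}}^4\lesssim\mu^2\cdot\mu=\mu^3$, which gives \eqref{eq:Morawetzrough}.

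\textbf{Proof of \eqref{eq:roughA}.} Partition $J$ into $n\lesssim_s1+\mu^3$ consecutive intervals $J_k$ with $\|u\|_{L^4_{t,x}(J_k)}\le\epsilon_s$; the count follows from \eqref{eq:Morawetzrough}. Since $E(I_Nu)\le1$ at every time, Lemma~\ref{lem:local} applies on each $J_k$ and gives $\|\nabla I_Nu\|_{S^0(J_k)}\le C_s$.
\begin{itemize}
\item The $L^\infty_tL^2_x$ part holds on all of $J$ directly.
\item For the $L^2_tL^6_x$ part, square-sum over the pieces: $\|\nabla I_Nu\|_{L^2_tL^6_x(J)}^2\le nC_s^2$. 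This gives a bound $\lesssim(1+\mu^3)^{1/2}$, which is well within $A$.
\item For the nonlinear term, use Lemma~\ref{lem:Iproduct} twice on each $J_k$:
\[
 \|\nabla I_NF(u)\|_{L^{3/2}_tL^{18/13}_x(J_k)}\lesssim\|\nabla I_Nu\|_{L^{q}_tL^{r}_x}\|u\|_{X}^2 .
\]
Here $(q,r)$ is admissible and $X$ is a Strichartz-type norm of $u$ controlled by $\|\nabla I_Nu\|_{S^0}$ plus mass via Sobolev and the low/high splitting. One natural choice is $\nabla I_Nu\in L^{3}_tL^{18/7}_x$ and $u\in L^{6}_tL^{18}_x$, which is Sobolev-embedded from $\nabla u\in L^6_tL^{18/7}$. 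The low/high issue is handled as in Dodson's local theory \cite[Theorem~3.2]{Dodson13}, whose proof already yields $\|\nabla I_NF(u)\|_{L^{3/2}L^{18/13}(J_k)}\lesssim C_s$.
\end{itemize}
Summing over the $n$ pieces in $L^{3/2}_t$ gives $\lesssim n^{2/3}\lesssim1+\mu^3$. Taking $A=C_s(1+\mu^3)$ with $C_s$ large gives $A\ge2$ and \eqref{eq:roughA}. Every subinterval of $J$ satisfies \eqref{eq:boot} with the same $\mu$, since mass is conserved, so the same $A$ works there.

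\textbf{Main obstacle.} I expect the hardest step to be the nonlinear bound on $\nabla I_NF(u)$ on each small piece. The difficulty is keeping the constant free of $\mu$: mass enters only through the low frequencies, where we must use $\nabla u$ rather than $\|u\|_2$. I would rely on the proof of Lemma~\ref{lem:local}, quoting its bootstrap bound for the nonlinearity, rather than redo it.
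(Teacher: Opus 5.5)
Your outline follows the paper's proof step for step. It uses the dyadic kernel bound plus Bernstein, interpolation below $N$, and decay of $|\xi|^{-1/2}m_N(\xi)^{-1}$ above $N$. It then uses interaction Morawetz, a partition into $n\lesssim_s1+\mu^3$ pieces, Lemma~\ref{lem:local} on each piece, and summation giving $n^{1/2}$ and $n^{2/3}$. The one step that fails as written is the nonlinear bound on each $J_k$, which you flagged yourself.

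Your suggested exponents are inconsistent. The pair $(3,\frac{18}{7})$ is not admissible, since $\frac23+\frac76\ne\frac32$. H\"older with two factors in $L^6_tL^{18}_x$ lands in $L^{3/2}_tL^2_x$, not $L^{3/2}_tL^{18/13}_x$. More fundamentally, $L^6_tL^{18}_x$ is an $\dot H^1$-level norm ($\frac26+\frac3{18}=\frac12$). Above frequency $N$ it is not controlled by $\nabla I_Nu$, which only controls $|\nabla|^su_{>N}$ with the factor $N^{s-1}$. On pieces $J_k$ of arbitrary length there is also no factor $|J_k|^\alpha$ to absorb the scaling mismatch. No low/high splitting rescues this choice.

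The choice that works, and the one the paper uses, puts the two undifferentiated factors in the $\dot H^{1/2}$-critical norm $L^6_tL^{9/2}_x$ (note $\frac26+\frac{3}{9/2}=1$). Lemma~\ref{lem:Iproduct} applied twice, Riesz transforms, and H\"older give
\[
 \|\nabla I_NF(u)\|_{L^{3/2}_tL^{18/13}_x(J_k)}
 \lesssim_s\|\nabla I_Nu\|_{L^3_tL^{18/5}_x(J_k)}\|u\|_{L^6_tL^{9/2}_x(J_k)}^2 .
\]
The paper then quotes Dodson's Lemma~3.1:
\[
 \|u\|_{L^6_tL^{9/2}_x(J_k)}\lesssim_s(\epsilon_s^{2/3}+N^{-1/2})\bigl(1+\|\nabla I_Nu\|_{S^0(J_k)}\bigr).
\]
This follows from two facts. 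At low frequencies,
\[
 \|u_{\le N}\|_{L^6_tL^{9/2}_x}\le\|u_{\le N}\|_{L^4_{t,x}}^{2/3}\|u_{\le N}\|_{L^\infty_tL^6_x}^{1/3},
\]
where the $L^\infty_tL^6_x$ factor is bounded via Sobolev by $\|\nabla I_Nu\|_{L^\infty_tL^2_x}$, with no mass. At high frequencies,
\[
 \|u_{>N}\|_{L^6_tL^{9/2}_x}\lesssim\||\nabla|^{1/2}u_{>N}\|_{L^6_tL^{18/7}_x}\lesssim_sN^{-1/2}\|\nabla I_Nu\|_{S^0}.
\]
This is exactly the mechanism that keeps $\mu$ out of the per-piece constant, which you correctly identified as the danger. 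With it in place, the rest of your argument, including the $n^{2/3}$ summation and the choice $A=C_s(1+\mu^3)$, goes through unchanged.
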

\begin{proof}
The first two bounds follow from \eqref{eq:dyadicIestimate}, the
energy bound, and Bernstein's inequality. Interpolation below $N$
and the definition of $m_N$ above $N$ give
\[
 \|u_{\le N}\|_{\dot H^{\frac{1}{2}}}^2
 \lesssim_s\|u\|_2\|\nabla I_Nu\|_2\lesssim_s\mu,
 \qquad
 \sup_{|\xi|\gtrsim N}\frac{|\xi|^{\frac{1}{2}}}{|\xi|m_N(\xi)}
 \lesssim_sN^{-\frac{1}{2}}.
\]
The interaction Morawetz estimate \eqref{eq:introInteractionMorawetz} yields
\[
 \|u\|_{L^4_{t,x}(J)}^4
 \lesssim\|u\|_{L^\infty_tL^2_x}^2
             \|u\|_{L^\infty_t\dot H^{\frac{1}{2}}_x}^2
 \lesssim_s\mu^2(\mu+N^{-1})\lesssim_s1+\mu^3.
\]
Partition $J$ into $n\lesssim_s1+\mu^3$ intervals $J_\nu$ with
$\|u\|_{L^4_{t,x}(J_\nu)}\le\epsilon_s$.
Lemma~\ref{lem:local} gives
$\|\nabla I_Nu\|_{S^0(J_\nu)}\lesssim_s1$.
By \cite[Lemma~3.1]{Dodson13},
\[
 Y_\nu:=\|u\|_{L^6_tL^{\frac{9}{2}}_x(J_\nu)}
 \lesssim_s(\epsilon_s^{\frac{2}{3}}+N^{-\frac{1}{2}})
       (1+\|\nabla I_Nu\|_{S^0(J_\nu)})
 \lesssim_s\epsilon_s^{\frac{2}{3}}+N^{-\frac{1}{2}}.
\]
Lemma~\ref{lem:Iproduct}, the Riesz transform bounds, and H\"older's
inequality give
\begin{equation}\label{eq:localcubic}
 \begin{split}
 \|\nabla I_NF(u)\|_{L^{\frac{3}{2}}_tL^{\frac{18}{13}}_x(J_\nu)}
 &\lesssim_s\|\nabla I_Nu\|_{L^3_tL^{\frac{18}{5}}_x(J_\nu)}Y_\nu^2\\
 &\lesssim_s\epsilon_s^{\frac{4}{3}}+N^{-1}\lesssim_s1.
 \end{split}
\end{equation}
Summing over the subintervals gives
\[
 \|\nabla I_Nu\|_{S^0(J)}\lesssim_s1+n^{\frac{1}{2}},\qquad
 \|\nabla I_NF(u)\|_{L^{\frac{3}{2}}_tL^{\frac{18}{13}}_x(J)}\lesssim_s n^{\frac{2}{3}}.
\]
Increasing $C_s$ proves \eqref{eq:roughA}.
Both norms decrease on subintervals of $J$.
\end{proof}

\subsection{Bilinear bounds and dyadic summation}\label{sec:prelim-bilinear}
The energy identity in Section~\ref{sec:energy} contains products of
four and six frequency-localized components. We pair each of the two
highest-frequency factors with a lower-frequency factor. For $K\ll H$,
\eqref{eq:freebilinear} and the $L^2$ bounds in \eqref{eq:massfreq}
suggest the factor $(H^{\frac{3}{2}}m_Hm_K)^{-1}$ for each product.
For $K\sim H$, \eqref{eq:singleL4} gives the same frequency powers.
For $u$, Lemma~\ref{lem:forcedbilinear} and \eqref{eq:singleL4},
together with \eqref{eq:roughA}, give these bounds with a factor $A^2$;
see Lemma~\ref{lem:pair}.
The supremum over relative translations accounts for the convolution
kernels of the energy multipliers.

For $H>0$ set
\begin{equation}\label{eq:log}
 \ell_H=1+\log(2+A\mu H^2).
\end{equation}
The factor $\ell_H$ accounts for the logarithms in the low-frequency
summation of Lemma~\ref{lem:sums} and the transverse trace estimate of
Lemma~\ref{lem:trace}, with the parameters chosen in
Section~\ref{sec:recursion}. For $R\ge1$, we write the constant
introduced in \eqref{eq:introBdef} as
\begin{equation}\label{eq:Bdef}
 \BB(R)=\max\left\{1,
 \sup_{\substack{H\ge R,\ K\le H\\x_0\in\R^3}}
 \frac{H^3m_H^2m_K^2}{\ell_H}
 \norm{u_H(t,x+x_0)u_K(t,x)}_{L^2_{t,x}(J)}^2\right\}.
\end{equation}
The solution $u$, the interval $J$, and the parameters $N,A,\mu$ are
fixed in this notation. The supremum is over dyadic frequencies and
time-independent translations. Thus $\BB$ is nonincreasing in $R$. The supremum allows
arbitrarily small $K$ and arbitrarily large $H$.

\begin{lemma}\label{lem:pair}
Under \eqref{eq:boot} and \eqref{eq:roughA},
\begin{equation}\label{eq:Brough}
 \BB(R)\lesssim_s A^4.
\end{equation}
For $H\ge R\ge1$ and $K\le H$, uniformly in $x_0$,
\begin{equation}\label{eq:pair}
 \norm{u_H(t,x+x_0)u_K(t,x)}_{L^2_{t,x}}
 \lesssim_s
 \frac{\BB(R)^{\frac{1}{2}}\ell_H^{\frac{1}{2}}}{H^{\frac{3}{2}}m_Hm_K}
 \theta_{H,K},
 \qquad
 \theta_{H,K}=\min\{1,C_sA\mu\sqrt{HK}\}.
\end{equation}
The constant $C_s$ in $\theta_{H,K}$ can be fixed sufficiently large once and for all.
\end{lemma}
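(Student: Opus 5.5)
The plan is to prove \eqref{eq:Brough} by splitting into the separated regime $K\ll H$ and the comparable regime $K\gtrsim H$, and then to obtain the extra factor $\theta_{H,K}$ in \eqref{eq:pair} from a direct H\"older estimate that exploits mass conservation. Throughout, translations are harmless: every norm used is invariant under a fixed spatial translation, and Lemma~\ref{lem:forcedbilinear} explicitly allows them.

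\textbf{Step 1: dyadic bounds from \eqref{eq:roughA}.} By \eqref{eq:dyadicIestimate}, $u_H=\sum_jK_{H,j}*\partial_jI_Nu$ and $P_HF(u)=\sum_jK_{H,j}*\partial_jI_NF(u)$. Young's inequality, with $\|K_{H,j}\|_1\lesssim_s b_H$, gives
\[
\|u_H\|_{S^0(J)}+\|P_HF(u)\|_{L^{\frac32}_tL^{\frac{18}{13}}_x(J)}\lesssim_s b_HA .
\]
To apply Lemma~\ref{lem:forcedbilinear} literally, I would use a fattened projection $\widetilde P_H$ whose symbol equals one on $\supp p_H$. The equation $(\ii\partial_t+\Delta)u_H=P_HF(u)$ already has its forcing supported in the correct annulus, so this is only a formality.

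\textbf{Step 2: the bound $\BB(R)\lesssim_sA^4$.} Suppose first that $K\le c_0H$, where $c_0$ is the smallness constant implicit in Lemma~\ref{lem:forcedbilinear}. That lemma and Step~1 give
\[
\|u_H(\cdot+x_0)u_K\|_{L^2_{t,x}}\lesssim KH^{-\frac12}A^2b_Hb_K=\frac{A^2}{H^{\frac32}m_Hm_K}.
\]
Suppose instead that $c_0H<K\le H$. Then H\"older's inequality, \eqref{eq:singleL4} and Step~1 give
\[
\|u_H(\cdot+x_0)u_K\|_{L^2_{t,x}}\le\|u_H\|_{L^4_{t,x}}\|u_K\|_{L^4_{t,x}}\lesssim H^{\frac14}K^{\frac14}A^2b_Hb_K\sim\frac{A^2}{H^{\frac32}m_Hm_K}.
\]
In either case, squaring and multiplying by $H^3m_H^2m_K^2/\ell_H\le H^3m_H^2m_K^2$ bounds the quotient in \eqref{eq:Bdef} by $C_sA^4$. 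This holds uniformly in $H,K,x_0$, and hence proves \eqref{eq:Brough}.

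\textbf{Step 3: the gain $\theta_{H,K}$.} When the minimum in $\theta_{H,K}$ equals $1$, \eqref{eq:pair} is just the definition \eqref{eq:Bdef} together with $H\ge R$. For the factor $A\mu\sqrt{HK}$, I would use H\"older's inequality in the form $L^2_tL^6_x\times L^\infty_tL^3_x$:
\[
\|u_H(\cdot+x_0)u_K\|_{L^2_{t,x}}\le\|u_H\|_{L^2_tL^6_x}\|u_K\|_{L^\infty_tL^3_x}\lesssim_s b_HA\cdot K^{\frac12}\|u\|_{L^\infty_tL^2_x}.
\]
Here the second inequality uses Step~1 for $u_H$, and Bernstein's inequality for $u_K$. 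Mass conservation gives $\|u\|_{L^\infty_tL^2_x}=\|u(t_0)\|_2\le\mu$, so the right-hand side is $\lesssim_s A\mu K^{\frac12}(Hm_H)^{-1}$. We can rewrite this as
\[
A\mu\sqrt{HK}\cdot\frac{m_K}{H^{\frac32}m_Hm_K}\le A\mu\sqrt{HK}\cdot\frac{\BB(R)^{\frac12}\ell_H^{\frac12}}{H^{\frac32}m_Hm_K},
\]
using $m_K\le1$, $\BB\ge1$ and $\ell_H\ge1$. Taking the smaller of the two bounds, with $C_s$ chosen as the constant from this step, gives \eqref{eq:pair}.

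The only point requiring care is the correct use of Lemma~\ref{lem:forcedbilinear}: its hypotheses on Fourier support and on the dual Strichartz norms must be verified through Step~1. Everything else is bookkeeping, so I do not expect a genuine obstacle.
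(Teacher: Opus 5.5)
Your proposal is correct and follows essentially the same route as the paper: Lemma~\ref{lem:forcedbilinear} for $K\ll H$ and H\"older with \eqref{eq:singleL4} for $K\sim H$ give $\BB(R)\lesssim_sA^4$, and the $L^2_tL^6_x\times L^\infty_tL^3_x$ bound with Bernstein and mass conservation (the paper's \eqref{eq:tiny}) supplies the factor $\theta_{H,K}$. The only cosmetic difference is that you apply this last bound for all $K$ using $m_K\le1$, whereas the paper uses it only for $K<1$ (where $m_K=1$) and notes that $\theta_{H,K}=1$ when $K\ge1$.
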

\begin{proof}
Write $F_H=P_H(|u|^2u)$. Applying \eqref{eq:dyadicIestimate}
to \eqref{eq:roughA} gives
\[
 \|u_H\|_{S^0(J)}+\|u_H(t_0)\|_2+\|F_H\|_{L^{\frac{3}{2}}_tL^{\frac{18}{13}}_x(J)}
 \lesssim_s Ab_H.
\]
For $K\ll H$, Lemma~\ref{lem:forcedbilinear} yields
\[
 \|u_H(\cdot+x_0)u_K\|_{L^2_{t,x}}
 \lesssim_s KH^{-\frac{1}{2}}(Ab_H)(Ab_K)
 =\frac{A^2}{H^{\frac{3}{2}}m_Hm_K}.
\]
For $K\sim H$, the same bound follows from \eqref{eq:singleL4}
and H\"older's inequality. Consequently $\BB(R)\lesssim_s A^4$.
For $K<1$, mass conservation and Bernstein's inequality give
\begin{equation}\label{eq:tiny}
 \|u_H(\cdot+x_0)u_K\|_{L^2_{t,x}}^2
 \le\|u_H\|_{L^2_tL^6_x}^2\|u_K\|_{L^\infty_tL^3_x}^2
 \lesssim_s\frac{A^2\mu^2K}{H^2m_H^2}.
\end{equation}
Taking the minimum of this estimate and the defining bound for
$\BB(R)$, and using $m_K=1$ and $\BB(R)\ell_H\ge1$,
gives \eqref{eq:pair}. This additional gain makes the dyadic sum over $K<1$ convergent. When $K\ge1$, the factor
$\theta_{H,K}$ equals one, so the conclusion follows directly
from \eqref{eq:Bdef}.
\end{proof}

For $H'\sim H$ and $K'\sim K$, the definitions give
\begin{equation}\label{eq:thetastability}
 m_{H'}\sim_s m_H,\qquad \ell_{H'}\sim\ell_H,\qquad
 \theta_{H',K'}\sim\theta_{H,K}.
\end{equation}
The low-frequency sums in the energy estimate use the factor
$\theta_{H,K}$. We also record the geometric sums needed for the
nonlinear terms in Section~\ref{sec:recursion}. Set
\begin{equation}\label{eq:deltadef}
 \delta_H=\frac1{Hm_H^2}
 =\begin{cases}H^{-1},&H\le N,\\
 N^{2s-2}H^{1-2s},&H>N.
 \end{cases}
\end{equation}

\begin{lemma}\label{lem:sums}
Let $\frac{1}{2}<s<1$, $p\ge0$, and $c,C>0$ be fixed. For $H\ge1$,
\begin{align}
 \sum_{K\lesssim H}\frac{\theta_{H,K}}{m_K}
 &\lesssim_s\frac{\ell_H}{m_H},\label{eq:sumlow}\\
 \sum_{T\gtrsim L}\frac{\ell_T^3}{T^3m_T^4}
 &\lesssim_s\frac{\ell_L^3}{L^3m_L^4},\qquad L\gg1,\label{eq:sumfour}\\
 \sum_{T\gtrsim H}\frac{\ell_T^2}{T^3m_T^3}
 &\lesssim_s\frac{\ell_H^2}{H^3m_H^3},\label{eq:sumthree}\\
 \sup_{H\ge cR}\delta_H\ell_H^p
 &\lesssim_{s,p,c}\frac{\ell_N^p}{R},\qquad 1\ll R\le N.\label{eq:sumdelta}
\end{align}
Also, for every dyadic $L>0$,
\begin{equation}\label{eq:sumLinfty}
 \sum_{K\le CL}\frac{\sqrt K}{m_K}\lesssim_{s,C}\frac{\sqrt L}{m_L}.
\end{equation}
\end{lemma}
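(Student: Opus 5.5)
The plan is to treat each sum as a dyadic geometric series, splitting at the relevant thresholds ($K=1$, $K=N$, $T=N$). I will use the explicit forms $m_T=\min\{1,(N/T)^{1-s}\}$, $\ell_T=1+\log(2+A\mu T^2)$ and $\theta_{H,K}=\min\{1,C_sA\mu\sqrt{HK}\}$. Two elementary facts will be used repeatedly. First, $\ell_T$ grows only logarithmically: for $T\ge L\ge1$ we have $\ell_T\le \ell_L+2\log(T/L)$, hence $\ell_T\lesssim \ell_L(1+\log(T/L))$. Second, $1/m_T$ grows at most like $(T/N)^{1-s}$ with $1-s<\frac12$.

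\textbf{Proof of \eqref{eq:sumlow}.} I split the sum at $K=1$ and at $K=N$.

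For $K\ge1$ and $K\lesssim H$, we have $\theta\le1$. The sum of $1/m_K$ is dominated by $\#\{1\le K\le N\}\lesssim 1+\log N$ plus the geometric tail $\sum_{N<K\lesssim H}(K/N)^{1-s}\lesssim_s 1/m_H$. Since $\log N\le\log H$ when $N\le H$, and $\log H\lesssim\ell_H$ since $A\mu\ge1$, this part is $\lesssim_s \ell_H/m_H$. If $H<N$, the count is only $\log H$, which is again $\le\ell_H$.

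For $K<1$, we have $m_K=1$ and $\theta\le\min\{1,C_sA\mu\sqrt{HK}\}$. Terms with $K\ge (A\mu)^{-2}H^{-1}$ number at most $\log(2+A\mu^2H)\lesssim\ell_H$, using $\mu\le A$ so that $A\mu H^2\ge\mu^2H$ up to constants. Below that threshold the terms form a convergent geometric series in $\sqrt K$ bounded by $O(1)$. Since $m_H\le1$, this part is also $\lesssim\ell_H/m_H$.

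\textbf{Proofs of \eqref{eq:sumfour} and \eqref{eq:sumthree}.} For $T\le N$ the summand is $\ell_T^3T^{-3}$. For $T>N$ it is $\ell_T^3T^{-3}(T/N)^{4(1-s)}$, with exponent $-3+4(1-s)=1-4s<-1$ since $s>\frac12$. So in both regimes the summand decays like $T^{-\alpha}$ with $\alpha>1$ times $\ell_T^3$. Writing $T=2^jL$ and $\ell_T^3\lesssim\ell_L^3(1+j)^3$, the sum is bounded by the first term times $\sum_j(1+j)^3 2^{-(1+\epsilon_s)j}$. Here I use that the ratio $m_L^4/m_T^4$ is at most $(T/L)^{4(1-s)}$. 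The same argument works for \eqref{eq:sumthree} with exponent $-3+3(1-s)<0$.

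\textbf{Proof of \eqref{eq:sumdelta}.} I handle $H\le N$ and $H>N$ separately.

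For $cR\le H\le N$: $\delta_H\ell_H^p=\ell_H^p/H\lesssim\ell_N^p/(cR)$, since $\ell_H\le\ell_N$. This needs $H\le N$, so that $\ell$ is monotone.

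For $H>N$: $\delta_H\ell_H^p=N^{-1}(H/N)^{1-2s}\ell_H^p$. Using $\ell_H\lesssim\ell_N(1+\log(H/N))$, the function $x^{1-2s}(1+\log x)^p$ is bounded on $x\ge1$ by a constant depending on $s,p$. This gives $\lesssim\ell_N^p/N\le\ell_N^p/R$.

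\textbf{Proof of \eqref{eq:sumLinfty}.} For $L\le N$ this is a geometric series in $\sqrt K$. For $L>N$, the summand $\sqrt K/m_K=\sqrt K(K/N)^{1-s}$ is increasing geometrically with ratio $2^{3/2-s}>1$, so the sum is dominated by its largest term, $\sqrt{CL}/m_{CL}\sim\sqrt L/m_L$.

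\textbf{Main obstacle.} The only delicate point is controlling the number of dyadic scales in \eqref{eq:sumlow} by $\ell_H$ without losing a $\log N$ when $H<N$. This is handled by counting only $K\le H$, and by making sure the transition scale $(A\mu)^{-2}H^{-1}$ for $\theta$ contributes at most $\log(A\mu H^2)$ many terms.
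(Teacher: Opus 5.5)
Your proposal is correct and follows essentially the paper's route. You split \eqref{eq:sumlow} at the $\theta$-transition scale and at $N$, bound \eqref{eq:sumfour}, \eqref{eq:sumthree} and \eqref{eq:sumLinfty} as geometric series using $\ell_{2^jT}\le\ell_T+Cj$, and split \eqref{eq:sumdelta} at $H=N$.

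There is one small arithmetic slip. The number of dyadic $K$ between $(C_sA\mu)^{-2}H^{-1}$ and $1$ is about $\log(C_s^2A^2\mu^2H)$, not $\log(2+A\mu^2H)$. This is still at most $2\log(C_sA\mu H^2)\lesssim_s\ell_H$ for $H\ge1$, so the bound holds and $\mu\le A$ is not needed.
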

\begin{proof}
For \eqref{eq:sumlow}, split the sum at
$K_0=(C_s^2A^2\mu^2H)^{-1}$ and at $N$. Then
\[
 \sum_{K\le K_0}\theta_{H,K}\lesssim1,\qquad
 \#\{K_0<K\le\min(H,N)\}\lesssim\ell_H,\qquad
 \sum_{N<K\lesssim H}m_K^{-1}\lesssim_s m_H^{-1}.
\]
For the next two sums, the weights above $N$ are
\[
 T^{-3}m_T^{-4}=N^{4s-4}T^{1-4s},\qquad
 T^{-3}m_T^{-3}=N^{3s-3}T^{-3s},
\]
whereas both are $T^{-3}$ below $N$. Since
$\ell_{2^jT}\le\ell_T+Cj$, the geometric-series bound
\[
 \sum_{j\ge0}(\ell_L+Cj)^p2^{-\alpha j}
 \lesssim_{\alpha,p}\ell_L^p,\qquad \alpha>0,
\]
proves \eqref{eq:sumfour} and \eqref{eq:sumthree}.
For \eqref{eq:sumdelta}, use $\delta_H=H^{-1}$ below $N$ and
\begin{equation}\label{eq:deltahigh}
 \delta_{2^jN}=N^{-1}2^{-j(2s-1)},\qquad
 \sup_{j\ge0}2^{-j(2s-1)}(\ell_N+Cj)^p\lesssim_{s,p}\ell_N^p.
\end{equation}
Finally, $K^{\frac{1}{2}}m_K^{-1}$ equals $K^{\frac{1}{2}}$ below $N$ and
$N^{s-1}K^{\frac{3}{2}-s}$ above $N$; summing either increasing geometric
sequence proves \eqref{eq:sumLinfty}.
\end{proof}

\subsection{Multilinear estimates}\label{sec:prelim-multilinear}
For standard multilinear multiplier bounds, see Coifman and Meyer
\cite{CM78} and the formulation in \cite[Theorem~3.1]{CKSTT04}.
To apply the bilinear estimate \eqref{eq:pair} to the energy terms, we write the frequency-localized multipliers as convolutions. For fixed convolution variables, Cauchy--Schwarz bounds a four-factor integral
by the product of two bilinear $L^2_{t,x}$ norms. The factors in each product are generally translated by different vectors; this accounts for the supremum over relative translations in \eqref{eq:Bdef}. We record the resulting estimate below.

For time-independent finite complex measures,
\[
 \|\nu_1*\nu_2\|_{\TV}
 \le\|\nu_1\|_{\TV}\|\nu_2\|_{\TV}.
\]
If $T$ has an $L^1$ convolution kernel $K_T$, the kernel of $1-T$ is
the finite measure $\delta_0-K_T(x)\dd x$.

For time-independent finite measures $\nu_1,\nu_2$, Minkowski's
inequality gives
\begin{equation}\label{eq:convolutionpair}
 \begin{split}
 &\sup_y\|(\nu_1*u_H)(t,x+y)(\nu_2*u_K)(t,x)\|_{L^2_{t,x}}\\
 &\quad\le\|\nu_1\|_{\TV}\|\nu_2\|_{\TV}
       \sup_y\|u_H(t,x+y)u_K(t,x)\|_{L^2_{t,x}}.
 \end{split}
\end{equation}
By \eqref{eq:convolutionpair} and \eqref{eq:thetastability},
\eqref{eq:pair} remains valid with the fixed smooth frequency cutoffs
used below in place of the dyadic projections, using $\BB(cR)$ to
include neighboring dyadic frequencies. Here $0<c<1$ depends only on
these cutoffs and the fixed frequency comparisons, and we require $cR\ge1$.
The following estimate applies to four factors in the presence of a
bounded weight.

\begin{lemma}\label{lem:kernel}
Let $f_i,g_i$ be measurable functions on $J\times\R^3$ such that
\[
 B_i=\sup_y\|f_i(t,x+y)g_i(t,x)\|_{L^2_{t,x}(J)}<\infty,
 \qquad i=1,2.
\]
For a time-independent finite complex measure $\nu$ on $(\R^3)^4$
and $q\in L^\infty(J\times\R^3)$, set
\[
 \mathcal L(t)=\int_{(\R^3)^4}\!\int_{\R^3}
 q(t,x)\prod_{i=1}^2
 [f_i(t,x-y_{2i-1})g_i(t,x-y_{2i})]\dd x\dd\nu(\vec y).
\]
Then
\begin{equation}\label{eq:kernelpair}
 \int_J|\mathcal L(t)|\dd t
 \le\|q\|_{L^\infty_{t,x}}\|\nu\|_{\TV}B_1B_2.
\end{equation}
The same estimate holds with any of the four factors conjugated.
\end{lemma}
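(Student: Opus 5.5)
The plan is to bound $|\mathcal L(t)|$ pointwise in $t$ by moving the absolute value inside both integrals, then integrate in time and use Fubini–Tonelli to exchange the time integral with $\nu$. Concretely,
\[
 \int_J|\mathcal L(t)|\dd t
 \le\|q\|_{L^\infty_{t,x}}\int_{(\R^3)^4}\Phi(\vec y)\dd|\nu|(\vec y),
\]
where
\[
 \Phi(\vec y)=\int_J\!\int_{\R^3}\prod_{i=1}^2|f_i(t,x-y_{2i-1})g_i(t,x-y_{2i})|\dd x\dd t .
\]
This first step only needs the measurability of the integrand on $J\times\R^3\times(\R^3)^4$ and the finiteness of $|\nu|$. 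Nonnegativity then justifies Tonelli; finiteness of the resulting quantity will follow from the bound on $\Phi$ below, which also justifies the definition of $\mathcal L(t)$ for a.e.\ $t$.

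Next, for each fixed $\vec y$, I apply Cauchy--Schwarz in $(t,x)$ over $J\times\R^3$:
\[
 \Phi(\vec y)\le\prod_{i=1}^2\|f_i(t,x-y_{2i-1})g_i(t,x-y_{2i})\|_{L^2_{t,x}(J)}.
\]
In the $i$-th factor I change variables $x\mapsto x+y_{2i}$, which preserves Lebesgue measure and is time independent. This turns the factor into $\|f_i(t,x+(y_{2i}-y_{2i-1}))g_i(t,x)\|_{L^2_{t,x}(J)}\le B_i$. Hence $\Phi(\vec y)\le B_1B_2$ uniformly in $\vec y$. Integrating against $|\nu|$ gives the factor $|\nu|((\R^3)^4)=\|\nu\|_{\TV}$, which proves \eqref{eq:kernelpair}.

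For conjugated factors, $|\bar f|=|f|$, so both $\Phi$ and the Cauchy--Schwarz step are unchanged. There is no real obstacle here. The only points needing care are measure-theoretic: that the translations are independent of $t$, so the change of variables is legitimate inside the spacetime norm, and that Tonelli is applied to the absolute values before any cancellation is used. If $B_1B_2=\infty$, or some $f_i,g_i$ fail to be integrable in the needed sense, the inequality is trivial.
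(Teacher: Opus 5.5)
Your proposal is correct and follows the paper's proof: Tonelli to move the absolute value inside, Cauchy--Schwarz for each fixed $\vec y$, and the observation that the relative translation in the $i$th product is $y_{2i}-y_{2i-1}$, so each factor is bounded by $B_i$. Integrating against $|\nu|$ then gives $\|\nu\|_{\TV}$, and conjugation is harmless since only absolute values enter.
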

\begin{proof}
Tonelli's theorem and Cauchy--Schwarz give
\begin{align*}
 \int_J|\mathcal L(t)|\dd t
 &\le\|q\|_{L^\infty_{t,x}}\int_{(\R^3)^4}
 \prod_{i=1}^2
 \|f_i(t,x-y_{2i-1})g_i(t,x-y_{2i})\|_{L^2_{t,x}}
 \,\dd|\nu|(\vec y)\\
 &\le\|q\|_{L^\infty_{t,x}}\|\nu\|_{\TV}B_1B_2,
\end{align*}
since the relative translation in the $i$th product is
$y_{2i}-y_{2i-1}$.
\end{proof}

For bounded measurable functions $f_1,\ldots,f_6$ and
time-independent finite complex measures $K_0,\ldots,K_6$, Fubini gives
\begin{align}
 &\prod_{j=1}^3(K_j*f_j)(x)
 \left[K_0*\left(\prod_{j=4}^6(K_j*f_j)\right)\right](x)\notag\\
 &\quad=\int\prod_{j=1}^3f_j(x-y_j)
               \prod_{j=4}^6f_j(x-z-y_j)
           \,\dd K_0(z)\prod_{j=1}^6\dd K_j(y_j)\notag\\
 &\quad=\int\prod_{j=1}^6 f_j(x-\eta_j)\dd\nu(\vec\eta),
 \qquad \|\nu\|_{\TV}\le\prod_{j=0}^6\|K_j\|_{\TV}.
 \label{eq:producttreekernel}
\end{align}
The last line uses $\eta_j=y_j$ for $j\le3$ and
$\eta_j=z+y_j$ for $j\ge4$. Taking $K_j=\delta_0$ gives $K_j*f_j=f_j$. Conjugating a convolved
factor conjugates both the function and its kernel, without changing
the total variation. The same calculation applies to four factors,
with the common convolution acting on three of them. The measure
$\nu$ need not be a product measure.

\section{Almost conservation of the modified energy}\label{sec:energy}
Since $I_NF(u)\ne F(I_Nu)$ in general, the modified energy
\eqref{eq:modifiedenergy} is not conserved. We estimate its variation
in terms of $\BB$ from \eqref{eq:Bdef}, leaving the bound
$\BB(c_EN)\lesssim_s1$ to Section~\ref{sec:recursion}.

Fix an absolute constant $c_E$ and write
\begin{equation}\label{eq:energythreshold}
 0<c_E\le2^{-16},\qquad B_*=\BB(c_EN).
\end{equation}
Throughout this section, assume \eqref{eq:boot} and \eqref{eq:roughA},
with $N\ge\max\{N_s,c_E^{-1}\}$. In particular, $B_*<\infty$ by
Lemma~\ref{lem:pair}.

\begin{proposition}[Almost conservation]\label{prop:energy}
Under the above assumptions,
\begin{equation}\label{eq:energyincrement}
 \int_J\left|\frac{\dd}{\dd t}E(I_Nu(t))\right|\dd t
 \lesssim_sB_*\left(\frac{\ell_N^3}{N}+\frac{\ell_N}{N^2}\right).
\end{equation}
\end{proposition}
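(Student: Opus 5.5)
We first compute the time derivative of the modified energy. With $U=I_Nu$ and $(\ii\partial_t+\Delta)U=I_NF(u)$, a direct computation gives
\[
 \frac{\dd}{\dd t}E(U)=\mathcal E_4+\mathcal E_6,\qquad
 \mathcal E_4=\Imn\int\Delta\overline U\,[F(U)-I_NF(u)],\qquad
 \mathcal E_6=\Imn\int\overline{[F(U)-I_NF(u)]}\,F(U)\dd x.
\]
Up to sign conventions, the second term comes from pairing $\partial_tU$ against the $|U|^2U$ part of $\Delta U-F(U)$. Both terms are first justified for Schwartz solutions.

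\textbf{Quartic term.} We expand $\mathcal E_4$ dyadically in all four factors. On the Fourier side its symbol is
\[
 |\xi_1|^2\Bigl(1-\frac{m_N(\xi_2+\xi_3+\xi_4)}{m_N(\xi_2)m_N(\xi_3)m_N(\xi_4)}\Bigr)m_N(\xi_1)\prod_{j\ge2}m_N(\xi_j),
\]
with $\sum\xi_j=0$. The symbol vanishes unless some $|\xi_j|\gtrsim N$. Since frequencies sum to zero, the two largest frequencies are comparable, $H\sim H'\ge c_EN$, and the remaining two are $K_1\ge K_2$.

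We symmetrize and write each dyadic piece in the form of Lemma~\ref{lem:kernel}, with the convolution structure \eqref{eq:producttreekernel}. The factor $u_H$ is paired with $u_{K_1}$, and $u_{H'}$ with $u_{K_2}$; when the two lower frequencies are also comparable to $H$, we pair in whatever way is convenient. The key multiplier input is a bound on the total variation of the kernel of the frequency-localized symbol, rescaled to unit frequencies:
\begin{itemize}
\item $O_s(H^2m_H^2)$ in general, via \eqref{eq:smoothkernel} applied to a smooth bump times the symbol;
\item an extra factor $K_1/H$ when $K_1\ll N$, from the mean value theorem: the difference $m_N(\xi_2+\xi_3+\xi_4)-m_N(\xi_2)$ costs $K_1/H$ when $\xi_3,\xi_4$ are small.
\end{itemize}
Lemma~\ref{lem:kernel} together with \eqref{eq:pair} then bounds the piece by
\[
 H^2m_H^2\cdot\frac{B_*\ell_H\,\theta_{H,K_1}\theta_{H,K_2}}{H^3m_H^2m_{K_1}m_{K_2}}\min\Bigl\{1,\frac{K_1}{H}\Bigr\}.
\]

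We then sum. The sums over $K_1,K_2$ are handled by \eqref{eq:sumlow}, which contributes $\ell_H^2/m_H^2$. For the remaining $m_H^{-2}$ when $H>N$, we need care. When both lower frequencies exceed $N$, the symbol bound should include the factor $m_{K_1}m_{K_2}$ coming from $\prod m_N(\xi_j)$; I would exploit this so that $1/(m_{K_1}m_{K_2})$ is cancelled. The sum over $H\ge c_EN$ of $\ell_H^3/H$, up to powers of $m$ controlled by $2s-1>0$ decay as in \eqref{eq:sumdelta}, yields $B_*\ell_N^3/N$.

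\textbf{Sextic term.} We expand $\mathcal E_6$ dyadically as well. Again the two highest frequencies must be $\gtrsim N$. We use \eqref{eq:producttreekernel} to represent the nested product: the outer $I_N$ acts on three factors. We pair the two highest-frequency factors with two lower ones in $L^2_{t,x}$ via Lemma~\ref{lem:kernel}, and place the remaining two lowest factors in $L^\infty_{t,x}$ via \eqref{eq:massfreq}, summed by \eqref{eq:sumLinfty}.

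The multiplier gain (Lemma~\ref{lem:compressed}, or the analogous mean-value bound) gives a symbol of size $O(1)$ with the appropriate $m$ factors. The $L^\infty$ factors contribute $\sqrt{K}/m_K$. The weights combine to
\[
 \frac{B_*\ell_H}{H^3m_H^2}\cdot\frac{K_3^{1/2}K_4^{1/2}}{m_{K_3}m_{K_4}}\cdots,
\]
and summing with $K_3,K_4\lesssim H$ gives roughly $B_*\ell_H/H^2$ per high scale. Summing over $H\gtrsim N$ gives $B_*\ell_N/N^2$.

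\textbf{Main obstacle.} I expect the main difficulty to be the quartic bookkeeping in the regime $H>N$ with intermediate $K_1\sim N$. There one must simultaneously extract the mean-value gain $K_1/H$ and preserve the $m_{K}$ weights in the total-variation bound of the symbol, uniformly in $N$. Without both, the sums \eqref{eq:sumlow} and \eqref{eq:sumfour} leave residual powers of $m_H^{-1}$ or logarithms beyond $\ell_N^3$. I would first prove a single symbol lemma giving kernel total variation
\[
 \lesssim_s H^2m_H^2\,\frac{\min\{H,K_1\}}{H}\cdot\frac{\max\{m_{K_1}m_{K_2},\,\cdot\}}{\cdot}
\]
by rescaling and \eqref{eq:smoothkernel}, and only then carry out the sums.
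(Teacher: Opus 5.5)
Your skeleton is the paper's: the same energy identity (your $\mathcal E_6$ equals $\Imn\int\overline{F(U)}I_NF(u)$ because $\Imn\int|F(U)|^2=0$), the kernel bound $\lesssim_sH^2m_H^2$ of Lemma~\ref{lem:quartickernel}, the pairings $(H,K_1)$ and $(H',K_2)$ via Lemma~\ref{lem:kernel} and \eqref{eq:pair}, and, for the sextic term, Lemma~\ref{lem:compressed} with the two lowest factors in $L^\infty_{t,x}$. The step that fails is the one you single out as the main obstacle.

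\textbf{The quartic obstacle does not exist.} Using only the bound $H^2m_H^2$, each quartic piece is $\lesssim_sB_*\ell_H\theta_{H,K_1}\theta_{H,K_2}/(Hm_{K_1}m_{K_2})$. Two applications of \eqref{eq:sumlow} give $B_*\ell_H^3/(Hm_H^2)=B_*\ell_H^3\delta_H$. Since $Hm_H^2=N^{2-2s}H^{2s-1}$ grows for $s>\frac12$, you have $\delta_{2^jN}=N^{-1}2^{-j(2s-1)}$, while $\ell_{2^jN}\le\ell_N+Cj$. Hence the sum over $H\gtrsim N$ is $\lesssim_sB_*\ell_N^3/N$ with no extra logarithms. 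The weight $m_H^{-2}$ is exactly what $H^{-1}$ absorbs, so no mean-value gain and no cancellation of $(m_{K_1}m_{K_2})^{-1}$ is needed.

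\textbf{The symbol lemma you plan to prove first is false.} The factor $m_{K_1}m_{K_2}$ is not there. Take all four frequencies comparable to $H\gg N$. Your symbol $|\xi_1|^2m_N(\xi_1)[m_N(\xi_2)m_N(\xi_3)m_N(\xi_4)-m_N(\xi_1)]$ is then $\approx-|\xi_1|^2m_N(\xi_1)^2$, of size $H^2m_H^2$ rather than $H^2m_H^4$. The symmetrized $\sigma_4$ is generically of the same size, through $\sum_j\epsilon_j|\xi_j|^2m_N(\xi_j)^2$. Since a symbol is bounded by the total variation of its kernel, no such bound can hold. The term $|\xi_1|^2m_N(\xi_1)^2$, from pairing $\Delta\bar U$ with $I_NF(u)$, carries no weights of the other frequencies. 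Drop the refinement and sum directly.

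\textbf{Sextic term.} The bookkeeping you leave as ``$\cdots$'' is where the weights must be tracked, and the per-scale bound is not $B_*\ell_H/H^2$. Order the frequencies as $H\ge H'\ge M\ge L\ge K\ge J_{\min}$. The pair and $L^\infty_{t,x}$ bounds then carry $(m_H^2m_Mm_Lm_Km_{J_{\min}})^{-1}$. With the factor $m_L^2$ of Lemma~\ref{lem:compressed}, the chain \eqref{eq:sixsums} gives $B_*\ell_H/(H^2m_H^4)$ per scale. Summing over $H=2^jN$ gives $\lesssim_sN^{-2}\sum_j(\ell_N+Cj)2^{-j(4s-2)}\lesssim_s\ell_N/N^2$, which is where $s>\frac12$ enters. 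Alternatively, you can keep the product of the weights of the three factors of $\overline{F(U)}$; it is at most $m_Lm_Km_{J_{\min}}$ and also closes.

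\textbf{A small advantage of your form of $\mathcal E_6$.} Its symbol $[\prod_{j\le3}m_N(\xi_j)-m_N(\xi_1-\xi_2+\xi_3)]\prod_{j\ge4}m_N(\xi_j)$ vanishes when the three bracketed frequencies are at most $N/3$. So the two largest frequencies are $\gtrsim N$ piece by piece, whereas the paper must first subtract $\mathcal E_6(u_{\rm lo})=0$ as a whole (Lemma~\ref{lem:energythreshold}).
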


We start from the standard modified-energy identity; see
\cite[Section~4]{Dodson13}. We record the identity in our Fourier
convention and estimate the quartic and sextic terms separately.

Write $U=I_Nu$. For
$\Gamma_4=\{\xi_1-\xi_2+\xi_3-\xi_4=0\}$, define
\[
 \Lambda_4(M;u)=\int_{\Gamma_4}
 M(\vec\xi)\widehat u(\xi_1)\overline{\widehat u(\xi_2)}
             \widehat u(\xi_3)\overline{\widehat u(\xi_4)}
             \dd\xi_1\dd\xi_2\dd\xi_3,
 \quad \epsilon=(1,-1,1,-1).
\]

\begin{lemma}\label{lem:energyidentity}
The modified energy satisfies
\begin{equation}\label{eq:exactenergy}
 \frac{\dd}{\dd t}E(U)
 =\underbrace{\Imn\int\Delta\bar U\,[F(U)-I_NF(u)]}_{\mathcal E_4}
 +\underbrace{\Imn\int\overline{F(U)}I_NF(u)}_{\mathcal E_6}.
\end{equation}
With $\Phi_4=4\pi^2\sum_j\epsilon_j|\xi_j|^2$,
\begin{equation}\label{eq:quarticsymbol}
 \begin{aligned}
 \mathcal E_4&=-\frac{1}{4}\Imn\Lambda_4(\sigma_4;u),\\
 \sigma_4&=4\pi^2\sum_j\epsilon_j|\xi_j|^2m_N(\xi_j)^2
              -\Phi_4\prod_jm_N(\xi_j).
 \end{aligned}
\end{equation}
The spatial integral defining $\mathcal E_6$ has multiplier
\begin{equation}\label{eq:sexticsymbol}
 m_N(\xi_1-\xi_2+\xi_3)m_N(\xi_4)m_N(\xi_5)m_N(\xi_6),
\end{equation}
on $\xi_1-\xi_2+\xi_3=\xi_4-\xi_5+\xi_6$, with the Fourier
factors at positions $2,4,6$ conjugated.
\end{lemma}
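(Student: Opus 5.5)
The plan is to differentiate $E(U)$ directly using the equation satisfied by $U=I_Nu$, and then to read off the quartic symbol via Plancherel in the paper's Fourier convention. Applying $I_N$ to \eqref{eq:NLS} gives $(\ii\partial_t+\Delta)U=I_NF(u)$, that is, $\partial_tU=\ii\Delta U-\ii I_NF(u)$. For Schwartz solutions this yields
\[
 \frac{\dd}{\dd t}\frac12\|\nabla U\|_2^2=\Ren\int\nabla\bar U\cdot\nabla\partial_tU=-\Ren\int\Delta\bar U\,\partial_tU,
 \qquad
 \frac{\dd}{\dd t}\frac14\|U\|_4^4=\Ren\int\overline{F(U)}\,\partial_tU .
\]
We substitute $\partial_tU$ into both. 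In the kinetic term, $\Ren(-\ii\int\Delta\bar U\Delta U)=0$, which leaves $\Ren(\ii\int\Delta\bar U\,I_NF(u))=-\Imn\int\Delta\bar U\,I_NF(u)$. In the potential term, $\ii\int\overline{F(U)}\Delta U$ contributes $-\Imn\int\overline{F(U)}\Delta U=-\Imn\int F(U)\Delta\bar U\cdot(-1)$ after conjugation, and $-\ii\int\overline{F(U)}I_NF(u)$ contributes $\Imn\int\overline{F(U)}I_NF(u)$.

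Collecting terms, and using $\Imn\overline z=-\Imn z$, gives $\Imn\int\Delta\bar U\,[F(U)-I_NF(u)]+\Imn\int\overline{F(U)}I_NF(u)$, which is \eqref{eq:exactenergy}. The sign bookkeeping here is routine but must be checked carefully.

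The sextic symbol follows immediately from Plancherel. We write $\overline{F(U)}=\overline{U}U\overline{U}$ with each factor carrying $m_N(\xi_j)$. For $I_NF(u)$, the factor $m_N$ is evaluated at the output frequency $\xi_1-\xi_2+\xi_3$, with unmodified $\widehat u$ inside. Labeling the frequencies so that the conjugated positions are $2,4,6$ gives \eqref{eq:sexticsymbol} on the stated hyperplane; it is only a matter of which triple is called $1,2,3$.

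For the quartic term we expand both pieces on $\Gamma_4$. Writing $\widehat{\Delta\bar U}$ produces a factor $-4\pi^2|\xi|^2 m_N(\xi)$ on a conjugated slot. The term $\int\Delta\bar U\,F(U)$ gives $-4\pi^2|\xi_4|^2\prod_jm_N(\xi_j)$ against $\widehat u(\xi_1)\overline{\widehat u(\xi_2)}\widehat u(\xi_3)\overline{\widehat u(\xi_4)}$. The term $\int\Delta\bar U\,I_NF(u)$ gives $-4\pi^2|\xi_4|^2m_N(\xi_4)m_N(\xi_1-\xi_2+\xi_3)=-4\pi^2|\xi_4|^2m_N(\xi_4)^2$ on $\Gamma_4$. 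The resulting symbol $4\pi^2|\xi_4|^2(m_N(\xi_4)^2-\prod m_N)$ is not yet symmetric, so we symmetrize over the symmetries of $\Gamma_4$ that preserve the form $\Imn\Lambda_4$.

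The exchanges $1\leftrightarrow3$ and $2\leftrightarrow4$ leave $\Lambda_4$ invariant. The exchange $(1,3)\leftrightarrow(2,4)$ conjugates $\Lambda_4$, so under $\Imn$ it flips the sign of the symbol. Averaging $|\xi_4|^2(\cdot)$ over this group of order eight produces $\frac14\sum_j\epsilon_j|\xi_j|^2(\cdot)$ up to an overall sign. This yields $\sigma_4=4\pi^2\sum_j\epsilon_j|\xi_j|^2m_N(\xi_j)^2-\Phi_4\prod_jm_N(\xi_j)$ and the prefactor $-\frac14$ in \eqref{eq:quarticsymbol}.

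The main obstacle is tracking signs and the factor $\frac14$. The $-1$ from $\Delta$, the sign flip of $\Imn$ under conjugation, and the counting in the symmetrization must be made consistent with the stated normalization. To pin these down, I would verify the final sign on a test case, for instance $m_N\equiv1$, where $\sigma_4=0$ and conservation of $E$ must hold.
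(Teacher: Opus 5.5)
Your proposal is correct and follows the paper's route. You differentiate $E(U)$ using $(\ii\partial_t+\Delta)U=I_NF(u)$, read off the symbols by Plancherel, and symmetrize $4\pi^2|\xi_4|^2\bigl(m_N(\xi_4)^2-\prod_jm_N(\xi_j)\bigr)$ using the same-type interchanges and conjugation; done as you describe, this yields exactly $-\frac14\sigma_4$ with no residual sign ambiguity (though your proposed $m_N\equiv1$ check could not fix the sign, since $\sigma_4\equiv0$ there).
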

\begin{proof}
Since $(\ii\partial_t+\Delta)U=I_NF(u)$,
\begin{align*}
 \frac{\dd}{\dd t}E(U)
 &=\Ren\int\overline{U_t}[-\Delta U+F(U)]\\
 &=\Ren\int\overline{U_t}[F(U)-I_NF(u)].
\end{align*}
Substituting $\overline{U_t}=-\ii\Delta\bar U+
\ii\overline{I_NF(u)}$ gives \eqref{eq:exactenergy}.
For the quartic term, write
\[
 a_j=4\pi^2\left[|\xi_j|^2m_N(\xi_j)^2
                 -|\xi_j|^2\prod_km_N(\xi_k)\right].
\]
Then $\mathcal E_4=\Imn\Lambda_4(a_4;u)$. Interchanging the two
unconjugated factors or the two conjugated factors, and taking
complex conjugates, gives
\[
 \Imn\Lambda_4(a_2;u)=\mathcal E_4,\qquad
 \Imn\Lambda_4(a_1;u)=\Imn\Lambda_4(a_3;u)=-\mathcal E_4,
\]
which proves \eqref{eq:quarticsymbol}. Expanding the two cubic factors
gives \eqref{eq:sexticsymbol}.
\end{proof}

The quartic term explains the normalization in \eqref{eq:Bdef}. Its nonzero
dyadic contributions have frequencies $H\sim H'\ge M\ge K$, with
$H'\ge c_EN$. The multiplier has a convolution kernel of total
variation bounded by $C_sH^2m_H^2$.
Pairing the factors at $(H,M)$ and $(H',K)$
will therefore give
\begin{equation}\label{eq:quarticpairbound}
 \int_J|\mathcal E_{4;\vec H}(t)|\dd t
 \lesssim_s H^2m_H^2
 \frac{B_*\ell_H}{H^3m_H^2m_Mm_K}
 \theta_{H,M}\theta_{H,K}.
\end{equation}
Here $\vec H=(H,H',M,K)$ lists the frequencies in decreasing order.
For each ordering of the four original dyadic frequencies,
$\mathcal E_{4;\vec H}$ denotes the corresponding term, with the original
conjugations retained. We estimate the finitely many orderings separately
and use the same convention for the sextic term. The gain $H^{-1}$
in this estimate yields the small factor $N^{-1}$ after summation.
For the sextic term, two factors are instead placed in $L^\infty_{t,x}$;
the multiplier supplies a further factor $m_L^2$, where $L$ is the
fourth largest frequency. We establish these multiplier bounds below
before carrying out the sums.

\subsection{Kernel bounds and frequency separation}
\begin{lemma}\label{lem:Ikernel}
For fixed $\frac{1}{2}<s<1$ and $c>0$, $N\ge1$, and dyadic $H,L>0$,
\begin{equation}\label{eq:Ikernel}
 \|K_{I_N}\|_1\lesssim_s1,\qquad
 \|K_{I_NP_{\ge cL}}\|_1\lesssim_{s,c}m_L,\qquad
 \|K_{I_NP_H}\|_1\lesssim_sm_H .
\end{equation}
\end{lemma}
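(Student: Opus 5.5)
The plan is to reduce every bound to the scaling identity \eqref{eq:smoothkernel} after normalizing the frequency scale. The $L^1$ norm of a kernel is invariant under $K(x)\mapsto\rho^3K(\rho x)$. Since $m_N(\xi)=m_1(\xi/N)$, we have $\|K_{I_N}\|_1=\|K_{I_1}\|_1$, and similarly for the other two operators after rescaling by $N$ (with $L,H$ replaced by $L/N,H/N$). So it suffices to treat $N=1$.

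\emph{First bound.} Split $m_1=\chi(\xi/2)m_1+(1-\chi(\xi/2))m_1$. The first piece is a fixed $C_c^\infty$ symbol, so \eqref{eq:smoothkernel} bounds its kernel. The second piece equals $(1-\chi(\xi/2))|\xi|^{s-1}$ for $|\xi|\ge2$, since $m_1$ is the exact power there. Decompose it dyadically as $\sum_{H\ge1}p_H(\xi)\widetilde m(\xi)$. Each symbol $p_H\widetilde m$, rescaled by $H$, is $H^{s-1}$ times a symbol supported in $\{1/2\le|\xi|\le2\}$ with uniformly bounded $W^{6,\infty}$ norm. Hence its kernel has $L^1$ norm $\lesssim_s H^{s-1}$, and the sum over $H\ge1$ converges geometrically because $s<1$. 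This gives $\|K_{I_N}\|_1\lesssim_s1$.

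\emph{Third bound.} The symbol $p_H(\xi)m_N(\xi)$, rescaled by $H$, is $p_1(\xi)m_N(H\xi)$ on $\{1/2\le|\xi|\le2\}$. I claim that $m_N(H\xi)/m_H$ has $W^{6,\infty}$ norm bounded uniformly in $H,N$ there. For $H\lesssim N$ this holds because $m_1$ is smooth with bounded derivatives at scale $N$, i.e.\ $|\partial^\alpha m_N|\lesssim N^{-|\alpha|}$. For $H\gg N$ it holds because $m_N(H\xi)=(N/H)^{1-s}|\xi|^{s-1}$ exactly. In the transitional range $H\sim N$ both descriptions give bounded derivatives, since $m_H\sim1$. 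The symbol estimates $|\partial^\alpha m_N(\xi)|\lesssim_{s,\alpha}m_N(\xi)|\xi|^{-|\alpha|}$ unify all three cases. Then \eqref{eq:smoothkernel} gives $\|K_{I_NP_H}\|_1\lesssim_s m_H$.

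\emph{Second bound.} Write $P_{\ge cL}=1-P_{\le cL/2}$, with symbol $1-\chi(2\xi/(cL))$ vanishing for $|\xi|\le cL/2$. Decompose this symbol as $\sum_{H\gtrsim cL}p_H(\xi)(1-\chi(2\xi/(cL)))$. For the finitely many $H\sim cL$, the factor $1-\chi(2\xi/(cL))$ rescaled by $H$ is a smooth function with bounded derivatives depending on $c$. For the remaining $H$ the factor equals one. Repeating the argument of the third bound for each piece gives $L^1$ norm $\lesssim_{s,c}m_H$. Summing over $H\gtrsim cL$ gives $\sum_{H\gtrsim cL}m_H\lesssim_{s,c}m_L$. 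This sum converges only when $L\gtrsim N$, where $m_H=(N/H)^{1-s}$ decays geometrically.

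The main obstacle is the case $L\ll N$. There $m_L=1$, but the sum over $cL\lesssim H\lesssim N$ has about $\log(N/L)$ terms of size one, so the dyadic decomposition loses a logarithm. In this case I would not decompose. Instead, write $I_NP_{\ge cL}=I_N-I_NP_{\le cL/2}$. The first term is bounded by the first estimate. The second has the symbol $m_N(\xi)\chi(2\xi/(cL))$, which, rescaled by $cL$, is a $C_c^\infty$ symbol with uniformly bounded $W^{6,\infty}$ norm, because $\partial^\alpha m_N$ contributes $(L/N)^{|\alpha|}\lesssim1$. So its kernel is $O_c(1)=O(m_L)$. Splitting into $L\ll N$ (this argument) and $L\gtrsim N$ (the geometric sum) proves the second bound.
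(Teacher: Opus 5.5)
Your proof is correct and follows essentially the paper's argument. The paper also gets $\|K_{I_NP_H}\|_1\lesssim_s m_H$ by rescaling, sums the geometrically decaying pieces above $N$ for the first bound and for the case $L\gtrsim N$, and avoids the logarithmic loss when $L\le N$ by combining $\|K_{I_N}\|_1\lesssim_s1$ with the bounded total variation of the kernel of $P_{\ge cL}$. Your re-estimation of $I_NP_{\le cL/2}$ by rescaling its symbol is an equivalent way to do that last step.
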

\begin{proof}
Rescaling on the support of $P_H$ gives
\[
 |K_{I_NP_H}(x)|\lesssim_s m_HH^3(1+H|x|)^{-4}.
\]
The kernel of $I_NP_{\le N}$ has uniformly bounded $L^1$ norm, and
the norms of the kernels of $I_NP_H$ are summable for $H>N$, since
$m_{2^jN}=2^{-j(1-s)}$ for $j\ge1$. This proves the first and third
bounds. If $L>N$, the same sum over $H\gtrsim cL$ is
$O_{s,c}(m_L)$. If $L\le N$, use the first bound and the uniformly
bounded total variation of the kernel of $P_{\ge cL}$.
The same argument applies to the fixed smooth high-frequency
cutoffs used below.
\end{proof}

\begin{lemma}\label{lem:energythreshold}
Let $u_{\rm lo}=P_{\le N/100}u$ and $u_{\rm hi}=u-u_{\rm lo}$.
Every nonzero dyadic contribution to $\mathcal E_4(u)$ or to the
multilinear expansion of $\mathcal E_6(u)-\mathcal E_6(u_{\rm lo})$
in $u_{\rm lo},u_{\rm hi}$ has two comparable largest frequencies,
both at least $c_EN$.
\end{lemma}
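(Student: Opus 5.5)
The argument rests on two elementary facts: a frequency-support constraint from convolution, and the observation that the multipliers become trivial when every frequency is below $N$.

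\textbf{Quartic term.} Write $\mathcal E_4$ through \eqref{eq:quarticsymbol} and decompose each of the four factors dyadically, $u=\sum_{H_j}u_{H_j}$, with frequencies $H_1,\dots,H_4$. Order them decreasingly as $H\ge H'\ge M\ge K$.

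First, the constraint $\xi_1-\xi_2+\xi_3-\xi_4=0$ together with $|\xi_j|\in[H_j/2,2H_j]$ shows that the largest frequency is bounded by three times the second. Hence $H\le 4\cdot 3H'$, and in particular $H\sim H'$.

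Second, suppose every $|\xi_j|\le N$. Then $m_N(\xi_j)=1$ for all $j$, and so
\[
\sigma_4=4\pi^2\sum_j\epsilon_j|\xi_j|^2-\Phi_4=0 .
\]
Thus if $2H'$ is small compared with $N$, the quantity $\max_j|\xi_j|\le 2H\le 24H'$ is still below $N$, and the term vanishes. Concretely, $2H\le 24H'<N$ whenever $H'<N/24$. Since $c_E\le 2^{-16}$ is much smaller than $1/24$, every nonzero contribution has $H'\ge N/24\ge c_EN$.

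\textbf{Sextic difference.} Expand $\mathcal E_6(u_{\rm lo}+u_{\rm hi})-\mathcal E_6(u_{\rm lo})$ multilinearly. Every surviving term contains at least one factor $u_{\rm hi}$. That factor has Fourier support in $|\xi|\ge N/100$, because $P_{\le N/100}=\chi(D/(N/100))$ equals one on $|\xi|\le N/100$.

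Then decompose all six factors dyadically. The frequencies satisfy the constraint $\xi_1-\xi_2+\xi_3-\xi_4+\xi_5-\xi_6=0$ given in \eqref{eq:sexticsymbol}. By the same triangle-inequality argument, the largest dyadic frequency is at most $5\cdot 4=20$ times the second largest.

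The largest frequency is at least the frequency of the $u_{\rm hi}$ factor, which is $\gtrsim N/200$ in dyadic terms. Its comparability with the second largest therefore gives that the second largest is at least about $N/(200\cdot 20\cdot 4)\ge 2^{-16}N\ge c_EN$. So both top frequencies are at least $c_EN$.

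\textbf{Main point to check.} The only step needing care is the bookkeeping of constants. One must verify that $c_E\le 2^{-16}$ absorbs the dyadic losses: factors of $2$ from the supports $[H/2,2H]$, the factor $20$ from the six-term constraint, and the $1/100$ threshold. One must also confirm that the sextic symbol itself does not vanish identically in a way that matters. No cancellation is needed there, since only nonzero contributions are asserted to have the stated structure.
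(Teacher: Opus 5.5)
Your proposal is correct and follows the paper's proof: the triangle inequality applied to the signed frequency relation makes the two largest dyadic frequencies comparable, $\sigma_4$ vanishes identically when every $|\xi_j|\le N$ (since then all $m_N(\xi_j)=1$), and every surviving sextic term contains a factor $u_{\rm hi}$ with Fourier support in $\{|\xi|\ge N/100\}$. Your constants ($H'\ge N/24$ and $T'\ge N/16000$, where the extra factor $4$ is unnecessary) differ harmlessly from the paper's $N/40$ and $N/4000$. The paper also records $\mathcal E_6(u_{\rm lo})=\Imn\int|u_{\rm lo}|^6=0$; this is not part of the statement, but it is what lets the lemma be applied to $\mathcal E_6(u)$ itself.
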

\begin{proof}
For a quartic or sextic interaction, write $\epsilon_j=(-1)^{j+1}$
for the signs in its frequency relation. If $T,T'$ are the two largest
frequencies, then
\[
 T'<T/20\ \Longrightarrow\
 |\xi_{\max}|\ge T/2>\sum_{j\ne\max}|\xi_j|,
 \qquad
 \sum_j\epsilon_j\xi_j=0\ \Longrightarrow\ T'\ge T/20 .
\]
Thus
\[
 \sigma_4\ne0\ \Longrightarrow\
 \max_j|\xi_j|>N\ \Longrightarrow\ T\ge N/2,\quad T'\ge N/40 .
\]
The Fourier supports of $u_{\rm lo}$ and $u_{\rm hi}$ give
\begin{gather*}
 I_Nu_{\rm lo}=u_{\rm lo},\qquad
 I_NF(u_{\rm lo})=F(u_{\rm lo}),\qquad
 \mathcal E_6(u_{\rm lo})=\Imn\int|u_{\rm lo}|^6=0,\\
 \supp\widehat{u_{\rm hi}}\subset\{|\xi|\ge N/100\},\qquad
 T\ge N/200,\qquad T'\ge N/4000>c_EN .
\qedhere
\end{gather*}
\end{proof}

In the sextic term, the completely low
contribution is subtracted as a whole before taking absolute values.
The remaining low/high expansion introduces only finitely many terms
with factors $P_{\le N/100}u_H$ or $(1-P_{\le N/100})u_H$. These multipliers
preserve the original Fourier supports and have uniformly bounded
measure kernels, so \eqref{eq:convolutionpair} applies.

\subsection{Quartic variation}
\begin{lemma}\label{lem:quartickernel}
On a nonzero quartic term, let $H=\max_jH_j$ and set
$\chi_{H_j}(\xi)=\widetilde\chi(\xi/H_j)$, where
$\widetilde\chi$ is smooth, real and radial, supported in
$\frac{1}{4}\le|\xi|\le4$ and equal to one for $\frac{1}{2}\le|\xi|\le2$.
The quadrilinear form with symbol $\sigma_4\prod_j\chi_{H_j}$,
evaluated on $u_{H_1},\ldots,u_{H_4}$, can be written as
\[
 \int_{(\R^3)^4}\!\int_{\R^3}
 u_{H_1}(x-y_1)\overline{u_{H_2}(x-y_2)}
 u_{H_3}(x-y_3)\overline{u_{H_4}(x-y_4)}
 \,\dd x\dd\nu_4(\vec y),
 \qquad \|\nu_4\|_{\TV}\lesssim_s H^2m_H^2,
\]
where $\nu_4$ is independent of time.
\end{lemma}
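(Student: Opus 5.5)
The plan is to realize the symbol $\sigma_4\prod_j\chi_{H_j}$ as the Fourier transform of a finite measure on $(\R^3)^4$ and to bound its total variation by scaling.

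\textbf{Reduction.} By Lemma~\ref{lem:energythreshold} we may assume two largest frequencies $H\sim H'$ with $H\ge N/2$. Since $\widetilde\chi=1$ on the supports of the $P_{H_j}$, the form with symbol $\sigma_4\prod_j\chi_{H_j}$ evaluated on $u_{H_j}$ is
\[
 \int_{\Gamma_4}\sigma_4\prod_j\chi_{H_j}(\xi_j)\widehat{u_{H_j}}(\xi_j)\cdots.
\]
If $\sigma_4\prod_j\chi_{H_j}$ is the restriction to $\Gamma_4$ of $\widehat{\nu_4}(\xi_1,-\xi_2,\xi_3,-\xi_4)$ for a finite measure $\nu_4$, then Fourier inversion together with Fubini gives exactly the stated physical-space representation. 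Note that $\int e^{-2\pi\ii x\cdot(\xi_1-\xi_2+\xi_3-\xi_4)}\dd x$ imposes $\Gamma_4$, and the kernel $\nu_4$ is time independent.

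\textbf{Splitting the symbol.} We decompose $\sigma_4$ into a sum of finitely many tensor-type pieces, so that the tensor products of measures satisfy $\|\nu_1\otimes\cdots\otimes\nu_4\|_{\TV}=\prod\|\nu_j\|_{\TV}$. Write
\[
 \sigma_4=4\pi^2\sum_j\epsilon_j|\xi_j|^2m_N(\xi_j)^2-4\pi^2\sum_j\epsilon_j|\xi_j|^2\prod_k m_N(\xi_k).
\]
Each summand is a product of single-variable symbols:
\begin{itemize}[label={}]
\item $a_j(\xi_j)=|\xi_j|^2m_N(\xi_j)^2\chi_{H_j}(\xi_j)$, times $\chi_{H_k}(\xi_k)$ for $k\ne j$;
\item $|\xi_j|^2m_N(\xi_j)\chi_{H_j}(\xi_j)\prod_{k\ne j}m_N(\xi_k)\chi_{H_k}(\xi_k)$.
\end{itemize}
By the scaling kernel bound \eqref{eq:smoothkernel} (the rescaled symbols $b(\xi)=\chi_{H_j}(H_j\xi)m_N(H_j\xi)/m_{H_j}$ have uniformly bounded $W^{6,\infty}$ norms, as in Lemma~\ref{lem:Ikernel}), each one-variable factor has kernel $L^1$ norm bounded as follows:
\begin{itemize}[label={}]
\item $|\xi|^2m_N^2\chi_{H_j}$ has norm $\lesssim_s H_j^2m_{H_j}^2$;
\item $|\xi|^2m_N\chi_{H_j}$ has norm $\lesssim_s H_j^2m_{H_j}$;
\item $m_N\chi_{H_k}$ has norm $\lesssim_s m_{H_k}$;
\item $\chi_{H_k}$ has norm $\lesssim 1$.
\end{itemize}

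\textbf{Main obstacle.} The second sum in the naive splitting is harmless, since $m\le1$. The issue is the first sum: the terms with $j$ indexing a low frequency give at most $H_j^2\le H^2$, but $m_{H_j}$ may be $1$, so we must check that everything is $\lesssim H^2m_H^2$ without cancellation. The first sum is bounded by $\sum_jH_j^2m_{H_j}^2\lesssim H^2m_H^2$, because $H^2m_H^2=N^{2-2s}H^{2s}$ is increasing in $H$ (as $s>0$), so lower frequencies contribute less. The second sum is bounded by $\sum_jH_j^2m_{H_j}\prod_{k\ne j}m_{H_k}$. At least one of the two top frequencies $H,H'$ lies among the $k\ne j$, which gives a factor $m_H$. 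If $j$ is a top index, its own factor $m_{H_j}\sim m_H$ supplies the second $m_H$. If $j$ is a low index, the two top factors $m_H m_{H'}\sim m_H^2$ both appear in $\prod_{k\ne j}$, and $H_j^2m_{H_j}\le H_j^2\le H^2$. In every case the product is $\lesssim_s H^2m_H^2$, using \eqref{eq:thetastability} for $m_{H'}\sim m_H$.

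The phase factors $\epsilon_j$ and the conjugations are handled by the reflection $y\mapsto-y$ on the conjugated slots, which preserves total variation. Summing the finitely many pieces gives $\|\nu_4\|_{\TV}\lesssim_s H^2m_H^2$, as claimed.
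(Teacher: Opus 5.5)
Your proposal is correct and follows the paper's proof. You split $\sigma_4\prod_j\chi_{H_j}$ into the two sums of tensor-product symbols and bound each one-variable kernel by \eqref{eq:smoothkernel}. You then sum, using that $T\mapsto T^2m_T^2$ is nondecreasing for the first sum, and $H_j^2\prod_k m_{H_k}\le H^2m_Hm_{H'}\lesssim_s H^2m_H^2$ (via $H'\sim H$) for the second. One small point: your remark that the second sum is harmless ``since $m\le1$'' is not enough on its own, but your subsequent case analysis, which keeps both top-frequency factors, supplies exactly what is needed.
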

\begin{proof}
Let $H\sim H'\ge M\ge K$ be the dyadic frequencies in decreasing order.
For a symbol $b$, write $K_b=\check b$ for the convolution kernel of $b(D)$.
By \eqref{eq:smoothkernel},
\begin{align*}
 \|K_{|\xi|^2m_N^2\chi_{H_j}}\|_1
 &\lesssim_s H_j^2m_{H_j}^2,\\
 \|K_{|\xi|^2m_N\chi_{H_j}}\|_1
 &\lesssim_s H_j^2m_{H_j},\qquad
 \|K_{m_N\chi_{H_j}}\|_1\lesssim_s m_{H_j}.
\end{align*}
We estimate the kernels of the two terms in
$\sigma_4\prod_k\chi_{H_k}$ separately:
\begin{align*}
 \sum_j\|K_{|\xi|^2m_N^2\chi_{H_j}}\|_1
          \prod_{k\ne j}\|K_{\chi_{H_k}}\|_1
 &\lesssim_s\sum_jH_j^2m_{H_j}^2
 \lesssim_sH^2m_H^2,\\
 \sum_j\|K_{|\xi|^2m_N\chi_{H_j}}\|_1
          \prod_{k\ne j}\|K_{m_N\chi_{H_k}}\|_1
 &\lesssim_s\sum_jH_j^2\prod_{k=1}^4m_{H_k}\\
 &\lesssim_sH^2m_Hm_{H'}m_Mm_K
 \lesssim_sH^2m_H^2.
 \qedhere
\end{align*}
\end{proof}

\begin{proposition}\label{prop:quarticvariation}
The quartic term in \eqref{eq:exactenergy} satisfies
\begin{equation}\label{eq:E4bound}
 \int_J|\mathcal E_4(t)|\dd t\lesssim_sB_*\frac{\ell_N^3}{N}.
\end{equation}
\end{proposition}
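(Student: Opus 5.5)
We decompose $\mathcal E_4$ dyadically: $u=\sum_{H}u_H$ in each of the four slots, and group the terms by the ordering of the four frequencies. By Lemma~\ref{lem:energythreshold}, only terms with two comparable largest frequencies $H\sim H'\ge c_EN$ survive. Write the remaining two as $M\ge K$. Since only finitely many orderings and comparable pairs $H'\sim H$ occur, it suffices to bound a single ordering uniformly in the finitely many ratios $H'/H$.

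For a fixed $\vec H=(H,H',M,K)$, we first insert the fattened cutoffs $\chi_{H_j}$, which equal one on the Fourier supports of the $u_{H_j}$. Lemma~\ref{lem:quartickernel} then writes $\mathcal E_{4;\vec H}$ as a form with a time-independent measure $\nu_4$ satisfying $\|\nu_4\|_{\TV}\lesssim_sH^2m_H^2$. We pair the factor at frequency $H$ with the one at $M$, and the factor at $H'$ with the one at $K$. Lemma~\ref{lem:kernel}, with $q\equiv1$ and the conjugations retained, bounds the time integral by $H^2m_H^2B_1B_2$, where $B_i$ are the translated bilinear norms. We estimate each $B_i$ by \eqref{eq:pair} at threshold $R=c_EN$. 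The pairs have $H,H'\ge c_EN$ and lower frequency at most the higher one, and after replacing $H'$ by $H$ via \eqref{eq:thetastability} this gives
\[
 \int_J|\mathcal E_{4;\vec H}|\dd t\lesssim_s H^2m_H^2\,\frac{B_*\ell_H}{H^3m_H^2m_Mm_K}\,\theta_{H,M}\theta_{H,K}
 =\frac{B_*\ell_H}{H}\,\frac{\theta_{H,M}\theta_{H,K}}{m_Mm_K},
\]
which is \eqref{eq:quarticpairbound}. One should note that when the low/high splitting or the kernel of $P_{\le N/100}$ modifies factors, \eqref{eq:convolutionpair} absorbs it. A possible need to use $\BB(cc_EN)$ instead of $B_*$ for neighboring dyadic frequencies is handled by building that slack into the choice of $c_E$, or by the monotonicity of $\BB$ together with a relabeling of the threshold.

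Next comes the summation. For fixed $H$, the sums over $M\lesssim H$ and $K\lesssim H$ each give, by \eqref{eq:sumlow}, a factor $\ell_H/m_H$. Summing over $H'\sim H$ is finite. We are left with
\[
 \sum_{H\gtrsim c_EN}\frac{B_*\ell_H^3}{Hm_H^2}=B_*\sum_{H\gtrsim c_EN}\delta_H\ell_H^3 .
\]
For $H\le N$, only $O(1)$ dyadic values with $c_EN\lesssim H\le N$ occur, each contributing $\lesssim\ell_N^3/N$. For $H=2^jN$ we have $\delta_H\ell_H^3\le N^{-1}2^{-j(2s-1)}(\ell_N+Cj)^3$, which is summable in $j$ and gives $\lesssim_s\ell_N^3/N$, as in \eqref{eq:deltahigh}. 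This proves \eqref{eq:E4bound}.

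The main obstacle is the bookkeeping in the first step. For each ordering of the frequencies we must make sure that the pair assigned to \eqref{eq:pair} has its top frequency at least $c_EN$ and its partner below it. We must also check that the replacement of $m_{H'}$ by $m_H$, the factors of $\theta$, and the kernel factor $H^2m_H^2$ combine without loss, in particular when $M\sim H$, where \eqref{eq:pair} still applies since $K\le H$ is allowed. Finally, the interchange of the infinite dyadic sums with the time integral has to be justified by truncation and dominated convergence, as in the paper's conventions.
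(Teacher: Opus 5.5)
Your proposal is correct and follows the paper's proof essentially step for step. Both use Lemma~\ref{lem:energythreshold}, the kernel bound of Lemma~\ref{lem:quartickernel}, Lemma~\ref{lem:kernel} with the pairing $(H,M)$, $(H',K)$ and \eqref{eq:pair} at $R=c_EN$, then \eqref{eq:sumlow} twice and the geometric sum $\sum_{H\gtrsim N}\delta_H\ell_H^3\lesssim_s\ell_N^3/N$. Your hedge about possibly needing $\BB(cc_EN)$ is unnecessary: the factors are exact dyadic pieces $u_{H_j}$ whose two top frequencies are already at least $N/40\ge c_EN$, so $B_*$ applies directly.
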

\begin{proof}
Lemma~\ref{lem:energythreshold} gives $H'\sim H$ and $H\gtrsim N$.
Apply Lemmas~\ref{lem:kernel} and \ref{lem:quartickernel}, pairing
$(H,M)$ and $(H',K)$, to obtain \eqref{eq:quarticpairbound}.
For each $H$, there are only a bounded number of possible $H'$.
Summing over the lower frequencies using \eqref{eq:sumlow} gives
\begin{align*}
 \int_J|\mathcal E_4|
 &\lesssim_sB_*\sum_{H\gtrsim N}\frac{\ell_H}{H}
        \left(\sum_{M\lesssim H}\frac{\theta_{H,M}}{m_M}\right)^2\\
 &\lesssim_sB_*\sum_{H\gtrsim N}\frac{\ell_H^3}{Hm_H^2}
 \lesssim_s\frac{B_*}N
       \sum_{j\ge0}(\ell_N+Cj)^3\,2^{-j(2s-1)}
 \lesssim_sB_*\frac{\ell_N^3}N .
\end{align*}
The last series converges because $2s-1>0$. The finitely many dyadic
frequencies with $H\gtrsim N$ and $H<N$ satisfy the same bound by
\eqref{eq:thetastability}.
\end{proof}

\subsection{Sextic variation}
Order the six dyadic frequencies as
\[
 H\ge H'\ge M\ge L\ge K\ge J_{\min},\qquad H'\sim H.
\]
In this subsection $L$ denotes the fourth largest dyadic frequency.
The factors from $\overline{F(I_Nu)}$ occupy positions $4,5,6$ in
\eqref{eq:sexticsymbol}. The next lemma retains two factors of $m_L$;
these give the summability needed for $s>\frac{1}{2}$.

\begin{lemma}\label{lem:compressed}
Each sextic spatial integral in \eqref{eq:exactenergy}, localized at
frequencies $H_1,\ldots,H_6$, can be written as
\[
 \int_{(\R^3)^6}\!\int_{\R^3}
 \prod_{j=1}^3
 u_{H_{2j-1}}(x-y_{2j-1})\overline{u_{H_{2j}}(x-y_{2j})}
 \,\dd x\dd\nu_{\vec H}(\vec y),\qquad
 \|\nu_{\vec H}\|_{\TV}\lesssim_s m_L^2,
\]
where $L$ is the fourth largest frequency and $\nu_{\vec H}$ is
independent of time.
\end{lemma}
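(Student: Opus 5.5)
Everything reduces to kernel bounds for the multiplier \eqref{eq:sexticsymbol}, localized by the fattened cutoffs $\chi_{H_j}$ from Lemma~\ref{lem:quartickernel}, together with the tree representation \eqref{eq:producttreekernel}.

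\emph{Setup.} The integrand of $\mathcal E_6$ is $\overline{F(U)}\,I_NF(u)$. The inner cubic $I_NF(u)$ has the composite multiplier $m_N(\xi_1-\xi_2+\xi_3)$ acting on a product. The factors at positions $4,5,6$ each carry their own $m_N(\xi_j)$. So after frequency localization the integral has the form
\[
\prod_{j=4}^6 (K_j*f_j)\cdot\bigl[K_0*\textstyle\prod_{j=1}^3(K_j*f_j)\bigr],
\]
with $f_j=u_{H_j}$ (conjugated as required) and $K_j$ the kernel of $m_N(D)\chi_{H_j}(D)$ or of $\chi_{H_j}(D)$. Here $K_0$ is the kernel of $I_N$, possibly composed with a smooth cutoff adapted to the output frequency. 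By \eqref{eq:producttreekernel} this is exactly the stated representation, with $\|\nu_{\vec H}\|_{\TV}\le\prod_{j=0}^6\|K_j\|_{\TV}$. Lemma~\ref{lem:Ikernel} gives $\|K_{m_N\chi_{H_j}}\|_1\lesssim_s m_{H_j}\le1$, $\|K_{\chi_{H_j}}\|_1\lesssim1$ and $\|K_{I_N}\|_1\lesssim_s1$. Thus a crude bound $O_s(1)$ is immediate, and what remains is to extract $m_L^2$.

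\emph{Extracting the two factors of $m_L$.} The plan is a case analysis on where the four largest frequencies $H,H',M,L$ sit among the two triples $\{1,2,3\}$ (inner, under $I_N$) and $\{4,5,6\}$ (outer, each with its own $m_N$).

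If at least two of the four largest lie in the outer triple, their individual multipliers give $m_{H_a}m_{H_b}\le m_L^2$, since $m$ is nonincreasing.

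Otherwise at least three of the four largest, all $\ge L$, lie in the inner triple, so the inner triple is $\{H,H',M\}$ or contains three of the top four. At most one of the top four is then outer; if there is one, it contributes a factor $m_{\ge L}\le m_L$. The second factor must come from the composite $m_N(\xi_1-\xi_2+\xi_3)$. Its argument is the output frequency of the inner product. By the frequency relation $\xi_1-\xi_2+\xi_3=\xi_4-\xi_5+\xi_6$, that output frequency has size $\lesssim\max_{j\ge4}H_j$, and it is also where the output is localized. I would insert a smooth partition at the output frequency $T$: for $T\gtrsim L$, use $\|K_{I_NP_{\ge cL}}\|_1\lesssim m_L$ from Lemma~\ref{lem:Ikernel}. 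When the output lies below $cL$, I would use the frequency relation to force it up again: the outer triple then has at most one element $\ge L$, and the output must balance it. So if only one outer frequency is $\ge L$, the output $T\sim$ that frequency $\ge L$ (up to constants), giving $m_L$.

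The remaining configuration is all four top frequencies in the inner triple, which is impossible since the triple has only three slots. It is therefore enough to treat the case of exactly three inner and one outer among the top four, handled above, and the case of three inner and none outer. In the latter case the fourth largest $L$ is outer. Then two outer factors $\ge$ the lower frequencies contribute nothing, but the output $T$ must be comparable to the largest outer frequency, which is $L$. So $m_N$ at output gives $m_L$, and the outer factor at $L$ gives another $m_L$.

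\emph{Main obstacle.} The delicate point is making "the output frequency is $\gtrsim L$" rigorous with smooth, non-idempotent cutoffs. I would write $I_N=I_NP_{\ge cL}+I_NP_{<cL}$ on the inner product. In the configuration under consideration, the Fourier support of the outer product $\prod_{j\ge4}$ paired against $P_{<cL}$ must vanish unless the outer triple contains a frequency $\gtrsim L$, which is supplied by its largest member. For small $c$ the piece $I_NP_{<cL}$ contributes only when the outer maximum is $<L/20$-type, contradicting the ordering. This Fourier-support argument, parallel to Lemma~\ref{lem:energythreshold}, removes the low-output piece. The high-output piece then has kernel norm $\lesssim_s m_L$ by Lemma~\ref{lem:Ikernel}, while each relevant outer factor contributes $m_{H_j}\lesssim m_L$ via $\|K_{m_N\chi_{H_j}}\|_1$. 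All kernels are time-independent, so $\nu_{\vec H}$ is time-independent.
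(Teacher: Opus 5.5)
\textbf{There is a gap in your case split, though it is easily repaired.} Your overall strategy matches the paper's. You represent the sextic form through \eqref{eq:producttreekernel}. You take $m_L^2$ from two outer factors when possible. Otherwise you take one $m_L$ from the largest outer factor and the other from $I_N$ restricted to output frequencies $\gtrsim L$.

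The problem is where you draw the line between the cases. You split according to whether two of the four largest frequencies lie in the outer triple $\{4,5,6\}$, that is, whether the second largest outer frequency $S_2$ is at least $L$. In the complementary case you assert that the output $\xi_4-\xi_5+\xi_6$ is comparable to the largest outer frequency $S_1\ge L$, so that $I_NP_{<cL}$ drops out. This fails when $S_2<L$ but $S_2$ is comparable to $L$.

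\emph{Counterexample.} Take inner frequencies $H\sim H'\ge M\ge L$ and outer frequencies $L,L/2,L/2$; then $L$ is the fourth largest frequency. Choose $\xi_4=0.6L\,e_1$, $\xi_5=0.9L\,e_1$, $\xi_6=0.3L\,e_1$. These lie in the supports of $P_L,P_{L/2},P_{L/2}$, and the output vanishes. So the low-output piece of $I_N$ genuinely contributes, and its kernel has norm only $O_s(1)$. Dyadic ties, such as outer frequencies $L,L,L$, cause the same problem, and there ``which of the top four are outer'' is not even well defined.

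Your ``main obstacle'' paragraph also states the vanishing criterion backwards. The pairing with $P_{<cL}$ vanishes only when one outer frequency dominates the other two by a fixed factor. It is not enough that some outer frequency be $\gtrsim L$. Separately, your case ``three inner and none outer'' is the same configuration as the preceding one, not a new case.

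\textbf{The repair} is to move the threshold by a fixed factor, as the paper does.
\begin{itemize}
\item If $S_2\ge L/64$, the outer kernels alone give $m_{S_1}m_{S_2}\le m_{L/64}^2\le 64^{2(1-s)}m_L^2$.
\item If $S_2<L/64$, there are four frequencies $\ge L$ and only three inner slots, so $S_1\ge L$. Then $|\xi_4-\xi_5+\xi_6|\ge L/2-4L/64=7L/16$. You can insert a smooth cutoff $\eta_L(D)$, equal to one there and vanishing for $|\xi|\le L/4$, in front of $I_N$ on the inner product. Now $\|K_{I_N\eta_L(D)}\|_1\lesssim_s m_L$ together with $m_{S_1}\le m_L$ gives $m_L^2$.
\end{itemize}
With this change the rest of your argument goes through and coincides with the paper's.
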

\begin{proof}
Order the frequencies of the three factors in $F(I_Nu)$ as
$S_1\ge S_2\ge S_3$. We obtain $m_L^2$ either from two of these
factors, or from one of them and $I_N$ acting on $F(u)$.
For each factor, use
$\chi_{S_i}(\xi)=\widetilde\chi(\xi/S_i)$ with the fixed cutoff
$\widetilde\chi$ from Lemma~\ref{lem:quartickernel}. Thus
$I_Nu_{S_i}=I_N\chi_{S_i}(D)u_{S_i}$, and
$\|K_{I_N\chi_{S_i}(D)}\|_1\lesssim_s m_{S_i}$ by
\eqref{eq:smoothkernel}.
If $S_2\ge L/64$, Lemma~\ref{lem:Ikernel} and \eqref{eq:producttreekernel} give
\[
 \|\nu\|_{\TV}
 \lesssim_s\|K_{I_N}\|_1
             \prod_{i=1}^3\|K_{I_N\chi_{S_i}(D)}\|_1
 \lesssim_s m_{S_1}m_{S_2}m_{S_3}\lesssim_s m_L^2.
\]
If $S_2<L/64$, at least four of the six dyadic frequencies are
at least $L$, so
\[
 S_1\ge L,\qquad S_2,S_3<L/64,\qquad
 |\xi_4-\xi_5+\xi_6|\ge L/2-2L/64-2L/64=7L/16.
\]
Choose $\eta_L(\xi)=\eta(\xi/L)$, with a fixed smooth real radial
cutoff $\eta$, such that
\[
 \eta_L(\xi)=0\ (|\xi|\le L/4),\qquad
 \eta_L(\xi)=1\ (|\xi|\ge7L/16).
\]
Write
\[
 V=(I_Nu_{H_4})\overline{I_Nu_{H_5}}(I_Nu_{H_6}),\qquad
 W=u_{H_1}\overline{u_{H_2}}u_{H_3}.
\]
The support bound gives $\eta_L(D)V=V$.
Self-adjointness of $\eta_L(D)$ and commutation with $I_N$ yield
\begin{align*}
 \int\bar V I_NW&=\int\bar V I_N\eta_L(D)W,\\
 \|\nu\|_{\TV}
 &\lesssim_s\|K_{I_N\eta_L(D)}\|_1
               \prod_{i=1}^3\|K_{I_N\chi_{S_i}(D)}\|_1
 \lesssim_s m_Lm_{S_1}m_{S_2}m_{S_3}\lesssim_s m_L^2.
\end{align*}
Writing $I_NW$ or $I_N\eta_L(D)W$ as a convolution translates
all three factors of $W$ by the same vector.
Formula~\eqref{eq:producttreekernel} gives the required representation. The additional cutoffs $P_{\le N/100}$ and $1-P_{\le N/100}$
from Lemma~\ref{lem:energythreshold} have uniformly bounded measure
kernels and preserve this estimate.
\end{proof}

\begin{proposition}\label{prop:sexticvariation}
The sextic term in \eqref{eq:exactenergy} satisfies
\begin{equation}\label{eq:E6bound}
 \int_J|\mathcal E_6(t)|\dd t\lesssim_sB_*\frac{\ell_N}{N^2}.
\end{equation}
\end{proposition}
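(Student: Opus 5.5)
First subtract the completely low contribution: by Lemma~\ref{lem:energythreshold}, $\mathcal E_6(u_{\rm lo})=0$, so $\mathcal E_6(u)=\mathcal E_6(u)-\mathcal E_6(u_{\rm lo})$. I expand this difference multilinearly in $u_{\rm lo},u_{\rm hi}$ and then dyadically. Every nonzero term has two comparable largest frequencies $H\sim H'\ge c_EN$. Order the six frequencies as $H\ge H'\ge M\ge L\ge K\ge J_{\min}$. For each ordering, Lemma~\ref{lem:compressed} writes the localized sextic integral against a time-independent measure of total variation $\lesssim_s m_L^2$. The cutoffs $P_{\le N/100}$ and $1-P_{\le N/100}$ cost only bounded constants, by \eqref{eq:convolutionpair}.

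I estimate each term by a six-factor version of Lemma~\ref{lem:kernel}. Put the two lowest factors, at $K$ and $J_{\min}$, into $L^\infty_{t,x}$; for fixed convolution variables they serve as the bounded weight $q$. The remaining four factors are paired as $(H,L)$ and $(H',M)$. Cauchy--Schwarz in $(t,x)$, as in Lemma~\ref{lem:kernel}, and Lemma~\ref{lem:pair} (with $\BB(c_EN)=B_*$, since $H,H'\ge c_EN$) give
\[
 \int_J|\mathcal E_{6;\vec H}|\lesssim_s m_L^2\,
 \frac{B_*\ell_H}{H^3m_H^2m_Mm_L}\,\theta_{H,M}\theta_{H,L}\,
 \frac{K^{1/2}}{m_K}\frac{J_{\min}^{1/2}}{m_{J_{\min}}},
\]
where the $L^\infty$ bounds come from \eqref{eq:massfreq}.

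Next I sum from the bottom up.
\begin{itemize}
\item By \eqref{eq:sumLinfty}, the sums over $J_{\min}\le K$ and then $K\le L$ give a factor $\lesssim_s L/m_L^2$. This is where the $m_L^2$ from the multiplier is used: $m_L^2\cdot m_L^{-1}\cdot L m_L^{-2}=L/m_L$.
\item Dropping $\theta_{H,L}\le1$ and applying \eqref{eq:sumLinfty}-type geometric sums, the sum of $L/m_L$ over $L\le M$ is $\lesssim_s M/m_M$.
\item The $M$-sum is then $\sum_{M\lesssim H}M\,\theta_{H,M}/m_M^2\lesssim_s H/m_H^2$, again an increasing geometric sum; this is valid for $s<1$ because $M m_M^{-2}$ grows like $M^{3-2s}$ above $N$.
\end{itemize}
The total is therefore
\[
 \int_J|\mathcal E_6|\lesssim_s B_*\sum_{H\gtrsim N}\frac{\ell_H}{H^2m_H^4}.
\]
Above $N$ the summand is $N^{4s-4}H^{2-4s}\ell_H$. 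The exponent $2-4s$ is negative for $s>\frac12$, so the geometric argument of \eqref{eq:deltahigh}, with $\ell_{2^jN}\le\ell_N+Cj$, gives $B_*\ell_N/N^2$. The finitely many dyadics with $c_EN\le H<N$ are handled by \eqref{eq:thetastability}.

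The main obstacle is the book-keeping in the middle sums. I must make sure the weights $m_L^{-1}$ and $m_M^{-1}$ from the bilinear bounds, together with the $L^\infty$ losses $\sqrt K/m_K$, never outgrow the available power of $H$. This requires $m_L^2$ from Lemma~\ref{lem:compressed}, and it must be applied before the pairing is chosen.

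If the naive pairing loses too much when $M\sim H$, I would fall back on pairing $(H,K)$ and $(H',J_{\min})$ with $M,L$ placed in $L^\infty$. That alternative uses \eqref{eq:sumlow} for the $\theta$-weighted sums, which produces at most $\ell_H^3$. The stated single power $\ell_N$ suggests the first pairing is the one that works.

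I also need to check that conjugations and the common translation in $I_NW$ are compatible with the relative translations in $\BB$. They are, since \eqref{eq:producttreekernel} allows an arbitrary non-product measure and Lemma~\ref{lem:kernel} takes the supremum over translations.
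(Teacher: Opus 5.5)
Your proposal is correct and matches the paper's proof. Like the paper, you subtract $\mathcal E_6(u_{\rm lo})=0$, use the $m_L^2$ kernel bound of Lemma~\ref{lem:compressed}, place the factors at $K,J_{\min}$ in $L^\infty_{t,x}$, apply two bilinear bounds with $B_*$, and sum from the bottom up as in \eqref{eq:sixsums} to reach $B_*\sum_{H\gtrsim N}\ell_H/(H^2m_H^4)$. The only difference is cosmetic: you pair $(H,L),(H',M)$ where the paper pairs $(H,M),(H',L)$, which gives the same bound since $H\sim H'$ and the $\theta$ factors are discarded, so your fallback pairing is never needed.
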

\begin{proof}
Subtract $\mathcal E_6(u_{\rm lo})=0$ and use Lemma~\ref{lem:compressed}.
For each fixed set of kernel translations, put the factors at
$K,J_{\min}$ in $L^\infty_{t,x}$ using \eqref{eq:massfreq}, and pair
the other four factors
at $(H,M)$ and $(H',L)$ in $L^2_{t,x}$. Integrate the resulting
bound against $|\nu_{\vec H}|$, using the relative-translation
suprema as in Lemma~\ref{lem:kernel}.
Writing $\mathcal E_{6;\vec H}(t)$ for the resulting dyadic contribution,
where $\vec H=(H,H',M,L,K,J_{\min})$, we obtain
\begin{equation}\label{eq:sixblock}
 \int_J|\mathcal E_{6;\vec H}(t)|\dd t
 \lesssim_s
 \frac{B_*\ell_H}{H^3m_H^2m_Mm_L}
       \frac{\sqrt{KJ_{\min}}}{m_Km_{J_{\min}}}\ m_L^2
 =
 \frac{B_*\ell_H}{H^3m_H^2}
       \frac{\sqrt{KJ_{\min}}\,m_L}{m_Mm_Km_{J_{\min}}}.
\end{equation}
The factors $\theta$ from the bilinear estimates have been bounded
by one. The positive powers of the lower frequencies in
\eqref{eq:sixblock} suffice for summability:
\begin{equation}\label{eq:sixsums}
 \sum_{J_{\min}\le K\le L}\frac{\sqrt{KJ_{\min}}}{m_Km_{J_{\min}}}
 \lesssim_s\frac L{m_L^2},\qquad
 \sum_{L\le M}\frac L{m_L}\lesssim_s\frac M{m_M},\qquad
 \sum_{M\le H}\frac M{m_M^2}\lesssim_s\frac H{m_H^2}.
\end{equation}
Summing \eqref{eq:sixblock} in this order gives
\[
 \int_J|\mathcal E_6|
 \lesssim_sB_*\sum_{H\gtrsim N}\frac{\ell_H}{H^2m_H^4}
 \lesssim_s\frac{B_*}{N^2}
       \sum_{j\ge0}(\ell_N+Cj)2^{-j(4s-2)}
 \lesssim_sB_*\frac{\ell_N}{N^2}.
\]
As in the quartic estimate, the finitely many dyadic frequencies
with $H\gtrsim N$ and $H<N$ are covered by
\eqref{eq:thetastability}.
\end{proof}

\begin{proof}[Proof of Proposition~\ref{prop:energy}]
Combine \eqref{eq:exactenergy}, \eqref{eq:E4bound}, and
\eqref{eq:E6bound}.
\end{proof}

The remaining task is to prove $\BB(c_EN)\lesssim_s1$.

\section{Directional bilinear interaction estimate}\label{sec:flux}
For separated frequencies, we pair the high-frequency mass with a
low-frequency energy density. We derive the required interaction
estimate for forced Schr\"odinger equations, beginning with two models
that explain the velocity gain and its Fourier-space positivity.

The gain from frequency separation can first be seen in a transport
model. Let smooth, rapidly decreasing nonnegative densities on $\R$
satisfy
\[
 \rho_t+v_H\rho_x=S_H,\qquad e_t+v_Le_x=S_L,\qquad v_H>v_L.
\]
Set $q_e^\eta(x)=\int_{y<x-\eta}e(y)\dd y$ and
$q_\rho^\eta(y)=\int_{x>y+\eta}\rho(x)\dd x$. Integration by parts gives
\begin{equation}\label{eq:transportcollision}
 \begin{aligned}
 \mathcal G_\eta(t)&=\iint_{x-y>\eta}\rho(t,x)e(t,y)\dd x\dd y,\\
 \mathcal G_\eta'(t)&=(v_H-v_L)\int_\R\rho(t,y+\eta)e(t,y)\dd y
             +\int q_e^\eta S_H+\int q_\rho^\eta S_L.
 \end{aligned}
\end{equation}
Writing $M=\sup_J\|\rho(t)\|_1$ and $E=\sup_J\|e(t)\|_1$, we have
$0\le\mathcal G_\eta\le ME$. Hence
\[
 (v_H-v_L)\int_J\!\int_\R\rho(t,y+\eta)e(t,y)\dd y\dd t
 \le ME+E\|S_H\|_{L^1(J\times\R)}+
       \int_J\left|\int q_\rho^\eta S_L\right|\dd t.
\]
This has the same structure as Proposition~\ref{prop:flux}. When
$v_H-v_L\sim H$, the velocity gap yields a factor $H^{-1}$.
We retain the weight in the term containing $S_L$ to use its Fourier support.
The shift $\eta$ is fixed: a time-dependent shift replaces the coefficient
$v_H-v_L$ in \eqref{eq:transportcollision} by $v_H-v_L-\eta'(t)$,
which need not be positive.

For the Schr\"odinger equation, the corresponding positivity follows
from Plancherel's identity. Let $f,g$ be Schwartz solutions of
$(\ii\partial_t+\partial_x^2)f=(\ii\partial_t+\partial_x^2)g=0$ on $\R$,
with Fourier supports in $[\gamma H,CH]$ and $[-CK,CK]$, respectively.
Take $\rho=|f|^2$ and $e=|g|^2$ in the definition of
$\mathcal G_\eta$, and set $W_\eta(y)=f(y+\eta)\overline{g(y)}$.
The mass identities and Plancherel give
\begin{equation}\label{eq:modelFourierpositive}
 \mathcal G_\eta'=2\Imn\int\overline{W_\eta}\,\partial_yW_\eta
 =4\pi\int\xi|\widehat{W_\eta}(\xi)|^2\dd\xi
 \ge4\pi(\gamma H-CK)\|W_\eta\|_2^2.
\end{equation}
Here $\supp\widehat{W_\eta}\subset[\gamma H-CK,CH+CK]$. Since
$0\le\mathcal G_\eta\le\|f(0)\|_2^2\|g(0)\|_2^2$, integration gives
\[
 \int_J\|f(t,\cdot+\eta)g(t)\|_2^2\dd t
 \lesssim H^{-1}\|f(0)\|_2^2\|g(0)\|_2^2,\qquad K\ll H.
\]
We apply this calculation to the components of the low-frequency
energy, with the transverse variables as parameters.
Physical-space approaches to bilinear estimates include the bilinear
virial identities of Planchon and Vega \cite[Section~2]{PV}
and the local-smoothing
argument of Tao \cite{TaoBilinear}; see also the density--flux identities
in the one-dimensional work of Ifrim and Tataru \cite{IT23}.

\subsection{Smoothed densities and trace estimates}
The weighted nonlinear estimate in Section~\ref{sec:exterior} requires
a weight with Fourier support much smaller than $L$. We construct it
from $|h|^2$ by smoothing in the longitudinal direction. We choose a
nonnegative kernel of integral one to preserve positivity and mass,
and require its Fourier support to lie in $[-2r,2r]$.
We work with smooth, spatially rapidly decreasing functions on a compact
interval $J$.
Fix a unit vector $\omega$ and a real even $\psi\in\mathcal S(\R)$ with
$\|\psi\|_2=1$, $\psi(0)\ne0$, and
$\supp\widehat\psi\subset[-1,1]$. For $r>0$, set
\begin{equation}\label{eq:densitykernel}
 \begin{gathered}
 k_r(z)=r|\psi(rz)|^2,\qquad
 (k_r*_\omega f)(x)=\int_\R k_r(z)f(x-z\omega)\dd z,\\
 k_r\ge0,\qquad k_r(-z)=k_r(z),\qquad
 \int_\R k_r=1,\qquad
 \supp\widehat{k_r}\subset[-2r,2r].
 \end{gathered}
\end{equation}
The direction $\omega$ and parameter $r$ remain fixed in time.
For $(\ii\partial_t+\Delta)h=G$, define
\begin{equation}\label{eq:smootheddensity}
 \rho_h=k_r*_\omega|h|^2,\qquad
 J_h=k_r*_\omega\Imn(\bar h\nabla h),\qquad
 s_h=2k_r*_\omega\Imn(\bar hG).
\end{equation}

Convolving the mass identity
\begin{equation}\label{eq:canonicalmass}
 \partial_t|h|^2+2\nabla\cdot\Imn(\bar h\nabla h)
 =2\Imn(\bar hG)
\end{equation}
with $k_r$ and using $\|k_r\|_1=1$, we obtain
\begin{equation}\label{eq:densitymass}
 \partial_t\rho_h+2\nabla\cdot J_h=s_h,\qquad
 \int\rho_h=\|h\|_2^2,\qquad
 \|s_h\|_{L^1_{t,x}}\le2\|hG\|_{L^1_{t,x}}.
\end{equation}

Two elementary estimates recover the product of the original functions
from the smoothed interaction. The first controls the loss from
smoothing. The second bounds a low-frequency function on each
transverse plane by its energy; the logarithm comes from the
two-dimensional Fourier integral.
\begin{lemma}\label{lem:positive}\label{lem:trace}
If $\supp\widehat b(t)\subset B(0,CK)$ for every $t\in J$, then
\begin{equation}\label{eq:productpositive}
 |b|^2\lesssim(1+K/r)(k_r*_\omega|b|^2),\qquad
 \|hb\|_{L^2_{t,x}}^2
 \lesssim(1+K/r)\int_J\!\int\rho_h|b|^2.
\end{equation}
If $\supp\widehat w(t)\subset B(0,2L)$ for every $t\in J$ and $a>0$, then
\begin{equation}\label{eq:transversetrace}
 \sup_{y\in\omega^\perp}|w(\rho\omega+y)|^2
 \lesssim\log(2+L/a)
       \int_{\omega^\perp}(a^2|w|^2+|\nabla_\perp w|^2)(\rho\omega+y)\dd y.
\end{equation}
\end{lemma}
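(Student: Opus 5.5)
For the first estimate I will work on each line parallel to $\omega$. Write $x=z\omega+y$ with $y\in\omega^\perp$, and regard $b$ as a function of $z$ with $y$ and $t$ as parameters. Its one-dimensional Fourier transform in $z$ is supported in $[-CK,CK]$, because the restriction of a function with Fourier support in $B(0,CK)$ to a line has frequencies there. Set $R=\max(r,K)$. By the Fourier support, $b=\phi_R*b$ in the $z$ variable, where $\phi_R(z)=R\phi(Rz)$ is a Schwartz function whose transform equals one on $[-C,C]$. Cauchy--Schwarz then gives
\[
 |b(z)|^2\le\Bigl(\int|\phi_R(z-z')|\dd z'\Bigr)\int|\phi_R(z-z')||b(z')|^2\dd z'.
\]
The first factor is $O(1)$, so it remains to compare the kernel with $k_r$. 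Since $\psi(0)\ne0$ and $\psi$ is continuous, $|\psi|^2\ge c$ on $[-\delta,\delta]$, hence $k_r\ge cr$ on $|z|\le\delta/r$. I do not control $k_r$ from below globally, so a pointwise comparison of kernels is not enough. Instead I cover $\R$ by intervals of length $\delta/r$ and use $|\phi_R(z)|\lesssim R(1+R|z|)^{-10}$. This gives $|\phi_R(z-z')|\lesssim(R/r)\sum_j w_j\,k_r(z-z'-j\delta/r)$ with summable weights $w_j$.

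That representation still produces translates of $k_r*_\omega|b|^2$, not the function itself at the point, so I will avoid the pointwise comparison and change tools. First note that $b\,\overline{b}$ has longitudinal Fourier support in $[-2CK,2CK]$. For $r\gtrsim K$, $|b|^2$ has frequencies at most about $r$, so $k_r*_\omega|b|^2$ is controlled from below by a local average of $|b|^2$, and $|b|^2$ in turn is bounded by its local average by Bernstein. Concretely, for $r\ge K$ the function $|b(\cdot)|$ varies little on scale $1/r$: by Bernstein, $\|\partial_z b\|_\infty\lesssim K\sup|b|$ in a localized form. More precisely, Taylor expansion with the local maximal bound $\sup_{|z'-z|\le\delta/r}|b(z')|$ controlled by a weighted average, together with $k_r\ge cr$ on $[-\delta/r,\delta/r]$, gives
\[
 k_r*|b|^2(z)\ge c\,r\int_{|z'|\le\delta/r}|b(z-z')|^2\dd z'\gtrsim|b(z)|^2
\]
when $\delta$ is small compared with $r/K$.

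For $r<K$ I will reduce to the previous case by splitting the interval $[-\delta/K,\delta/K]$ into pieces. Where $|\psi(rz)|^2$ is bounded below on the larger window $|z|\le\delta/r$, I lose the ratio of window lengths, which is the factor $K/r$. This reverse-Bernstein step, namely a lower bound for $|b|^2$ averaged over an interval of length comparable to $1/K$ by $|b(z)|^2$, is the main obstacle. I would prove it by the standard "$b$ is nearly constant on $1/K$ scale" argument: write $b(z')-b(z)=\int_z^{z'}\partial b$ and bound $\partial b$ by $K$ times the Schwartz-tail maximal function $\sup_{z''}|b(z'')|(1+K|z-z''|)^{-10}$. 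That maximal quantity is controlled by the $L^2$ average against $K(1+K|\cdot|)^{-20}$ through the reproducing formula $b=\phi_K*b$. A short bootstrap in the maximal function then closes the argument.

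The second inequality in \eqref{eq:productpositive} follows by multiplying the pointwise bound by $|h|^2$ and integrating. Since $k_r$ is even, Fubini gives $\int|h|^2(k_r*_\omega|b|^2)=\int\rho_h|b|^2$.

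For \eqref{eq:transversetrace} I fix $\rho$ and use the two-dimensional Fourier transform in $y\in\omega^\perp$. The restriction $w(\rho\omega+\cdot)$ has Fourier support in the disc of radius $2L$. By Fourier inversion and Cauchy--Schwarz,
\[
 \sup_y|w|^2\le\Bigl(\int_{|\eta|\le2L}\frac{\dd\eta}{a^2+4\pi^2|\eta|^2}\Bigr)\int(a^2+4\pi^2|\eta|^2)|\widehat w(\eta)|^2\dd\eta .
\]
The first integral is $\lesssim\log(2+L/a)$, and Plancherel turns the second factor into $\int(a^2|w|^2+|\nabla_\perp w|^2)$.
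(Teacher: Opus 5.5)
Your proof of \eqref{eq:transversetrace} is the paper's argument, and deducing the second inequality in \eqref{eq:productpositive} from the pointwise one via evenness of $k_r$ and Fubini is correct. The gap is in the pointwise bound $|b|^2\lesssim(1+K/r)\,k_r*_\omega|b|^2$. Your chain passes through the local reverse inequality $|b(z)|^2\lesssim r\int_{|z'|\le\delta/r}|b(z-z')|^2\dd z'$ (and, for $r<K$, its analogue on a window of length $\sim1/K$). No such inequality holds uniformly over $b$ with $\supp\widehat b\subset[-CK,CK]$, whatever the window length.

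To see this, take a polynomial $p$, a small $\epsilon>0$, and $\operatorname{sinc}x=\sin(\pi x)/(\pi x)$. The function $p(z)\operatorname{sinc}(\epsilon z)^{\deg p+1}$ lies in $L^2(\R)$ and has Fourier support in $[-(\deg p+1)\epsilon,(\deg p+1)\epsilon]$. It converges to $p$ uniformly on any compact interval $I$ as $\epsilon\to0$. By Weierstrass, band-limited functions restricted to $I$ are therefore dense in $C(I)$. So you can take $b$ close to a narrow bump, with $|b(z)|\approx1$ but $\int_I|b|^2$ arbitrarily small.

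The Taylor/maximal-function bootstrap cannot rescue this. The quantity $\sup_{z''}|b(z'')|(1+K|z-z''|)^{-10}$ is controlled only by an average of $|b|^2$ with polynomial tails over the whole line. That average is not dominated by $k_r*_\omega|b|^2(z)$, since $k_r$ decays faster than any polynomial and may vanish. This is exactly the missing global lower bound you noticed yourself.

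The missing idea is to use that $k_r$ is the square of a band-limited function. For fixed $x$, set $B_x(z)=r^{1/2}\psi(rz)\,b(x-z\omega)$. Because $\supp\widehat\psi\subset[-1,1]$, $B_x$ has Fourier support in $[-(r+CK),r+CK]$. The one-dimensional Bernstein inequality (Cauchy--Schwarz on Fourier inversion) at $z=0$ then gives
\[
 r|\psi(0)|^2|b(x)|^2=|B_x(0)|^2\le2(r+CK)\|B_x\|_{L^2(\R)}^2=2(r+CK)\,(k_r*_\omega|b|^2)(x).
\]
Dividing by $r|\psi(0)|^2$ gives the claim in both regimes $r\ge K$ and $r<K$ at once. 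This argument works along the whole line and uses the Fourier support of $\psi$, which plays no role in your proposal. Any argument that bounds $|b(z)|^2$ by an integral of $|b|^2$ over a bounded window, as yours does, cannot succeed.
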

\begin{proof}
For fixed $x$, the function
$B_x(z)=r^{\frac{1}{2}}\psi(rz)b(x-z\omega)$ has Fourier support in
$[-r-CK,r+CK]$. The one-dimensional Bernstein inequality gives
\[
 r|\psi(0)|^2|b(x)|^2=|B_x(0)|^2
 \lesssim(K+r)\|B_x\|_2^2
 =(K+r)(k_r*_\omega|b|^2)(x).
\]
The second inequality in \eqref{eq:productpositive} follows by Fubini
and the evenness of $k_r$. For \eqref{eq:transversetrace}, apply
Cauchy--Schwarz to transverse Fourier inversion with weight
$a^2+|\zeta|^2$, using
\[
 \int_{|\zeta|\le2L}\frac{\dd\zeta}{a^2+|\zeta|^2}
 =\pi\log(1+4L^2/a^2)\lesssim\log(2+L/a).
 \qedhere
\]
\end{proof}

\subsection{The low-frequency energy identity}
Let $P=P_{\le L}=\chi(D/L)$ and consider
\begin{equation}\label{eq:abstractforced}
 (\ii\partial_t+\Delta)h=G,\qquad
 (\ii\partial_t+\Delta)w=PF(v)+R,\qquad v=Pw,
\end{equation}
where $\gamma,C_h>0$ are fixed and the following spatial Fourier support
conditions hold for every $t\in J$:
\begin{equation}\label{eq:abstractsupports}
 \supp\widehat h\subset\{\xi\cdot\omega\ge\gamma H,\ |\xi|\le C_hH\},
 \qquad
 \supp\widehat w,\supp\widehat R\subset B(0,2L),\qquad r>0.
\end{equation}
To estimate a product with $h$, we use an auxiliary low-frequency
energy density. For $a>0$, set
\begin{equation}\label{eq:abstractenergy}
\begin{aligned}
 e&=a^2|w|^2+|\nabla w|^2+\frac{1}{2}|v|^4,\\
 j_w&=a^2\Imn(\bar w\nabla w)
 +\sum_{j=1}^3\Imn(\partial_j\bar w\,\nabla\partial_jw)
 +|v|^2\Imn(\bar v\nabla v),\\
 E_w&=\sup_J\int e,\qquad M_h=\sup_J\|h\|_2^2.
\end{aligned}
\end{equation}
This density is used only in the interaction estimate; the modified
energy remains $E(I_Nu)$. When we differentiate $e$, the local
nonlinear contributions from $|v|^4/2$ and $|\nabla w|^2$ cancel,
leaving commutators and the contribution of $R$.

In the application $w=P_{\le L}u$, with $u$ satisfying
\eqref{eq:boot}, we take $a=\mu^{-1}$; then
$a^2\|w\|_2^2\lesssim1$. Together with the
gradient and potential terms, this gives a low-frequency energy bound
without a positive power of $\mu$; see Lemma~\ref{lem:actualEL}.

For the positivity argument, write
\begin{equation}\label{eq:energyvector}
 \begin{gathered}
 Z_0=aw,\qquad Z_j=\partial_jw\ (1\le j\le3),\qquad Z_4=v^2/\sqrt2,\\
 e=\sum_{\alpha=0}^4|Z_\alpha|^2,\qquad
 j_w=\sum_{\alpha=0}^4\Imn(\bar Z_\alpha\nabla Z_\alpha).
 \end{gathered}
\end{equation}
Each $Z_\alpha$ has Fourier support in $B(0,4L)$.
We write $[A,B]=AB-BA$, with scalar weights acting by multiplication.

\begin{lemma}\label{lem:lowbalance}
For each fixed time and every bounded real smooth weight $q=q(t,x)$,
\begin{equation}\label{eq:lowbalance}
 \int q(e_t+2\nabla\cdot j_w)=\mathcal I_q(w)+\DD_q(w,R),
\end{equation}
where
\begin{align}
 \mathcal I_q(w)
 &=2a^2\Imn\int[P,q]\bar wF(v)
 +2\Imn\sum_j\int[P,q]\partial_j\bar w\,\partial_jF(v)
 +\frac1\ii\ip{F(v)}{[q,P^2]F(v)},\label{eq:internalformula}\\
 \DD_q(w,R)
 &=2\Imn\int q\left(a^2\bar wR+\nabla\bar w\cdot\nabla R+
                       \overline{F(v)}PR\right).\label{eq:abstractwork}
\end{align}
\end{lemma}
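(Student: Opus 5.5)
The plan is to prove \eqref{eq:lowbalance} by writing each component $Z_\alpha$ from \eqref{eq:energyvector} as a solution of a forced Schr\"odinger equation and using the pointwise mass identity \eqref{eq:canonicalmass} for each one. Summing over $\alpha$ and pairing with $q$ should produce exactly the three commutators in \eqref{eq:internalformula}, the $R$-terms in \eqref{eq:abstractwork}, and a pair of local nonlinear terms that cancel pointwise.

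\textbf{Step 1: forcing terms.} From \eqref{eq:abstractforced}, with $\Phi=PF(v)+R$ and $v=Pw$, I would record
\[
 (\ii\partial_t+\Delta)Z_0=a\Phi,\qquad
 (\ii\partial_t+\Delta)Z_j=\partial_j\Phi,
\]
\[
 (\ii\partial_t+\Delta)Z_4=\sqrt2\,v\,(P^2F(v)+PR)+\sqrt2\,(\nabla v\cdot\nabla v).
\]
The last equation uses $(\ii\partial_t+\Delta)(v^2)=2v(\ii\partial_t+\Delta)v+2\nabla v\cdot\nabla v$ and $(\ii\partial_t+\Delta)v=P\Phi$. Applying \eqref{eq:canonicalmass} to each $Z_\alpha$ and summing with \eqref{eq:energyvector} gives
\[
 e_t+2\nabla\cdot j_w=2\Imn\Bigl(a^2\bar w\Phi+\sum_j\partial_j\bar w\,\partial_j\Phi
 +|v|^2\bar v(P^2F(v)+PR)+\bar v^2(\nabla v)^2\Bigr).
\]
I would also check that $\Imn(\bar Z_4\nabla Z_4)=|v|^2\Imn(\bar v\nabla v)$, so that the flux agrees with $j_w$ in \eqref{eq:abstractenergy}.

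\textbf{Step 2: the $R$-terms.} All terms containing $R$ give exactly $\DD_q(w,R)$, since $|v|^2\bar v=\overline{F(v)}$.

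\textbf{Step 3: the $F(v)$-terms.} Multiply by $q$, integrate, and use that $P$ is real and self-adjoint, so $\int q\bar f\,Pg=\int \overline{P(qf)}\,g$.
\begin{itemize}[leftmargin=*]
\item The $Z_0$ term gives $2a^2\Imn\int q\bar v F(v)$ plus the commutator $2a^2\Imn\int[P,q]\bar wF(v)$. The first piece vanishes because $\bar vF(v)=|v|^4$ is real.
\item Similarly, the $Z_j$ terms give $2\Imn\int q\,\partial_j\bar v\,\partial_jF(v)$ plus the second commutator in \eqref{eq:internalformula}.
\item The $Z_4$ potential term is $2\Imn\int q\overline{F(v)}P^2F(v)=2\Imn\ip{F(v)}{qP^2F(v)}$. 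Since $\ip{F(v)}{P^2qF(v)}$ equals the same quantity with $q$ moved to the other side, it is real in the appropriate sense. Hence this term equals $\frac1\ii\ip{F(v)}{[q,P^2]F(v)}$.
\end{itemize}

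\textbf{Step 4: the local cancellation, where I expect the only subtlety.} Using $\partial_jF(v)=2|v|^2\partial_jv+v^2\partial_j\bar v$, I get
\[
 \Imn\sum_j\partial_j\bar v\,\partial_jF(v)=\Imn\bigl(v^2(\nabla\bar v)^2\bigr)=-\Imn\bigl(\bar v^2(\nabla v)^2\bigr).
\]
This cancels pointwise the extra term $2\Imn(\bar v^2(\nabla v)^2)$ from Step 1. The cancellation holds before multiplying by $q$, so no derivatives fall on $q$.

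All manipulations are justified for smooth rapidly decreasing functions at each fixed time, with $q$ bounded and smooth. The main care needed is bookkeeping of conjugations and of the adjoint convention in the commutator terms. There, $P^2\ne P$ must be kept as in \eqref{eq:internalformula}, since the potential is built from $v=Pw$ and not from $w$.
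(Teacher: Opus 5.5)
Your proof is correct and is essentially the paper's argument. The paper likewise applies the mass identity to $aw$ and $\partial_j w$, and uses self-adjointness of $P$ to split off $q\bar v$ and $q\partial_j\bar v$ from the commutators. It then verifies $\partial_t(\varrho^2/2)+2\nabla\cdot(\varrho j_v)=2\nabla\varrho\cdot j_v+2\Imn(\overline{F(v)}[P^2F(v)+PR])$, with $\varrho=|v|^2$ and $j_v=\Imn(\bar v\nabla v)$. Here $2\nabla\varrho\cdot j_v=2\Imn(\bar v^2(\nabla v)^2)$ is exactly your extra $Z_4$ forcing term, which cancels $2\Imn(\nabla\bar v\cdot\nabla F(v))$.

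The one phrase to tighten is in Step 3. What you need is $\ip{F(v)}{P^2qF(v)}=\overline{\ip{F(v)}{qP^2F(v)}}$, from which $2\Imn\ip{F(v)}{qP^2F(v)}=\ii^{-1}\ip{F(v)}{[q,P^2]F(v)}$ follows immediately.
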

\begin{proof}
Applying $P$ to \eqref{eq:abstractforced} gives
$(\ii\partial_t+\Delta)v=P^2F(v)+PR$. Write
$\varrho=|v|^2$ and $j_v=\Imn(\bar v\nabla v)$. In the weighted
mass and gradient identities, self-adjointness of $P$ gives
\[
 P(q\bar w)=q\bar v+[P,q]\bar w,\qquad
 P(q\partial_j\bar w)=q\partial_j\bar v+[P,q]\partial_j\bar w.
\]
The terms without commutators satisfy
\begin{align*}
 \Imn(\bar vF(v))&=0,\qquad
 2\Imn(\nabla\bar v\cdot\nabla F(v))=-2\nabla\varrho\cdot j_v,\\
 \partial_t(\varrho^2/2)+2\nabla\cdot(\varrho j_v)
 &=2\nabla\varrho\cdot j_v+
 2\Imn\!\left(\overline{F(v)}[P^2F(v)+PR]\right).
\end{align*}
Thus the local terms from the gradient and potential energies cancel.
Finally,
\[
 2\Imn\ip{F(v)}{qP^2F(v)}
 =\ii^{-1}\ip{F(v)}{[q,P^2]F(v)},
\]
which yields \eqref{eq:lowbalance}.
\end{proof}

\subsection{The half-space interaction}
We integrate the density in \eqref{eq:smootheddensity} over the
transverse variables and set
\begin{equation}\label{eq:marginals}
 n_r(t,\rho)=\int_{\omega^\perp}\rho_h(t,\rho\omega+y)\dd y
            =k_r*n_h(t,\rho),\qquad
 n_h(t,\rho)=\int_{\omega^\perp}|h(t,\rho\omega+y)|^2\dd y.
\end{equation}
For fixed $\eta\in\R$, define
\begin{equation}\label{eq:halfweights}
 q_r^\eta(t,y)=\int_{y\cdot\omega+\eta}^{\infty}n_r(t,\rho)\dd\rho,\qquad
 q_e^\eta(t,x)=\int_{y\cdot\omega<x\cdot\omega-\eta}e(t,y)\dd y.
\end{equation}
Then
\begin{equation}\label{eq:weightbandwidth}
 0\le q_r^\eta\le M_h,\quad 0\le q_e^\eta\le E_w,\quad
 \|n_r\|_\infty\lesssim rM_h,\quad
 \supp\widehat{q_r^\eta}\subset\{\zeta\omega:|\zeta|\le2r\}.
\end{equation}
We also write $q_r^\eta(t,\rho)$ when $\rho=x\cdot\omega$.
Then $\partial_\rho q_r^\eta(t,\rho)=-n_r(t,\rho+\eta)$ has Fourier
support in $[-2r,2r]$. Since the weight is independent of the transverse
variables, its spatial Fourier support is the segment in
\eqref{eq:weightbandwidth}, in the sense of distributions. Moreover,
$\|\partial_\rho^j q_r^\eta\|_\infty\le C_{\psi,j}r^jM_h$
for $j\ge1$.

Set
\begin{align}
 \mathcal Q_\eta(t)
 &=\iint_{(x-y)\cdot\omega>\eta}\rho_h(t,x)e(t,y)\dd x\dd y,
       &0\le\mathcal Q_\eta\le M_hE_w,\label{eq:relativeaction}\\
 C_\eta(t)
 &=\int_\R n_r(t,\rho+\eta)
       \int_{\omega^\perp}e(t,\rho\omega+y)\dd y\dd\rho,
       &\CC=\sup_\eta\int_J C_\eta(t)\dd t<\infty.\label{eq:fluxC}
\end{align}
Indeed, $\CC\lesssim rM_hE_w|J|$.

\begin{proposition}[Forced directional interaction estimate]\label{prop:flux}
Under \eqref{eq:abstractforced}--\eqref{eq:abstractsupports}, assume
\begin{equation}\label{eq:fluxseparation}
 H\ge C_{\chi,\gamma}(L+E_w).
\end{equation}
Then
\begin{equation}\label{eq:forcedflux}
 H\CC\lesssim_{\chi,\gamma}
 M_hE_w+E_w\|hG\|_{L^1_{t,x}}
 +\sup_\eta\int_J|\DD_{q_r^\eta}(w,R)|\dd t.
\end{equation}
For a Fourier multiplier $T_K$ whose symbol is supported in $B(0,CK)$
and whose kernel satisfies $\|K_{T_K}\|_1\le C_T$ uniformly in $K$,
where $K\le L/2$,
\begin{equation}\label{eq:fluxproduct}
 \|h(t,x+x_0)T_Kw(t,x)\|_{L^2_{t,x}}^2
 \lesssim_{C,C_T}(1+K/r)\log(2+L/a)\CC,\qquad x_0\in\R^3.
\end{equation}
\end{proposition}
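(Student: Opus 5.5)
We differentiate the half-space interaction $\mathcal Q_\eta$ of \eqref{eq:relativeaction} in time, integrate over $J$, and use the endpoint bound $0\le\mathcal Q_\eta\le M_hE_w$.

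\textbf{Derivative of $\mathcal Q_\eta$.} We write $\mathcal Q_\eta(t)=\int\rho_h\,q_e^\eta\,dx=\int e\,q_r^\eta\,dy$, with the weights from \eqref{eq:halfweights}. By \eqref{eq:densitymass} and Lemma~\ref{lem:lowbalance} with $q=q_r^\eta$,
\[
 \mathcal Q_\eta'=\int q_e^\eta\,(s_h-2\nabla\cdot J_h)+\int q_r^\eta\,(-2\nabla\cdot j_w)+\mathcal I_{q_r^\eta}(w)+\DD_{q_r^\eta}(w,R).
\]
Integrating the transport terms by parts gives $\nabla q_e^\eta=\omega\int_{\omega^\perp}e(\cdot)$ evaluated at the shifted plane, and $\nabla q_r^\eta=-\omega\, n_r(\cdot+\eta)$. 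Hence the transport part equals
\[
 2\iint_{x\cdot\omega-y\cdot\omega=\eta}\bigl[\omega\cdot J_h(x)\,e(y)-\rho_h(x)\,\omega\cdot j_w(y)\bigr],
\]
integrated over the transverse variables. The source term is bounded by $\int|q_e^\eta s_h|\le 2E_w\|hG\|_{L^1}$ after time integration. The $\DD$ term is kept as it stands.

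\textbf{Positivity.} This is the main step, and it follows the model \eqref{eq:modelFourierpositive}. For fixed transverse points $y,y'\in\omega^\perp$ and each $\alpha$ in \eqref{eq:energyvector}, we set $W_\alpha(\rho)=h(\rho\omega+\eta\omega+y')\overline{Z_\alpha(\rho\omega+y)}$, convolved with $r^{1/2}\psi(r\cdot)$ in the $\omega$-variable through the factorization $k_r=|r^{1/2}\psi(r\cdot)|^2$. The transport integrand then becomes
\[
 \sum_\alpha 2\Imn\int\overline{W}\partial_\rho W=4\pi\sum_\alpha\int\zeta\,|\widehat W(\zeta)|^2\,d\zeta,
\]
exactly as in the model. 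The Fourier support of this $W$ lies in $\zeta\ge\gamma H-4L-r$, because $h$ has $\xi\cdot\omega\ge\gamma H$ while $Z_\alpha$ and $\psi_r$ have small support. Since $r\lesssim L$, \eqref{eq:fluxseparation} gives $\zeta\gtrsim H$. By Plancherel and the identity $\int_\R |(\psi_r *\cdot)|^2$ structure, $\sum_\alpha\|W_\alpha\|_2^2$ reproduces $n_r(\rho+\eta)\int e$, so the transport term is at least $cH\,C_\eta(t)$. The point requiring care is that $\rho_h$ is a smoothed product while $J_h$ carries the same kernel. We therefore perform the smoothing as an outer average over the translation $z$ with weight $k_r(z)$. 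For each fixed $z$ the Plancherel argument applies to the translated $h$, whose Fourier support is unchanged, and no bandwidth loss from $r$ arises at all.

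\textbf{Commutator terms.} The term $\mathcal I_{q_r^\eta}$ involves $[P,q]$ and $[q,P^2]$. Since $q$ varies on scale $r^{-1}\gtrsim L^{-1}$, we write $[P,q]f$ through the kernel of $P$ and expand $q(x)-q(x')$ by the mean value theorem. The bound $|\partial_\rho q|\le n_r(\cdot+\eta)$ then gives
\[
 |\mathcal I_q|\lesssim L^{-1}\int \tilde n_r\,\bigl(|w||F(v)|\,a^2+|\nabla w||\nabla F(v)|+|F(v)|^2\bigr),
\]
where $\tilde n_r$ is a slightly spread version of $n_r$ (higher derivatives of $q$ carry powers of $r$). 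By Bernstein and the transverse trace \eqref{eq:transversetrace}, $|v|^4\lesssim\|v\|_\infty^2|v|^2$ with $\|v\|_\infty^2\lesssim E_w$ up to the constants, so these contributions are $\lesssim (L+E_w)\,C_{\eta'}$ averaged over nearby shifts $\eta'$. They are absorbed into $cH\CC$ by \eqref{eq:fluxseparation} after taking the supremum over $\eta$. Integrating in time and using $|\mathcal Q_\eta(t_1)-\mathcal Q_\eta(t_0)|\le M_hE_w$ then yields \eqref{eq:forcedflux}.

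\textbf{Product bound \eqref{eq:fluxproduct}.} Let $b=T_Kw$ translated by $x_0$. By \eqref{eq:productpositive}, $\|hb\|^2\lesssim(1+K/r)\int\rho_h|b|^2$. On each plane we have $\sup_y|b|^2\le C_T^2\sup|w|^2$ on neighboring planes, since $\|K_{T_K}\|_1\le C_T$ lets the plane-wise bound be averaged. Applying \eqref{eq:transversetrace} bounds this by $\log(2+L/a)\int_{\omega^\perp}e$. Integrating against $n_r$ gives $C_\eta$ for the shift $\eta=x_0\cdot\omega$, and hence the bound by $\CC$. The main obstacle is the positivity step with the nonlinear component $Z_4=v^2/\sqrt2$ and the commutator absorption, where the cancellation from Lemma~\ref{lem:lowbalance} must be used exactly.
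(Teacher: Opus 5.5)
Your proposal follows the paper's proof essentially step by step: you differentiate $\mathcal Q_\eta$ via \eqref{eq:densitymass} and Lemma~\ref{lem:lowbalance}, and you prove Plancherel positivity with the $k_r$-smoothing taken as an outer average over translations $z$ (your final version, not the $\psi_r$-convolved $W$ you first describe), which gives $4\pi(\gamma H-4L)C_\eta$ with no loss in $r$; you then expand the commutators to first order with Cauchy--Schwarz into shift-supremum weighted norms, absorb them using \eqref{eq:fluxseparation}, and conclude with \eqref{eq:productpositive}, the pointwise Cauchy--Schwarz bound for $T_K$, and the transverse trace. One bookkeeping correction: the bound is $\|v\|_\infty^2\lesssim LE_w$, not $E_w$ (and it needs no trace estimate). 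This factor $L$ is cancelled by the $L^{-1}$ moment of the kernels of $P$ and $P^2$, so the commutator terms are in fact $\lesssim E_w\CC$, and the absorption, which also uses $\CC<\infty$, goes through as you claim.
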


The principal term is positive by the same Fourier calculation as in
\eqref{eq:modelFourierpositive}. We prove this first, then estimate
the commutators in the low-frequency energy identity.

\begin{lemma}\label{lem:relative}
We have
\begin{equation}\label{eq:Qprime}
 \mathcal Q_\eta'
 =\mathcal P_\eta+\int q_e^\eta s_h
             +\mathcal I_{q_r^\eta}(w)+\DD_{q_r^\eta}(w,R),
\end{equation}
where
\begin{equation}\label{eq:principalpositive}
 \mathcal P_\eta
 =2\iint\delta((x-y)\cdot\omega-\eta)
       [J_h(x)\cdot\omega\,e(y)-\rho_h(x)j_w(y)\cdot\omega]
 \ge4\pi(\gamma H-4L)C_\eta(t).
\end{equation}
\end{lemma}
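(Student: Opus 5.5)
We differentiate $\mathcal Q_\eta$ in time and split the result into the four terms of \eqref{eq:Qprime}; then we prove the lower bound for $\mathcal P_\eta$ by a one-dimensional Plancherel argument, in the spirit of \eqref{eq:modelFourierpositive}.

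\textbf{Step 1: the derivative identity.} Write $\mathcal Q_\eta(t)=\int\rho_h\,q_e^\eta\,\dd x=\int e\,q_r^\eta\,\dd y$. The symmetry $k_r(-z)=k_r(z)$ lets us move the smoothing from $\rho_h$ onto the half-space indicator, which gives the second expression. We then differentiate the double integral and substitute the balance laws for both densities. For the high-frequency density, \eqref{eq:densitymass} gives $\partial_t\rho_h=-2\nabla\cdot J_h+s_h$. For the low-frequency density we write $e_t=-2\nabla\cdot j_w+(e_t+2\nabla\cdot j_w)$.

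The source $s_h$ produces $\int q_e^\eta s_h$. By Lemma~\ref{lem:lowbalance} with $q=q_r^\eta$, which is real, bounded and smooth, the remainder $\int q_r^\eta(e_t+2\nabla\cdot j_w)$ equals $\mathcal I_{q_r^\eta}(w)+\DD_{q_r^\eta}(w,R)$. For the two divergence terms we integrate by parts. The gradient of the indicator $\mathbf 1_{(x-y)\cdot\omega>\eta}$ is $\pm\omega\,\delta((x-y)\cdot\omega-\eta)$, so these terms give exactly $\mathcal P_\eta$ in \eqref{eq:principalpositive}. Boundary terms vanish by rapid decay, and we justify the derivative under the integral for smooth, decaying $h,w$ on compact $J$.

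\textbf{Step 2: positivity of $\mathcal P_\eta$.} Write $x=\rho\omega+x'$ and $y=\sigma\omega+y'$; the delta forces $\rho=\sigma+\eta$. Then
\[
\mathcal P_\eta=2\int_\R\Bigl[\bigl(\textstyle\int_{\omega^\perp}J_h\cdot\omega\bigr)(\sigma+\eta)\,\bigl(\textstyle\int_{\omega^\perp}e\bigr)(\sigma)-n_r(\sigma+\eta)\bigl(\textstyle\int_{\omega^\perp}j_w\cdot\omega\bigr)(\sigma)\Bigr]\dd\sigma .
\]
By \eqref{eq:energyvector} we have $e=\sum_\alpha|Z_\alpha|^2$ and $j_w=\sum_\alpha\Imn(\bar Z_\alpha\nabla Z_\alpha)$. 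Both $J_h$ and $n_r$ are $k_r$-convolutions in the $\omega$ direction. We therefore unfold everything into integrals over $z\in\R$, $x'\in\omega^\perp$ and $y'\in\omega^\perp$ of expressions built from
\[
W(\sigma)=h\bigl((\sigma+\eta-z)\omega+x'\bigr)\,\overline{Z_\alpha(\sigma\omega+y')}.
\]
Here $z$, $x'$, $y'$ and $\alpha$ are parameters. For each fixed choice of parameters, the integrand becomes $\Imn\int\overline{W}\partial_\sigma W\,\dd\sigma$, exactly as in \eqref{eq:modelFourierpositive}. The factor $\Imn(\bar h\,\omega\cdot\nabla h)\,|Z|^2$ minus $|h|^2\Imn(\bar Z\,\omega\cdot\nabla Z)$ is precisely $\Imn(\overline W\partial_\sigma W)$, since the product rule gives $\partial_\sigma W=(\partial_\omega h)\bar Z+h\,\overline{\partial_\omega Z}$.

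By Plancherel, $\Imn\int\overline W\partial_\sigma W=2\pi\int\xi|\widehat W(\xi)|^2\dd\xi$. The one-dimensional restriction of $h$ along $\omega$, after its transverse Fourier transform, has frequencies $\xi\cdot\omega\ge\gamma H$. This needs care: fixing $x'$ rather than integrating it out is not frequency-preserving, so we first apply transverse Plancherel in $x'$ to $\int\Imn(\bar h\partial_\omega h)\dd x'$, which reduces it to a frequency integral with $\xi\cdot\omega\ge\gamma H$. Meanwhile $Z_\alpha$ has frequencies in $B(0,4L)$, so $\widehat W$ is supported in $\xi\ge\gamma H-4L$. Hence each slice contributes at least $2\pi(\gamma H-4L)\|W\|_2^2$. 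With the factor $2$ in front, summing over $\alpha$ and integrating over $z$ with weight $k_r(z)$ and over $x',y'$ reconstructs $4\pi(\gamma H-4L)C_\eta(t)$.

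\textbf{Main obstacle.} The delicate point is the bookkeeping in this unfolding. The high-frequency factor must be treated after integrating out its own transverse variable, so that the longitudinal frequency support survives. The smoothing $k_r$ must act only as a nonnegative average and must not alter the Fourier support. Concretely, we write $\int_{\omega^\perp}\Imn(\bar h\partial_\omega h)(\rho\omega+x')\,\dd x'$ via the partial Fourier transform in $x'$ as $\int\Imn(\overline{\tilde h_\zeta}\partial_\rho\tilde h_\zeta)\,\dd\zeta$, where each $\tilde h_\zeta(\rho)$ has one-dimensional Fourier support in $[\gamma H,\infty)$. We do the same for $n_h$, and then pair each $\tilde h_\zeta(\cdot-z)$ with each $Z_\alpha(\cdot\,\omega+y')$. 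Each pair then has the model structure of \eqref{eq:modelFourierpositive}.
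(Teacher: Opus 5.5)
Your proposal is correct and follows the paper's proof essentially step for step. The derivative identity comes from the two balance laws, Lemma~\ref{lem:lowbalance} with $q=q_r^\eta$, and integration by parts against the half-space indicator. Positivity then comes from Plancherel in the longitudinal variable applied to $W=h\,\overline{Z_\alpha}$ on parallel lines, summed over $\alpha$ and averaged against $k_r(z)\dd z$.

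The only difference is your transverse Fourier detour, which is unnecessary. The restriction $\rho\mapsto h(\rho\omega+x')$ has one-dimensional Fourier transform $\tau\mapsto\int_{\omega^\perp}\widehat h(\tau\omega+\zeta)e^{2\pi\ii\zeta\cdot x'}\dd\zeta$, which is supported in $[\gamma H,C_hH]$. So the paper applies Plancherel in $\rho$ directly at fixed $x_\perp,y_\perp$, just as you already do, implicitly, for $Z_\alpha$ at fixed $y'$.
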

\begin{proof}
Differentiate $\mathcal Q_\eta=\int q_r^\eta e$. By
\eqref{eq:densitymass},
\[
 \partial_tq_r^\eta(t,y)
 =2\int_{\omega^\perp}
 J_h(t,(y\cdot\omega+\eta)\omega+z)\cdot\omega\dd z
 +\int_{(x-y)\cdot\omega>\eta}s_h(t,x)\dd x.
\]
Use \eqref{eq:lowbalance} for the other derivative and integrate by
parts. The two current terms give $\mathcal P_\eta$, and Fubini gives
the forcing term $\int q_e^\eta s_h$, proving \eqref{eq:Qprime}.

To prove the lower bound in \eqref{eq:principalpositive}, fix $t,z$,
set $\Omega=\R\times\omega^\perp\times\omega^\perp$, and define
\begin{equation}\label{eq:directproduct}
 W_{\alpha,z}(\rho,x_\perp,y_\perp)
 =h(t,(\rho+\eta-z)\omega+x_\perp)
       \overline{Z_\alpha(t,\rho\omega+y_\perp)}.
\end{equation}
The partial Fourier transform in $\rho$ satisfies
\begin{equation}\label{eq:directsupport}
 \supp\mathcal F_\rho W_{\alpha,z}
 \subset[\gamma H-4L,C_hH+4L].
\end{equation}
Applying the one-dimensional Bernstein inequality in $\rho$,
with values in $L^2(\omega^\perp)$, gives
\[
 \sup_\rho\sum_\alpha\int_{\omega^\perp}|Z_\alpha(\rho,y)|^2\dd y
 \lesssim LE_w,
 \qquad
 \sum_\alpha\|W_{\alpha,z}\|_{L^2(\Omega)}^2\lesssim LM_hE_w.
\]
Together with \eqref{eq:directsupport}, this justifies the following
integrations against $k_r(z)\dd z$. By \eqref{eq:energyvector} and
Plancherel in $\rho$,
\begin{align*}
 C_\eta(t)&=\sum_{\alpha=0}^4\int_\R
           k_r(z)\|W_{\alpha,z}\|_{L^2(\Omega)}^2\dd z,\\
 \frac{1}{2}\mathcal P_\eta
 &=\sum_{\alpha=0}^4\int_\R k_r(z)
     \Imn\int_\Omega\overline{W_{\alpha,z}}\partial_\rho W_{\alpha,z}\dd\Omega\dd z\\
 &=\sum_{\alpha=0}^4\int_\R k_r(z)
     \int_{\R\times\omega^\perp\times\omega^\perp}
       2\pi\xi|\mathcal F_\rho W_{\alpha,z}|^2\dd\xi\dd x_\perp\dd y_\perp\dd z\\
 &\ge2\pi(\gamma H-4L)C_\eta(t).
 \qedhere
\end{align*}
\end{proof}

\subsection{Commutators and the bilinear estimate}
For $n_r$ defined in \eqref{eq:marginals}, set
\begin{equation}\label{eq:Xnorm}
 X(f)^2=\sup_\eta\int_J\!\int
       n_r(t,x\cdot\omega+\eta)|f(t,x)|^2\dd x\dd t.
\end{equation}
The supremum over fixed longitudinal shifts makes $X$ invariant under
spatial translations and allows the density weight to be retained in
commutator estimates. In particular, a convolution operator $T$ satisfies
$X(Tf)\le\|K_T\|_1X(f)$.

\begin{lemma}\label{lem:commutator}
Let $Tf=K_T*f$, where $K_T\in L^1(\R^3)$ is independent of time and
\[
 \int|z\cdot\omega|\,|K_T(z)|\dd z\le C_TL^{-1}.
\]
For $f,g\in C(J;\mathcal S(\R^3))$ and every fixed $\eta\in\R$,
\begin{equation}\label{eq:commutator}
 \int_J|\ip f{[T,q_r^\eta]g}|\dd t
 \le C_TL^{-1}X(f)X(g).
\end{equation}
\end{lemma}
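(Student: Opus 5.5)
We write the commutator explicitly as an integral over the kernel. Since $q=q_r^\eta$ depends only on $x\cdot\omega$ and time,
\[
 [T,q]g(x)=\int K_T(z)\bigl(q(t,(x-z)\cdot\omega)-q(t,x\cdot\omega)\bigr)g(x-z)\dd z .
\]
By the fundamental theorem of calculus and $\partial_\rho q_r^\eta(t,\rho)=-n_r(t,\rho+\eta)$, the difference equals
\[
 \int_0^1 n_r\bigl(t,(x-\theta z)\cdot\omega+\eta\bigr)\,(z\cdot\omega)\dd\theta .
\]
Hence
\[
 |\ip f{[T,q]g}|\le\int|K_T(z)|\,|z\cdot\omega|\int_0^1\!\int |f(x)|\,|g(x-z)|\,n_r\bigl(t,(x-\theta z)\cdot\omega+\eta\bigr)\dd x\dd\theta\dd z .
\]

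The key point is that $n_r\ge0$, so the weight can be split symmetrically. For fixed $z$ and $\theta$, apply Cauchy--Schwarz in $(t,x)$ with weight $n_r(t,(x-\theta z)\cdot\omega+\eta)$:
\[
 \int_J\!\int |f(x)||g(x-z)|\,n_r\dd x\dd t
 \le\Bigl(\int_J\!\int n_r(t,x\cdot\omega+\eta_1)|f|^2\Bigr)^{1/2}
 \Bigl(\int_J\!\int n_r(t,x'\cdot\omega+\eta_2)|g(x')|^2\Bigr)^{1/2}.
\]
Here $\eta_1=\eta-\theta z\cdot\omega$. For the second factor we substitute $x'=x-z$, which gives the shift $\eta_2=\eta+(1-\theta)z\cdot\omega$. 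Both shifts are time-independent constants, because $z$ and $\theta$ are fixed and $K_T$ is independent of time. Each factor is therefore bounded by the supremum defining $X$, so the double integral is at most $X(f)X(g)$, uniformly in $z$ and $\theta$.

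We then integrate in $t$ first. Tonelli applies, since everything is nonnegative once absolute values are taken; this pulls $\int_J$ inside the $z,\theta$ integrals. Integrating against $|K_T(z)||z\cdot\omega|\dd z\dd\theta$ then gives $\int|z\cdot\omega||K_T(z)|\dd z\,X(f)X(g)\le C_TL^{-1}X(f)X(g)$.

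The only delicate point is justifying the interchange of integrals and the finiteness of $X(f),X(g)$. Both follow for $f,g\in C(J;\mathcal S)$ from $\|n_r\|_\infty\lesssim rM_h$ and the compactness of $J$.
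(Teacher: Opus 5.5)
Your proposal is correct and follows the paper's proof essentially step for step. Both write the commutator through its kernel and express the weight difference by the fundamental theorem of calculus using $\partial_\rho q_r^\eta=-n_r(t,\rho+\eta)$. Both then apply weighted Cauchy--Schwarz at fixed $(z,\theta)$ with the time-independent shifts $\eta-\theta z\cdot\omega$ and $\eta+(1-\theta)z\cdot\omega$, and integrate against $|z\cdot\omega|\,|K_T(z)|\dd z\dd\theta$.
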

\begin{proof}
The convolution formula for $[T,q_r^\eta]g$ contains the weight difference
\[
 q_r^\eta(\rho-z\cdot\omega)-q_r^\eta(\rho)
 =(z\cdot\omega)\int_0^1
       n_r(t,\rho+\eta-\tau z\cdot\omega)\dd\tau.
\]
For fixed $z,\tau$, Cauchy--Schwarz gives
\begin{align*}
 &\int_J\!\int n_r(t,x\cdot\omega+\eta-\tau z\cdot\omega)
                  |f(t,x)g(t,x-z)|\dd x\dd t\\
 &\quad\le
 \left(\int_J\!\int n_r(t,x\cdot\omega+\eta-\tau z\cdot\omega)|f(t,x)|^2\dd x\dd t\right)^{\frac{1}{2}}\\
 &\qquad\times
 \left(\int_J\!\int n_r(t,x\cdot\omega+\eta-\tau z\cdot\omega)|g(t,x-z)|^2\dd x\dd t\right)^{\frac{1}{2}}\\
 &\quad\le X(f)X(g).
\end{align*}
In the second factor, $y=x-z$ changes the shift to
$\eta+(1-\tau)z\cdot\omega$. Integrating against
$|z\cdot\omega|\,|K_T(z)|\dd z\dd\tau$ proves the estimate.
\end{proof}

\begin{lemma}\label{lem:internalwork}
We have
\begin{equation}\label{eq:internalbound}
 \sup_\eta\int_J|\mathcal I_{q_r^\eta}(w)|\dd t
 \lesssim_{\chi} E_w\CC.
\end{equation}
\end{lemma}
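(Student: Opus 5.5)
We bound the three terms of $\mathcal I_q(w)$ in \eqref{eq:internalformula} separately, with $q=q_r^\eta$, using the commutator estimate of Lemma~\ref{lem:commutator}. The plan is to bound each term by $L^{-1}X(f)X(g)$ and then control each $X(\cdot)$ by $\CC$ and $E_w$.

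\textbf{Kernel moments.} First we check the hypotheses of Lemma~\ref{lem:commutator}. The operator $P=\chi(D/L)$ has kernel $L^3\check\chi(L\cdot)$, so $\int|z\cdot\omega||K_P(z)|\dd z\lesssim_\chi L^{-1}$. Similarly, $P^2$ has kernel $K_P*K_P$, and the first moment is subadditive under convolution, so it also has moment $\lesssim L^{-1}$. Hence
\[
 \int_J|\mathcal I_q|\dd t\lesssim_\chi L^{-1}\Bigl(a^2X(w)X(F(v))+\sum_jX(\partial_jw)X(\partial_jF(v))+X(F(v))^2\Bigr).
\]
The commutator $[q,P^2]$ is handled by the same lemma after taking adjoints. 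Note that $X(f)^2=\sup_\eta\int_JC$-type integrals: by definition, $X(Z_\alpha)^2\le\sup_\eta\int_J C_\eta\dd t=\CC$ for each component in \eqref{eq:energyvector}. Thus $aX(w)$, $X(\partial_jw)$ and $X(v^2)$ are all at most $\CC^{1/2}$.

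\textbf{Nonlinear factors.} Next we bound the factors involving $F(v)=|v|^2v$ pointwise by low-frequency quantities times energy components. We have $|F(v)|\le\|v\|_{L^\infty}|v^2|$, so $X(F(v))\le\|v\|_{L^\infty_{t,x}}X(v^2)$. Also $|\nabla F(v)|\lesssim|v|^2|\nabla v|$, and $\nabla v=P\nabla w$ with $X(P\nabla w)\lesssim X(\nabla w)$ by the translation invariance noted after \eqref{eq:Xnorm}. This gives $X(\nabla F(v))\lesssim\|v\|_{L^\infty}^2\CC^{1/2}$. For the term with $a^2$, we write $a^2X(w)X(F(v))\le a\CC^{1/2}\|v\|_\infty\CC^{1/2}$.

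\textbf{The main obstacle.} The difficulty is converting $\|v\|_{L^\infty}$ into something of size $(LE_w)^{1/2}$, so that $L^{-1}\|v\|_\infty^2\lesssim E_w$. Bernstein in three dimensions with only the energy gives $\|v\|_\infty^2\lesssim L\|\nabla w\|_2^2\lesssim LE_w$, because $\nabla v$ has frequency $\lesssim L$ and $\|v\|_\infty\lesssim L^{1/2}\|\nabla v\|_2$ (Bernstein for a function with Fourier support in $B(0,2L)$, using $\dot H^1$). I would first try this estimate.

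\textbf{Collecting terms.} With $\|v\|_\infty^2\lesssim LE_w$:
\begin{itemize}
\item the gradient term gives $L^{-1}\cdot LE_w\,\CC=E_w\CC$;
\item the $F$–$F$ term gives $L^{-1}\|v\|_\infty^2\CC\lesssim E_w\CC$;
\item the mass term gives $L^{-1}a\|v\|_\infty\CC\lesssim aL^{-1/2}E_w^{1/2}\CC$.
\end{itemize}
The mass term is at most $E_w\CC$ provided $a^2\lesssim LE_w$. If instead $a^2>LE_w$, I would bound $X(F(v))$ differently: write $|F(v)|\le\|v\|_\infty|v||w|$-type products, with $X(v)\le a^{-1}\CC^{1/2}$, so that $a^2X(w)X(F(v))\le a^2\cdot a^{-1}\CC^{1/2}\|v\|_\infty^2a^{-1}\CC^{1/2}\lesssim LE_w\CC$. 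Either way we obtain $\lesssim E_w\CC$ after the factor $L^{-1}$.

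Taking the supremum over $\eta$ then proves \eqref{eq:internalbound}.
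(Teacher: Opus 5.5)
Your proposal is correct and follows the paper's proof: the commutator estimate of Lemma~\ref{lem:commutator} with the $O_\chi(L^{-1})$ first moments of the kernels of $P$ and $P^2$, the Bernstein bound $\|v\|_\infty^2\lesssim LE_w$, and $X(Z_\alpha)^2\le\CC$ for each component of the energy vector. The case split in the mass term is unnecessary: your fallback bound $X(F(v))\le\|v\|_{L^\infty_{t,x}}^2X(v)\lesssim_\chi LE_w\,X(w)$ holds for every $a$ (it comes from $|F(v)|=|v|^3\le\|v\|_\infty^2|v|$, with no factor $|w|$ involved) and gives $L^{-1}a^2X(w)X(F(v))\lesssim E_w\,a^2X(w)^2\le E_w\CC$, which is exactly how the paper treats this term.
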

\begin{proof}
Fourier inversion and Cauchy--Schwarz give
\[
 \|v\|_\infty^2
 \lesssim\left(\int_{|\xi|\le2L}|\xi|^{-2}\dd\xi\right)
          \|\nabla w\|_2^2
 \lesssim LE_w.
\]
Since $v=Pw$ and $X(Pf)\lesssim_{\chi}X(f)$,
\begin{align*}
 a^2X(w)^2&\le\CC,&
 \sum_jX(\partial_jw)^2&\le3\CC,& X(v^2)^2&\le2\CC,\\
 X(F(v))&\lesssim LE_wX(w),&
 X(\partial_jF(v))&\lesssim LE_wX(\partial_jw),&
 X(F(v))^2&\lesssim LE_wX(v^2)^2.
\end{align*}
Since $P=\chi(D/L)$, its kernel and that of $P^2$ satisfy
\[
 K_P(z)=L^3\check\chi(Lz),\qquad
 K_{P^2}(z)=L^3\mathcal F^{-1}(\chi^2)(Lz).
\]
Consequently, uniformly for unit vectors $\omega$,
\begin{equation}\label{eq:cutoffmoment}
\begin{split}
 \int |z\cdot\omega|\bigl(|K_P(z)|+|K_{P^2}(z)|\bigr)\dd z
 &\le L^{-1}\int |y|\bigl(|\check\chi(y)|
                 +|\mathcal F^{-1}(\chi^2)(y)|\bigr)\dd y\\
 &\lesssim_\chi L^{-1}.
\end{split}
\end{equation}
Thus \eqref{eq:commutator}, applied to \eqref{eq:internalformula}, yields
\begin{align*}
 \sup_\eta\int_J|\mathcal I_{q_r^\eta}|
 &\lesssim L^{-1}\left[
 a^2X(w)X(F(v))+\sum_jX(\partial_jw)X(\partial_jF(v))+X(F(v))^2\right]\\
 &\lesssim E_w\left[a^2X(w)^2+\sum_jX(\partial_jw)^2+X(v^2)^2\right]
 \lesssim E_w\CC.
 \qedhere
\end{align*}
\end{proof}

\begin{proof}[Proof of Proposition~\ref{prop:flux}]
Integrate \eqref{eq:Qprime} and apply Lemma~\ref{lem:internalwork} together with
$0\le\mathcal Q_\eta\le M_hE_w$ and
$\int_J|\int q_e^\eta s_h|\dd t\le2E_w\|hG\|_{L^1_{t,x}}$.
Taking the supremum over fixed $\eta$ and using
\eqref{eq:internalbound}, we obtain
\[
 [4\pi(\gamma H-4L)-C_\chi E_w]\CC
 \le M_hE_w+2E_w\|hG\|_{L^1_{t,x}}+
       \sup_\eta\int_J|\DD_{q_r^\eta}|.
\]
The coefficient on the left is at least $\gamma H$ by
\eqref{eq:fluxseparation}, proving \eqref{eq:forcedflux}.

For the product estimate, \eqref{eq:productpositive} and
pointwise Cauchy--Schwarz give
\begin{align*}
 \|h(\cdot+x_0)T_Kw\|_{L^2_{t,x}}^2
 &\lesssim(1+K/r)\int_J\!\int\rho_h(t,x+x_0)|T_Kw(t,x)|^2,\\
 |T_Kw(t,x)|^2
 &\le\|K_{T_K}\|_1\int|K_{T_K}(z)|\,|w(t,x-z)|^2\dd z.
\end{align*}
For fixed $z$, apply \eqref{eq:transversetrace} after setting $y=x-z$.
With $\eta=(x_0+z)\cdot\omega$, this gives
\[
 \int_J\!\int\rho_h(t,y+x_0+z)|w(t,y)|^2\dd y\dd t
 \lesssim\log(2+L/a)\int_J C_\eta(t)\dd t
 \le\log(2+L/a)\CC.
\]
Integration against $|K_{T_K}(z)|\dd z$ proves \eqref{eq:fluxproduct}.
\end{proof}

We justify the half-space identities by smooth Heaviside approximations
and spatial cutoffs. For Schwartz fields on compact time intervals,
decay permits their removal by dominated convergence; the differentiated
Heaviside functions converge distributionally to the hyperplane measure.
We then apply the Fourier-support arguments to the original functions
and weight $q_r^\eta$, which satisfies \eqref{eq:weightbandwidth}.
All limits are taken with the translations and $\eta$ fixed. The
estimates are uniform in these parameters, so their suprema can be
taken afterwards.

The commutator terms have been absorbed into the left side of
\eqref{eq:forcedflux}. It remains to estimate $E_w\|hG\|_{L^1_{t,x}}$ and
the weighted contribution of $R$ for the frequency components of
$u$.
\section{Nonlinear estimates and frequency induction}\label{sec:recursion}
The estimates for these two terms, together with interaction Morawetz
for comparable frequencies, give the recursion for $\BB$.
Throughout, we assume \eqref{eq:boot} and \eqref{eq:roughA}.

\subsection{The frequency-localized nonlinearity}\label{sec:source}
The high-frequency forcing is $Q_HF(u)$. We estimate its product
with $Q_Hu$, as required in Proposition~\ref{prop:actualflux}, and
with $H^{-1}\nabla Q_Hu$ for the interaction Morawetz estimate.
Let $Q_H=\varphi(D/H)$, where
$\varphi\in C_c^\infty(\R^3)$ is fixed and supported in
$\{\xi:c_0\le|\xi|\le C_0\}$, with $0<c_0<C_0<\infty$.
This includes $P_H$ and the angular projections introduced below.
The bound \eqref{eq:smoothkernel} gives
\begin{equation}\label{eq:QHkernel}
 \|K_{Q_H}\|_{L^1}
 +H^{-1}\sum_{j=1}^3\|K_{\partial_jQ_H}\|_{L^1}
 \lesssim_\varphi1,\qquad
 Q_Hf=\sum_{M\sim H}Q_HP_Mf,
\end{equation}
where the sum has a uniformly bounded number of terms.
Set $h=Q_Hu$ and $G_H=Q_HF(u)$, so that
$(\ii\partial_t+\Delta)h=G_H$, and define
\begin{equation}\label{eq:DHdef}
 D_H=\sup_{x_0\in\R^3}\int_J\!\int_{\R^3}
 \bigl(|h(t,x+x_0)|+H^{-1}|\nabla h(t,x+x_0)|\bigr)
 |G_H(t,x)|\dd x\dd t.
\end{equation}
\begin{proposition}\label{prop:source}
Assume \eqref{eq:pair}. There are $c>0$ and $H_0\ge c^{-1}$,
depending only on the fixed cutoff functions, such that, for $H\ge H_0$,
\begin{equation}\label{eq:sourcebound}
 D_H\lesssim_s\BB(cH)\frac{\ell_H^3}{H^3m_H^4}
 =\BB(cH)b_H^2\delta_H\ell_H^3.
\end{equation}
\end{proposition}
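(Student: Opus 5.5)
The bound \eqref{eq:sourcebound} is a quartic estimate. I would expand $G_H$ into dyadic cubic pieces and pair the four factors into two bilinear products, each controlled by \eqref{eq:pair}. The integrand has no oscillation to exploit, since absolute values are taken, so the argument is pure bookkeeping of frequency weights.

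\textbf{Reductions.} By \eqref{eq:QHkernel}, $h=\sum_{M\sim H}Q_HP_Mu$ with boundedly many terms, and $H^{-1}\nabla Q_H$ has an $L^1$ kernel of size $O(1)$. So both $|h|$ and $H^{-1}|\nabla h|$ are bounded pointwise by $\int|\kappa(z)|\,|u_M(t,x-z)|\dd z$ for measures $\kappa$ of bounded total variation and $M\sim H$. Similarly I would write $F(u)=\sum u_{H_1}\overline{u_{H_2}}u_{H_3}$ over dyadic frequencies, first truncated and then passed to the limit by dominated convergence as in Section~\ref{sec:notation}. This gives
\[
|Q_H(u_{H_1}\overline{u_{H_2}}u_{H_3})(x)|\le\int|K_{Q_H}(z)|\,|u_{H_1}u_{H_2}u_{H_3}|(x-z)\dd z .
\]
All three factors are translated by the same vector, so every term of $D_H$ has the form of Lemma~\ref{lem:kernel} with $q\equiv1$. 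The term is a product of four dyadic pieces at frequencies $M\sim H$ and $T\ge T'\ge K$, with relative translations absorbed into the suprema in \eqref{eq:Bdef}. The multiplier $Q_H$ forces $T\gtrsim H$, and Lemma~\ref{lem:energythreshold}-type frequency counting (the output frequency is at least $c_0H$) gives the structure in the two cases below. I would take $c$ small enough that all frequencies $\ge$ (a fixed fraction of) $H$ exceed $cH$, together with $H_0\ge c^{-1}$ so that $cH\ge1$. With this choice \eqref{eq:pair} is available with $\BB(cH)$ for every pair below, using monotonicity of $\BB$ and \eqref{eq:thetastability} to handle neighboring dyadics.

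\textbf{Case $T\sim H$.} I would pair $(u_M,u_K)$ and $(u_T,u_{T'})$, each with the higher frequency $\gtrsim cH$. By Lemma~\ref{lem:kernel} and \eqref{eq:pair} the term is bounded by
\[
\BB(cH)\,\frac{\ell_H}{H^3m_H^2m_Km_{T'}}\,\theta_{H,K}\theta_{H,T'} .
\]
Summing over $K,T'\lesssim H$ with \eqref{eq:sumlow} gives $\BB(cH)\ell_H^3/(H^3m_H^4)$.

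\textbf{Case $T\gg H$.} Here output frequency $\lesssim H$ forces $T'\sim T$. I would pair $(u_T,u_M)$, which is allowed since $M\le T$ and $T\ge cH$, and $(u_{T'},u_K)$. This gives the bound
\[
\BB(cH)\,\frac{\ell_T}{T^3m_T^2m_Hm_K}\,\theta_{T,K} .
\]
Summing in $K$ by \eqref{eq:sumlow} gives $\BB(cH)\ell_T^2/(T^3m_T^3m_H)$. Summing in $T\gtrsim H$ by \eqref{eq:sumthree} gives $\BB(cH)\ell_H^2/(H^3m_H^4)$, which is dominated by the claimed bound. Combining the two cases, and using $b_H^2\delta_H=(H^3m_H^4)^{-1}$, proves \eqref{eq:sourcebound}.

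\textbf{Main obstacle.} The delicate step is the first case. There the two lowest frequencies $K$ and $T'$ each carry a $1/m$ weight, and summing them costs $\ell_H^2$. This is why the bound has $\ell_H^3$ rather than $\ell_H$. I also have to check that the $\theta$ factors really are present for each low frequency, so that the sums over $K<1$ converge.
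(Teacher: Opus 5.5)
Your proposal is correct and follows essentially the paper's route. It uses the same split into $T\sim H$ and $T\gg H$, two applications of \eqref{eq:pair} with $\BB(cH)$, and summation via \eqref{eq:sumlow} and \eqref{eq:sumthree}. The one difference, in the case $T\sim H$, is immaterial: the paper pairs $h$ with the middle cubic factor and $u_T$ with the lowest, while you pair $h$ with the lowest and $u_T$ with $u_{T'}$, which gives the same bound.
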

\begin{proof}
Decompose
\[
 F(u)=\sum_{H_1,H_2,H_3}u_{H_1}\overline{u_{H_2}}u_{H_3}.
\]
Write $T\ge M\ge K$ for these three frequencies in decreasing order.
There are at most six orderings; the estimates below apply to each.
The support of $\varphi$ implies that a nonzero term
$Q_H(u_{H_1}\overline{u_{H_2}}u_{H_3})$ satisfies
\[
 T\gtrsim H,\qquad T\gg H\quad\Longrightarrow\quad M\sim T.
\]
We first estimate the part of $D_H$ containing $|h|$.

If $T\sim H$, writing $Q_H$ as a convolution and applying
Cauchy--Schwarz gives, for each convolution variable $y$,
\begin{align*}
 &\int_J\!\int |h(t,x+x_0)|\,|u_Tu_Mu_K|(t,x-y)\dd x\dd t\\
 &\qquad\le
 \|h(t,x+x_0+y)u_M(t,x)\|_{L^2_{t,x}}
 \|u_Tu_K\|_{L^2_{t,x}}.
\end{align*}
Using \eqref{eq:QHkernel}, \eqref{eq:convolutionpair}, and
\eqref{eq:thetastability}, we apply \eqref{eq:pair} to these two
products. Integrating against $|K_{Q_H}(y)|\dd y$ and using
\eqref{eq:sumlow}, we obtain
\[
 \sum_{M,K\lesssim H}
 \frac{C_s\BB(cH)\ell_H}{H^3m_H^2}
       \frac{\theta_{H,M}\theta_{H,K}}{m_Mm_K}
 \lesssim_s\BB(cH)\frac{\ell_H^3}{H^3m_H^4}.
\]
If $T\gg H$, denote the three frequencies by $T\ge T'\ge M$;
then $T'\sim T$. Apply \eqref{eq:pair} to $u_Th$ and $u_{T'}u_M$.
By \eqref{eq:sumlow} and \eqref{eq:sumthree},
\begin{align*}
 \sum_{T\gg H}\sum_{M\le T}
 \frac{C_s\BB(cH)\ell_T}{T^3m_T^2m_Hm_M}
       \theta_{T,H}\theta_{T,M}
 &\lesssim_s\frac{\BB(cH)}{m_H}
       \sum_{T\gg H}\frac{\ell_T^2}{T^3m_T^3}\\
 &\lesssim_s\BB(cH)\frac{\ell_H^2}{H^3m_H^4}.
\end{align*}
The same proof applies with $h$ replaced by $H^{-1}\partial_jh$,
using \eqref{eq:QHkernel}. Summing over $j$ proves the result.
\end{proof}

\subsection{The low-frequency energy and its error}\label{sec:exterior}
For the low-frequency part of the solution, we need both its energy
and the weighted error in \eqref{eq:forcedflux}.
Set $a=\mu^{-1}$, $P=P_{\le L}$, and
\begin{equation}\label{eq:wvr}
 w=Pu,\qquad v=P^2u=Pw,\qquad R_L=P\bigl(F(u)-F(v)\bigr).
\end{equation}
Then
\begin{equation}\label{eq:lowforced}
 (\ii\partial_t+\Delta)w=PF(v)+R_L.
\end{equation}
Since $P^2\ne P$, the two successive projections in $v$ are retained.
For $L\gg1$, the low-frequency energy in \eqref{eq:abstractenergy} is
\[
 E_L=\sup_{t\in J}\left(a^2\|w(t)\|_2^2+\|\nabla w(t)\|_2^2
                         +\frac{1}{2}\|v(t)\|_4^4\right).
\]
We first bound $E_L$ without a positive power of $\mu$.

\begin{lemma}\label{lem:actualEL}
Under \eqref{eq:boot}, for all $L\gg1$, including $L>N$,
\begin{equation}\label{eq:actualEL}
 E_L\lesssim_s m_L^{-2},
 \qquad
 \norm{P_{\le L}^2u}_{L^\infty_{t,x}}^2\lesssim_s Lm_L^{-2}.
\end{equation}
\end{lemma}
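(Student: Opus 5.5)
We bound the three pieces of $E_L$ one at a time, using only the energy bound $\sup_JE(I_Nu)\le1$ from \eqref{eq:boot} together with the kernel bounds already recorded. The guiding observation is that on the support of $\chi(\cdot/L)$ the multiplier $m_N$ is bounded below by $\min\{1,(N/2L)^{1-s}\}\sim_s m_L$. This holds because $m_N$ is radial and nonincreasing. Consequently $I_N$ can be inverted on low frequencies with loss $m_L^{-1}$.

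\textbf{Gradient term.} Write $\nabla w=\chi(D/L)m_N(D)^{-1}\nabla I_Nu$. The symbol $\chi(\xi/L)/m_N(\xi)$ is bounded by $C_sm_L^{-1}$, so Plancherel gives
\[
\|\nabla w\|_2\lesssim_s m_L^{-1}\|\nabla I_Nu\|_2\lesssim_s m_L^{-1}.
\]
No kernel bound is needed here, since we are working in $L^2$.

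\textbf{Mass term.} Since $a=\mu^{-1}$ and $\|w\|_2\le\|u\|_2$, we need mass conservation on $J$: for Schwartz solutions, $\|u(t)\|_2=\|u(t_0)\|_2\le\mu$. This gives $a^2\|w\|_2^2\le1\le m_L^{-2}$.

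\textbf{Potential term.} We need $\|v\|_4^4\lesssim_s m_L^{-2}$. Split $v=P^2u=P^2P_{\le N/100}u+P^2P_{>N/100}u$.

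For the low part, $P_{\le N/100}u=I_NP_{\le N/100}u=P_{\le N/100}I_Nu$. The operator $P^2$ has a uniformly $L^1$ kernel, so this part is bounded in $L^4$ by $\|I_Nu\|_4\lesssim1$.

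For the high part (relevant only when $L\gtrsim N$), use Sobolev/Bernstein at each dyadic scale. Writing $u_M=\sum_jK_{M,j}*\partial_jI_Nu$ by \eqref{eq:dyadicIestimate}, we get
\[
\|u_M\|_4\lesssim M^{3/4}\|u_M\|_2\lesssim_s M^{3/4}(Mm_M)^{-1}.
\]
Summing over $N/200\le M\lesssim L$, the exponent in $M^{-1/4}m_M^{-1}=N^{s-1}M^{3/4-s}$ is positive when $s<3/4$. In that case the sum is $\lesssim L^{-1/4}m_L^{-1}$, so its fourth power is $\lesssim L^{-1}m_L^{-4}$. Since $L^{-1}m_L^{-2}=\delta_L\le1$, this gives $\lesssim m_L^{-2}$. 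When $s\ge3/4$ the sum is $\lesssim N^{-1/4}\log$ or bounded, which is even better.

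This step is the main obstacle, because a naive Gagliardo--Nirenberg bound
\[
\|v\|_4^4\lesssim\|v\|_2\|\nabla v\|_2^3
\]
introduces a factor $\mu$. The dyadic decomposition avoids $\mu$ entirely, because each $u_M$ with $M\gtrsim N$ is controlled by the energy alone. Summing the three pieces gives $E_L\lesssim_s m_L^{-2}$.

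\textbf{$L^\infty$ bound.} Apply the argument of Lemma~\ref{lem:internalwork}. Fourier inversion with Cauchy--Schwarz against $|\xi|^{-2}$ on $|\xi|\le2L$ gives
\[
\|P^2u\|_\infty^2\lesssim\Bigl(\int_{|\xi|\le2L}|\xi|^{-2}\dd\xi\Bigr)\|\nabla Pu\|_2^2\lesssim L\,\|\nabla w\|_2^2\lesssim_s Lm_L^{-2},
\]
using the gradient bound above. Equivalently, one can sum Bernstein bounds $\|u_M\|_\infty\lesssim M^{1/2}m_M^{-1}$ from \eqref{eq:massfreq} over $M\lesssim L$ by \eqref{eq:sumLinfty}, while treating frequencies below $1$ via the same $\dot H^1$ bound. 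Taking the supremum over $t\in J$ completes the proof.
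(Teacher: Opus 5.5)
Your proof is correct. It follows the paper for the mass term, the gradient term, the low-frequency part of the potential term, and the $L^\infty$ bound. The one genuinely different step is the high-frequency part of $\|v\|_4^4$.

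For the low-frequency potential term, you use $P_{\le N/100}$, on whose support $I_N$ is the identity. The paper uses the bounded kernel of $P_{\le N}I_N^{-1}$; the two are equivalent.

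For the $L^\infty$ bound, the paper sums the dyadic bounds from \eqref{eq:massfreq} via \eqref{eq:sumLinfty}, which is your alternative. Your main route is the Fourier-inversion/Cauchy--Schwarz argument the paper uses in Lemma~\ref{lem:internalwork}. It is slightly more economical, since it needs only the gradient bound you have just proved.

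For the high-frequency part of the potential term, the paper uses H\"older, $\|v_{>N}\|_4^4\le\|v_{>N}\|_3^2\|v_{>N}\|_6^2$. The $L^3$ factor is controlled by the $\dot H^{1/2}$ tail bound $N^{-1/2}$ from \eqref{eq:criticalrough}, and the $L^6$ factor by Sobolev plus the gradient bound. This gives $N^{-1}m_L^{-2}$ in one line, uniformly for $\frac12<s<1$.

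Your dyadic Bernstein summation uses only the $L^2$ bounds in \eqref{eq:massfreq} and also avoids any power of $\mu$. It does, however, force a case split at $s=\frac34$. Two details there need tidying:
\begin{enumerate}[label=(\roman*)]
\item The identity $M^{-1/4}m_M^{-1}=N^{s-1}M^{3/4-s}$ holds only for $M>N$. The terms with $N/200\le M\le N$ are $M^{-1/4}$ and contribute $O(N^{-1/4})$, which is harmless in every case.
\item For $s\ge\frac34$ the fourth power is $\lesssim N^{-1}\log^4(2+L/N)$, with the logarithm present only at $s=\frac34$. This is not ``even better'' than the $s<\frac34$ bound $L^{-1}m_L^{-4}$ (at $s=\frac34$ that equals $N^{-1}$, and for $s>\frac34$ it is smaller still). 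It is nonetheless $\lesssim m_L^{-2}$, using $\log^4(2+x)\lesssim 1+x^{1/2}$ at $s=\frac34$, which is all you need.
\end{enumerate}
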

\begin{proof}
The mass and kinetic terms satisfy
\[
 a^2\|P_{\le L}u\|_2^2\lesssim1,\qquad
 \|\nabla P_{\le L}u\|_2^2\lesssim_s m_L^{-2}\|\nabla Iu\|_2^2
 \lesssim_s m_L^{-2}.
\]
The symbol of $P_{\le N}I_N^{-1}$ is a fixed smooth function of
$\xi/N$. Thus \eqref{eq:smoothkernel} gives
\[
 \|K_{P_{\le L}^2P_{\le N}I_N^{-1}}\|_1
 \le\|K_{P_{\le L}}\|_1^2\|K_{P_{\le N}I_N^{-1}}\|_1
 \lesssim_s1.
\]
Thus \eqref{eq:criticalrough} and interpolation give
\begin{align*}
 \|v_{\le N}\|_4&\lesssim_s\|Iu\|_4\lesssim_s1,\\
 \|v_{>N}\|_3&\lesssim_sN^{-\frac{1}{2}},&
 \|v_{>N}\|_6&\lesssim_sm_L^{-1},\\
 \|v_{>N}\|_4^4&\le\|v_{>N}\|_3^2\|v_{>N}\|_6^2
                  \lesssim_sN^{-1}m_L^{-2}.
\end{align*}
Combining these bounds and using \eqref{eq:sumLinfty}, we obtain
\[
 E_L\lesssim_s1+m_L^{-2}+N^{-1}m_L^{-2}\lesssim_sm_L^{-2},\qquad
 \|v\|_\infty\lesssim_s
       \sum_{K\le4L}\frac{\sqrt K}{m_K}
       \lesssim_s\frac{\sqrt L}{m_L}.
\qedhere
\]
\end{proof}
For the functions in \eqref{eq:wvr}, write
$\DD_{q,L}=\DD_q(w,R_L)$, where $\DD_q$ is defined in
\eqref{eq:abstractwork}. Thus
\begin{equation}\label{eq:exteriorD}
 \DD_{q,L}(t)=2\Imn\int_{\R^3}q(t,x)
 \left[a^2\bar wR_L+\nabla\bar w\cdot\nabla R_L
       +\overline{F(v)}PR_L\right]\dd x.
\end{equation}
The weighted error satisfies the following estimate.

\begin{proposition}\label{prop:exterior}
Let $0<c\le\frac{1}{128}$ and $L$ satisfy $cL\ge1$, with $a=\mu^{-1}$.
Assume \eqref{eq:pair} and the pointwise bound in
\eqref{eq:massfreq}. If $q\in L^\infty(J\times\R^3;\R)$ satisfies
\begin{equation}\label{eq:narrowweight}
 \supp\widehat q(t)\subset B(0,L/256)
 \quad\text{for almost every }t,
\end{equation}
then
\begin{equation}\label{eq:exteriorbound}
 \int_J|\DD_{q,L}(t)|\dd t
 \lesssim_s\|q\|_{L^\infty_{t,x}}\BB(cL)
 \frac{\ell_L^3}{Lm_L^4}(1+\delta_L).
\end{equation}
\end{proposition}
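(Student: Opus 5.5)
The plan is to expand $R_L=P\bigl(F(u)-F(v)\bigr)$ multilinearly, writing $u=v+(u-v)$ with $u-v=(1-P^2)u$, and to split each term of \eqref{eq:exteriorD} into dyadic pieces. Every nonzero cubic term of $F(u)-F(v)$ contains at least one factor of $(1-P^2)u$, whose Fourier support lies in $\{|\xi|\ge L\}$. Since $\chi=1$ on $|\xi|\le1$, this support is exactly $|\xi|\ge L$.

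The first step is the frequency structure. Each of the three terms in $\DD_{q,L}$ has the form $\int q\,\overline{A}\,B$, with $A\in\{a^2w,\ \nabla w,\ P F(v)\}$ and $B$ equal to $R_L$ or $\nabla R_L$. Here $A$ has frequencies $\lesssim L$, so by \eqref{eq:narrowweight} the product $q\overline A$ has Fourier support in $B(0,CL)$. In the first two terms, $A$ is a single factor $u_{K_0}$ with $K_0\lesssim L$. The quartic form then has frequencies $K_0$ together with the three frequencies $T\ge M\ge K$ of the cubic term, and at least one of these is $\ge L/2$ because of the $(1-P^2)u$ factor.

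The narrow support of $q$ forces the frequency relation $\xi_0-\xi_1+\xi_2-\xi_3=\zeta$ with $|\zeta|\le L/256$. The argument of Lemma~\ref{lem:energythreshold} then gives two comparable largest frequencies $T\sim T'\gtrsim L$. The output $P$ forces $|\xi_1-\xi_2+\xi_3|\le 2L$, so whenever $T\gg L$ the second largest cubic frequency is also $\sim T$.

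For these quartic terms I apply Lemma~\ref{lem:kernel}, with $q$ as the bounded weight, and pair the factors as $(T,\text{lowest})$ and $(T',\text{other})$, using \eqref{eq:pair} with $\BB(cL)$. The kernel of $P$ and the derivatives contribute total variation $\lesssim T^2$ for the gradient term and $\mu^{-2}\le1$ for the mass term; the convolution translations are handled as in \eqref{eq:convolutionpair}. Summing with \eqref{eq:sumlow} and \eqref{eq:sumfour} gives
\[
\sum_{T\gtrsim L}T^2\,\frac{\BB(cL)\,\ell_T^3}{T^3m_T^2\,m_{\cdot}m_{\cdot}}
\lesssim_s \BB(cL)\frac{\ell_L^3}{Lm_L^4}.
\]
One factor $\nabla w$ costs $L$ rather than $T$, so the actual bound is better, but the rough version suffices.

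The third term, $\overline{F(v)}PR_L$, is sextic. I place the two lowest-frequency factors of $F(v)$ in $L^\infty_{t,x}$ using the pointwise bound in \eqref{eq:massfreq} and summing them by \eqref{eq:sumLinfty}; Lemma~\ref{lem:actualEL} gives $\|v\|_\infty^2\lesssim Lm_L^{-2}$. The remaining quartic form $q\,\bar v_{\max}\,PR_L$ still has the two-comparable-largest structure by the same support argument, and I pair it as above. This gives
\[
Lm_L^{-2}\cdot\BB(cL)\frac{\ell_L^3}{L^3m_L^4}
=\BB(cL)\frac{\ell_L^3}{Lm_L^4}\,\delta_L,
\]
which is the $\delta_L$ term in \eqref{eq:exteriorbound}.

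The main obstacle is the case where the two comparable top frequencies are both far above $L$ and the lowest factors are arbitrarily small, together with keeping track of the conjugation pattern under the narrow $q$-support. I will handle the small frequencies with the $\theta$ factor and \eqref{eq:sumlow}. The derivative cost in $\nabla R_L$ is absorbed because $P$ caps the output frequency at $2L$: the derivative costs only $L$, so the gradient term costs $L^2$ rather than $T^2$, which is an improvement over the rough bound above.
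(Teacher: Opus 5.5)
Your argument is essentially the paper's proof. It uses the same expansion in $b=u-v$ with $\supp\widehat b\subset\{|\xi|\ge L\}$. It applies the narrow support of $q$ before taking absolute values to force $T\ge L/2$ and $D_2\ge T/64$. For the quartic part it uses Lemma~\ref{lem:kernel} with \eqref{eq:pair} at threshold $cL$, summed by \eqref{eq:sumlow} and \eqref{eq:sumfour}. For the sextic part it puts two factors from $\overline{F(v)}$ in $L^\infty_{t,x}$ and sums them by \eqref{eq:sumLinfty}. You place the two lowest factors of $F(v)$ in $L^\infty_{t,x}$, whereas the paper uses the two lowest of all six. This difference is harmless: the largest factor of $F(v)$ is kept, so the two top frequencies remain among the other four and Lemma~\ref{lem:fourbook} applies.

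One claim is false and should be removed: the crude derivative cost $T^2$ does not suffice. With that cost, summing the two lower frequencies by \eqref{eq:sumlow} leaves $\BB(cL)\sum_{T\gtrsim L}\ell_T^3T^{-1}m_T^{-4}$. For $T>N$ the summand is $\ell_T^3N^{-1}(T/N)^{3-4s}$, which does not decay when $s\le\frac34$, so the series diverges. Your final remark is therefore essential, not an optional improvement. That remark is that $\nabla w=L(L^{-1}\nabla P)u$ and $\nabla R_L=L(L^{-1}\nabla P)(F(u)-F(v))$, and $L^{-1}\nabla P$ has a uniformly bounded kernel. Hence the gradient term costs $L^2$ independently of $T$, and $a^2=\mu^{-2}\le L^2$ handles the mass term. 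With this, \eqref{eq:sumfour} applies directly and gives $L^2\mathcal A_L=\BB(cL)\ell_L^3/(Lm_L^4)$, as in the paper.
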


To prove Proposition~\ref{prop:exterior}, we expand the quartic and
sextic terms in $\DD_{q,L}$. Each contains $b=u-v$ or its conjugate,
so the largest dyadic frequency $T$ is at least $L/2$.
If the weighted spatial integral is nonzero, the narrow Fourier
support of $q$ forces the second largest frequency to be at least
$T/64$. We use this restriction before taking absolute values;
two bilinear estimates then lead to the following dyadic sum.

\begin{lemma}\label{lem:fourbook}
Assume \eqref{eq:pair}. For fixed $0<c\le\frac{1}{128}$ and $L$ with
$cL\ge1$,
\begin{equation}\label{eq:AL}
 \begin{split}
 &\sum_{\substack{T\ge T'\ge M\ge K\\T\ge L/2,\ T'\ge T/64}}
 \left(\sup_y\|u_T(t,x+y)u_M(t,x)\|_{L^2_{t,x}}\right)
 \left(\sup_z\|u_{T'}(t,x+z)u_K(t,x)\|_{L^2_{t,x}}\right)\\
 &\hspace{25mm}\lesssim_s
 \mathcal A_L:=\BB(cL)\frac{\ell_L^3}{L^3m_L^4}.
 \end{split}
\end{equation}
\end{lemma}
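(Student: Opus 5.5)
We bound each of the two bilinear factors in \eqref{eq:AL} by \eqref{eq:pair} and then carry out the dyadic sums with Lemma~\ref{lem:sums}, in the same way as in the proof of Proposition~\ref{prop:quarticvariation}.

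\emph{The bilinear factors.} The constraints give $T\ge L/2$ and $T'\ge T/64\ge L/128\ge cL$. Also $c\le\frac1{128}$ and $cL\ge1$, so the threshold $R=cL$ is admissible in Lemma~\ref{lem:pair}. Since $M\le T'\le T$ and $K\le T'$, \eqref{eq:pair} applies to both products: to $(T,M)$ with $H=T$, and to $(T',K)$ with $H=T'$. Because $\BB$ is nonincreasing, both constants are at most $\BB(cL)$. The product of the two factors is therefore bounded by
\[
 C_s\BB(cL)\frac{\ell_T^{1/2}\ell_{T'}^{1/2}}{T^{3/2}T'^{3/2}m_Tm_{T'}}
 \frac{\theta_{T,M}}{m_M}\frac{\theta_{T',K}}{m_K}.
\]

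\emph{Summing the two low frequencies.} For fixed $T,T'$ we sum over $M\le T'$ and $K\le T'$. We drop the ordering $M\ge K$, which only enlarges the sum. Since $T\ge T'$, we have $\theta_{T,M}\ge\theta_{T',M}$, so we cannot simply replace $T$ by $T'$ in the first factor. Instead we enlarge the range to $M\le T$ and apply \eqref{eq:sumlow} at scale $T$, which gives $\ell_T/m_T$. For the $K$-sum, \eqref{eq:sumlow} at scale $T'$ gives $\ell_{T'}/m_{T'}$. The result is
\[
 C_s\BB(cL)\frac{\ell_T^{3/2}\ell_{T'}^{3/2}}{T^{3/2}T'^{3/2}m_T^2m_{T'}^2}.
\]

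\emph{Summing the two high frequencies.} Now sum over $T'\in[T/64,T]$. This range contains boundedly many dyadic values, and on it \eqref{eq:thetastability} gives $\ell_{T'}\sim\ell_T$ and $m_{T'}\sim_s m_T$. The expression becomes $C_s\BB(cL)\ell_T^3/(T^3m_T^4)$. Finally, \eqref{eq:sumfour} applied to the sum over $T\ge L/2$ (valid since $L\gg1$ follows from $cL\ge1$ with $c$ small, or else the finitely many remaining cases are absorbed) yields $\BB(cL)\ell_L^3/(L^3m_L^4)=\mathcal A_L$.

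\emph{Main obstacle.} The delicate point is the low-frequency summation. It is the factors $\theta$ that make the sums over arbitrarily small $M,K$ converge (these include $M,K<1$). The mismatch between $T$ and $T'$ must also be handled with care. Enlarging the range and using the monotonicity of $\theta$ in its first argument, as above, should suffice, since $m_{T'}\sim m_T$ and $\ell_{T'}\sim\ell_T$ up to constants.
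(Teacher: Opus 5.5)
Your proposal is correct and follows the paper's proof. You apply \eqref{eq:pair} with threshold $cL$ to both products (using $T'\ge T/64\ge cL$), sum the low frequencies with \eqref{eq:sumlow}, use \eqref{eq:thetastability} together with the bounded number of $T'$ for each $T$, and finish with \eqref{eq:sumfour}. The only cosmetic difference is order: the paper replaces $T'$ by $T$ via \eqref{eq:thetastability} before summing over $M,K$, whereas you sum at scales $T$ and $T'$ separately and compare them afterwards.
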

\begin{proof}
Since $T'\sim T$ and $T'\ge cL$, \eqref{eq:pair} and
\eqref{eq:thetastability} bound each summand by
\[
 \frac{C_s\BB(cL)\ell_T}{T^3m_T^2}
       \frac{\theta_{T,M}\theta_{T,K}}{m_Mm_K}.
\]
For each $T$, there are only finitely many possible dyadic $T'$, with
a uniform bound on their number. By \eqref{eq:sumlow} and
\eqref{eq:sumfour}, the sum is bounded by
\begin{align*}
 C_s\BB(cL)\sum_{T\ge L/2}
 \frac{\ell_T}{T^3m_T^2}
       \left(\sum_{M\le T}\frac{\theta_{T,M}}{m_M}\right)^2
 &\lesssim_s\BB(cL)\sum_{T\ge L/2}\frac{\ell_T^3}{T^3m_T^4}\\
 &\lesssim_s\BB(cL)\frac{\ell_L^3}{L^3m_L^4}.
 \qedhere
\end{align*}
\end{proof}

\begin{proof}[Proof of Proposition~\ref{prop:exterior}]
Set $b=u-v$. Then
\begin{equation}\label{eq:expand}
 \begin{gathered}
 \supp\widehat b\subset\{|\xi|\ge L\},\\
 F(u)-F(v)=2|v|^2b+v^2\bar b+2v|b|^2+\bar vb^2+|b|^2b.
 \end{gathered}
\end{equation}
Each term contains $b$ or $\bar b$. Decompose all factors into dyadic
frequencies $D_1\ge\cdots\ge D_m$, with $m=4$ or $6$, and put
$T=D_1$. A nonzero factor from $b$ has frequency at least $L/2$,
so $T\ge L/2$. If $D_2<T/64$, then for any conjugation signs
$\epsilon_j\in\{1,-1\}$,
\[
 \left|\sum_{j=1}^m\epsilon_j\xi_j\right|
 \ge\frac T2-\frac{2(m-1)T}{64}\ge\frac{11T}{32}.
\]
Let $\mathcal T_{\vec D}$ denote the corresponding expression before
multiplication by $q$. Translations and Fourier multipliers preserve
this support restriction. By \eqref{eq:narrowweight},
\[
 P_{\le T/8}q=q,\qquad P_{\le T/8}\mathcal T_{\vec D}=0,
 \qquad
 \int q\mathcal T_{\vec D}
 =\int qP_{\le T/8}\mathcal T_{\vec D}=0.
\]
These equalities use distributional duality and remain valid for bounded
$q$, including weights depending on one spatial variable.
Thus every nonzero term has
\begin{equation}\label{eq:twolargefrequencies}
 T\ge L/2,\qquad D_2\ge T/64.
\end{equation}
We use this restriction before taking absolute values.

Write $\DD_{q,L}=\DD_{q,L}^{(4)}+\DD_{q,L}^{(6)}$, where the first
two terms in \eqref{eq:exteriorD} are quartic and the last is sextic.
Since
\[
 \nabla w=L(L^{-1}\nabla P)u,\qquad
 \nabla R_L=L(L^{-1}\nabla P)(F(u)-F(v)),\qquad a^2\le L^2,
\]
the kernel bound \eqref{eq:smoothkernel}, the
four-factor version of \eqref{eq:producttreekernel}, and
Lemmas~\ref{lem:kernel} and \ref{lem:fourbook} give
\begin{equation}\label{eq:exteriorquartic}
 \int_J|\DD_{q,L}^{(4)}(t)|\dd t
 \lesssim_s\|q\|_{L^\infty_{t,x}} L^2\mathcal A_L.
\end{equation}
For the sextic term, note that
\[
 v_D=P^2u_D,\qquad b_D=(1-P^2)u_D,\qquad
 \|v_D\|_{L^\infty_{t,x}}+\|b_D\|_{L^\infty_{t,x}}
 \lesssim_s\frac{\sqrt D}{m_D},
\]
by \eqref{eq:massfreq} and the uniform total variation bounds for
the kernels of $P^2$ and $1-P^2$. Order the six frequencies as
\[
 T\ge T'\ge M\ge K\ge S_1\ge S_2.
\]
Three factors belong to $\overline{F(v)}$ and have dyadic frequencies
at most $4L$. Hence $S_1,S_2\le4L$, while
\eqref{eq:twolargefrequencies} gives $T'\sim T\gtrsim L$.
Use \eqref{eq:producttreekernel} to write each term as an integral
of six translated factors against a measure $\nu_{\vec D}$ with
$\|\nu_{\vec D}\|_{\TV}\lesssim1$. For fixed translations, put
the factors at $S_1,S_2$ in $L^\infty_{t,x}$ and apply
Cauchy--Schwarz to the products at $(T,M)$ and $(T',K)$.
Integrating against $|\nu_{\vec D}|$ and summing, we obtain
\begin{align*}
 \int_J|\DD_{q,L}^{(6)}(t)|\dd t
 &\lesssim_s \|q\|_{L^\infty_{t,x}}
 \sum_{S_2\le S_1\le4L}\frac{\sqrt{S_1S_2}}{m_{S_1}m_{S_2}}
 \sum_{\substack{T\ge T'\ge M\ge K\ge S_1\\
                   T\ge L/2,\ T'\ge T/64}}\\[-1mm]
 &\qquad\times
 \left(\sup_y\|u_T(t,x+y)u_M(t,x)\|_{L^2_{t,x}}\right)
 \left(\sup_z\|u_{T'}(t,x+z)u_K(t,x)\|_{L^2_{t,x}}\right).
\end{align*}
The finite number of monomials and frequency orderings changes only
the constant. Fix $S_1,S_2$, enlarge the remaining nonnegative sum,
and apply \eqref{eq:AL}. By \eqref{eq:sumLinfty},
\begin{equation}\label{eq:exteriorsextic}
 \begin{split}
 \int_J|\DD_{q,L}^{(6)}(t)|\dd t
 &\lesssim_s\|q\|_{L^\infty_{t,x}}\mathcal A_L
       \left(\sum_{S\le4L}\frac{\sqrt S}{m_S}\right)^2\\
 &\lesssim_s\|q\|_{L^\infty_{t,x}} Lm_L^{-2}\mathcal A_L.
 \end{split}
\end{equation}
Combining the two estimates proves
\[
 \int_J|\DD_{q,L}(t)|\dd t
 \lesssim_s\|q\|_{L^\infty_{t,x}}(L^2+Lm_L^{-2})\mathcal A_L
 =\|q\|_{L^\infty_{t,x}}\BB(cL)\frac{\ell_L^3}{Lm_L^4}(1+\delta_L).
 \qedhere
\]
\end{proof}

\subsection{Comparable frequencies}
For comparable frequencies, H\"older's inequality reduces the
bilinear estimate to an $L^4_{t,x}$ bound for each factor. We use
the interaction Morawetz identity in tensor-product form \cite{CGT}
for $(\ii\partial_t+\Delta)h=G$, retaining the terms containing $G$:
for $h=Q_Hu$, we have
$G=Q_HF(u)$, which need not equal $F(h)$.

\begin{lemma}\label{lem:forcedIM}
Let $(\ii\partial_t+\Delta)h=G$ on a finite interval $J$, with $h,G$
smooth and rapidly decreasing in space, and
$\supp\widehat h(t)\subset\{|\xi|\le C_hH\}$. Set
$M_h=\sup_J\norm h_2^2$. Then
\begin{equation}\label{eq:forcedIM}
 \norm h_{L^4_{t,x}(J)}^4
 \lesssim H M_h^2
 +H M_h\norm{\bar hG}_{L^1_{t,x}(J)}
 +M_h\norm{\nabla\bar h\,G}_{L^1_{t,x}(J)}.
\end{equation}
The implicit constant depends only on $C_h$.
\end{lemma}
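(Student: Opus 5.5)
We will prove \eqref{eq:forcedIM} by the tensor-product interaction Morawetz argument of \cite{CGT}. Set $\Psi(t,x,y)=h(t,x)h(t,y)$ on $\R^6$. Then $(\ii\partial_t+\Delta_{x,y})\Psi=\mathcal G$, with $\mathcal G=G(x)h(y)+h(x)G(y)$. Take the weight $a(x,y)=|x-y|$ and the Morawetz action
\[
 \mathcal M(t)=2\Imn\int_{\R^6}\nabla a\cdot\nabla\Psi\,\overline\Psi\dd x\dd y .
\]

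\textbf{Step 1 (endpoint).} Since $|\nabla a|\le\sqrt2$,
\[
 |\mathcal M(t)|\lesssim\|\Psi\|_2\|\nabla\Psi\|_2\lesssim\|h\|_2^3\|\nabla h\|_2\lesssim HM_h^2,
\]
where the last step uses the Fourier support of $h$ and Bernstein. After integration in time, this gives the first term on the right of \eqref{eq:forcedIM}.

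\textbf{Step 2 (time derivative).} We differentiate $\mathcal M$ using the local momentum identity for the forced equation. The linear part gives the usual terms:
\[
 4\int\partial_j\partial_ka\,\Ren(\partial_j\bar\Psi\partial_k\Psi)-\int\Delta^2a\,|\Psi|^2 .
\]
The first is nonnegative by convexity of $a$. In the tensor-product variables we have $-\Delta_{x,y}^2|x-y|=c\,\delta(x-y)$ with $c>0$, so the second equals $c\int|h(t,x)|^4\dd x$. This is the quantity we want to control. There is no nonlinear potential term because we do not use $G=F(h)$.

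\textbf{Step 3 (forcing).} The forcing contributes the terms
\[
 2\Ren\int\overline{\mathcal G}\,\bigl(2\nabla a\cdot\nabla\Psi+\Delta a\,\Psi\bigr).
\]
The main obstacle is that $\Delta a=c'/|x-y|$ is unbounded. We substitute $\mathcal G$ and integrate out the free variable.

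For the term with $\nabla a\cdot\nabla\Psi$, the derivative falls either on the same variable as $G$ or on the other one.
\begin{itemize}
\item If it falls on the same variable, bounding $|\nabla a|\le\sqrt2$ and the other factor by $\|h\|_2^2$ gives $M_h\|\nabla\bar h\,G\|_{L^1_{t,x}}$.
\item If it falls on the other variable, we get $\|\bar hG\|_{L^1}\sup_x\int|h||\nabla h|\le\|\bar hG\|_{L^1}\|h\|_2\|\nabla h\|_2$. Bernstein bounds this by $HM_h\|\bar hG\|_{L^1}$.
\end{itemize}

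For the term with $\Delta a$, we use the Hardy-type estimate
\[
 \sup_x\int\frac{|h(y)|^2}{|x-y|}\dd y\lesssim\|h\|_2\|\nabla h\|_2\lesssim HM_h ,
\]
which follows from Hardy's inequality in $\R^3$ or from $\dot H^{1/2}$ control. This gives $HM_h\|\bar hG\|_{L^1}$.

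\textbf{Step 4 (integration).} Integrating in time over $J$ and dropping the nonnegative Hessian term gives
\[
 \int_J\int|h|^4\lesssim\sup_J|\mathcal M|+\text{forcing},
\]
which is \eqref{eq:forcedIM}.

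\textbf{Justification.} The unbounded and nonsmooth weight $|x-y|$ is handled by replacing it with $\langle x-y\rangle_\epsilon$. For smooth, rapidly decreasing $h$ and $G$ we then let $\epsilon\to0$ by dominated convergence. The constants depend only on $C_h$, through the Bernstein bound $\|\nabla h\|_2\lesssim C_hH\|h\|_2$.
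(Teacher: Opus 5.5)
Your proposal is correct and follows essentially the same route as the paper. Both arguments use the tensor-product interaction Morawetz identity on $\R^6$ with $a=|x-y|$, bound the endpoint term by Bernstein, control the forcing terms through $|\nabla a|\le\sqrt2$ and the Hardy bound for $\Delta a\sim|x-y|^{-1}$, and regularize with $\sqrt{|x-y|^2+\varepsilon^2}$. The only cosmetic difference is that you write the forcing contribution directly as $2\Ren\int\overline{\mathcal G}(2\nabla a\cdot\nabla\Psi+\Delta a\,\Psi)$, whereas the paper writes it as the momentum bracket $\{\mathcal G,Z\}_p$ and integrates by parts, which produces the same terms.
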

\begin{proof}
Set $\rho=|h|^2$, $j=\Imn(\bar h\nabla h)$, and
\[
 \mathcal M=2\iint\rho(y)\frac{x-y}{|x-y|}\cdot j(x)\dd x\dd y,
 \qquad |\mathcal M|\le2\|h\|_2^3\|\nabla h\|_2\lesssim HM_h^2 .
\]
Set $Z(x,y)=h(x)h(y)$, $a(x,y)=|x-y|$, and
$\mathcal G=G(x)h(y)+h(x)G(y)$.
Applying the virial identity in \cite[Lemma~5.3]{Visan07} to $Z$ on
$\R^6$ gives
\begin{equation}\label{eq:tensorvirial}
 \frac{\dd}{\dd t}\left(2\int\nabla a\cdot\Imn(\bar Z\nabla Z)\right)
 =-\int\Delta_6^2a\,|Z|^2
 +4\int a_{jk}\Ren(\partial_j\bar Z\,\partial_kZ)
 +2\int\nabla a\cdot\{\mathcal G,Z\}_p .
\end{equation}
Here derivatives are taken in $\R^6$, $a_{jk}=\partial_j\partial_k a$,
repeated indices are summed, and
$\{f,g\}_p=2\Ren(f\nabla\bar g)-\nabla\Ren(f\bar g)$.
Moreover,
\[
 2\int\nabla a\cdot\Imn(\bar Z\nabla Z)=2\mathcal M,\qquad
 -\Delta_6^2a=32\pi\delta(x-y),\qquad (a_{jk})\ge0 .
\]
The diagonal term is $32\pi\int_{\R^3}|h(x)|^4\dd x$ and therefore
controls $\|h\|_{L^4_{t,x}(J)}^4$ after integration in time.
To justify the identity for $a(x,y)=|x-y|$, we use the smooth approximation
\[
 a_\varepsilon(x,y)=\sqrt{|x-y|^2+\varepsilon^2},\qquad
 -\Delta_6^2a_\varepsilon
 =\frac{60\varepsilon^4}{(|x-y|^2+\varepsilon^2)^{\frac{7}{2}}}
 \longrightarrow32\pi\delta(x-y).
\]
Its Hessian is nonnegative, $|\nabla_xa_\varepsilon|\le1$, and
$\Delta_xa_\varepsilon\lesssim|x-y|^{-1}$, uniformly in $\varepsilon$.
The $x$-component of the bracket is
\[
 \{\mathcal G,Z\}_{p,x}
 =\rho(y)\{G,h\}_p(x)+2\Imn(\bar hG)(y)j(x).
\]
Integrating the derivative in the bracket by parts and using symmetry,
we obtain
\begin{align*}
 \left|\int\nabla a\cdot\{\mathcal G,Z\}_p\right|
 &\lesssim M_h\|\nabla\bar h\,G\|_{L^1_x}\\
 &\quad+\left(\sup_x\int\frac{|h(y)|^2}{|x-y|}\dd y
                  +\|h\|_2\|\nabla h\|_2\right)\|\bar hG\|_{L^1_x}.
\end{align*}
Hardy's inequality and Bernstein's inequality then give
\begin{align*}
 \sup_x\int\frac{|h(y)|^2}{|x-y|}\dd y
 &\lesssim\|h\|_2\|\nabla h\|_2\lesssim HM_h,\\
 \left|\int\nabla a\cdot\{\mathcal G,Z\}_p\right|
 &\lesssim HM_h\|\bar hG\|_{L^1_x}
                    +M_h\|\nabla\bar h\,G\|_{L^1_x}.
\end{align*}
Integrate \eqref{eq:tensorvirial}, discard the nonnegative Hessian term,
and let $\varepsilon\downarrow0$ to obtain \eqref{eq:forcedIM}.
\end{proof}

Taking $h=Q_Hu$ and $G=Q_HF(u)$, Proposition~\ref{prop:source}
bounds both terms containing $G$ in \eqref{eq:forcedIM}.
\begin{corollary}\label{cor:diagonal}
Under \eqref{eq:boot}, let $Q_H$ be as in Section~\ref{sec:source}
and $H\ge H_0$. Then
\begin{equation}\label{eq:diagonal}
 \norm{Q_Hu}_{L^4_{t,x}(J)}^4
 \lesssim_s\frac1{H^3m_H^4}
 \left[1+\BB(cH)\delta_H\ell_H^3\right].
\end{equation}
\end{corollary}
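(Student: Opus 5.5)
The bound follows by applying Lemma~\ref{lem:forcedIM} with $h=Q_Hu$ and $G=G_H=Q_HF(u)$, and then estimating each of the three terms on its right-hand side. The hypotheses of the lemma hold because $\supp\widehat h\subset\{|\xi|\le C_0H\}$. Since $u$ is a Schwartz solution on the compact interval $J$, both $h$ and $G$ are smooth and rapidly decreasing.

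For the endpoint term, \eqref{eq:QHkernel} gives $Q_Hu=\sum_{M\sim H}Q_HP_Mu$, with boundedly many terms. Using the uniform kernel bound for $Q_H$ and \eqref{eq:massfreq},
\[
 M_h=\sup_J\|Q_Hu\|_2^2\lesssim_s\sum_{M\sim H}b_M^2\lesssim_s\frac{1}{H^2m_H^2},
\]
where the last step uses \eqref{eq:thetastability} for $m_M\sim_s m_H$. Consequently
\[
 HM_h^2\lesssim_s\frac{1}{H^3m_H^4}.
\]
This is the first term in \eqref{eq:diagonal}.

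For the two forcing terms, we use the definition \eqref{eq:DHdef} with $x_0=0$:
\[
 \|\bar hG\|_{L^1_{t,x}}+H^{-1}\|\nabla\bar h\,G\|_{L^1_{t,x}}\le D_H.
\]
It follows that
\[
 HM_h\|\bar hG\|_{L^1_{t,x}}+M_h\|\nabla\bar h\,G\|_{L^1_{t,x}}
 \le 2HM_hD_H.
\]
For $H\ge H_0$, Proposition~\ref{prop:source} applies, because \eqref{eq:pair} holds under \eqref{eq:boot} and \eqref{eq:roughA} by Lemma~\ref{lem:pair}. It gives
\[
 D_H\lesssim_s\BB(cH)\frac{\ell_H^3}{H^3m_H^4}.
\]
Multiplying by $HM_h\lesssim_s(Hm_H^2)^{-1}=\delta_H$ yields
\[
 HM_hD_H\lesssim_s\BB(cH)\frac{\delta_H\ell_H^3}{H^3m_H^4}.
\]
This is the second term in \eqref{eq:diagonal}. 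Adding the three bounds proves the corollary.

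There is no serious obstacle. The only points to check are that the constant in Lemma~\ref{lem:forcedIM} depends only on $C_0$, and that the bound on $M_h$ is uniform in $t\in J$. Both are immediate from the fixed cutoff $\varphi$ and from \eqref{eq:massfreq}.
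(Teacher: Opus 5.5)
Your proof is correct and is essentially the paper's argument: you bound $M_h\lesssim_s b_H^2$, apply Lemma~\ref{lem:forcedIM} with $G=Q_HF(u)$, and control both forcing terms by $HM_hD_H\lesssim_s\delta_H D_H$ using Proposition~\ref{prop:source}. The paper's proof is the same computation written in two lines.
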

\begin{proof}
Since $M_h\lesssim b_H^2$, \eqref{eq:forcedIM} and
\eqref{eq:sourcebound} give
\begin{align*}
 \|Q_Hu\|_{L^4_{t,x}}^4
 &\lesssim_s Hb_H^4+Hb_H^2D_H\\
 &\lesssim_s\frac1{H^3m_H^4}
       [1+\BB(cH)\delta_H\ell_H^3].
\qedhere
\end{align*}
\end{proof}

H\"older's inequality now controls products at comparable frequencies,
uniformly under fixed spatial translations of either factor.

\subsection{Separated frequencies}
We now choose the direction and the cutoff frequency $L$ in
Proposition~\ref{prop:flux}. For a threshold $1\le R\le\min\{H,N\}$,
we need $K\lesssim L\ll H$ for frequency separation and $L\gtrsim R$
for the recursive estimate, even when $K$ is arbitrarily small.

Choose a finite smooth partition of unity $\sum_{\nu=1}^{J_*}\chi_\nu=1$
on the unit sphere, with directions $\omega_\nu$ and a fixed $\gamma>0$
such that
\[
 \supp\chi_\nu\subset\{\theta\in\mathbb S^2:
                 \theta\cdot\omega_\nu\ge2\gamma\},\qquad
 Q_H^{(\nu)}=\chi_\nu(D/|D|)P_H.
\]
The six signed coordinate directions suffice for sufficiently small
fixed $\gamma$. These multipliers have the form in
Section~\ref{sec:source}, and
\[
 \xi\cdot\omega_\nu\ge\gamma H
 \quad\text{on }\supp\widehat{Q_H^{(\nu)}u},\qquad
 \|u_H(\cdot+x_0)u_K\|_{L^2_{t,x}(J)}^2
 \le J_*\sum_{\nu=1}^{J_*}
       \|(Q_H^{(\nu)}u)(\cdot+x_0)u_K\|_{L^2_{t,x}(J)}^2.
\]
Formula~\eqref{eq:QHkernel} gives uniform kernel bounds for this fixed
finite family. Fix one component and write $h=Q_Hu$ and
$\omega=\omega_\nu$.

With the multipliers and the smoothing kernel fixed, choose $C_*$
and then $\Gamma$ sufficiently large in terms of
Proposition~\ref{prop:flux}.
For $H/K\ge\Gamma$, choose the smallest dyadic $L$ satisfying
\begin{equation}\label{eq:Lchoice}
 L\ge\max\{4K,\min(H,N)/C_*\},
\end{equation}
and set $r=\eps L$.
Then $L/H\le\max\{8/\Gamma,2/C_*\}$.
We next choose $\eps$ so that $2r\le L/256$.
The constants $C_*,\Gamma,\eps$ are fixed before $N$ is chosen.
The resulting $L,r$ satisfy
\begin{equation}\label{eq:scaleproperties}
 2K\le L,\qquad 6L\le\gamma H/2,\qquad
 2r\le L/256,\qquad K/r\le C_\eps,
 \qquad m_L\sim_s m_K.
\end{equation}
Indeed, either $L\sim K$, or both $K$ and $L$ lie below $N$ and their
weights equal one. Moreover,
\begin{equation}\label{eq:thresholdproperty}
 H\ge R,\quad R\le N
 \quad\Longrightarrow\quad
 L\ge cR
\end{equation}
for a fixed $c>0$.

With this $L$, take $w=P_{\le L}u$ as in \eqref{eq:wvr}, and let
$\CC_{H,L}$ be the quantity in \eqref{eq:fluxC} for these choices
of $h,w,r,\omega$. With $G=Q_HF(u)$, the interaction weights satisfy
\begin{equation}\label{eq:twoweights}
 \norm{q_e^\eta}_{L^\infty_{t,x}}\le E_L,
 \qquad
 \norm{q_r^\eta}_{L^\infty_{t,x}}\lesssim_s b_H^2,
 \qquad
 \supp\widehat{q_r^\eta}\subset B(0,L/256).
\end{equation}
Proposition~\ref{prop:source} bounds $\|hG\|_{L^1_{t,x}}$, while
Proposition~\ref{prop:exterior} bounds the weighted term
$\DD_{q_r^\eta}(w,R_L)$ in \eqref{eq:forcedflux}.

\begin{proposition}\label{prop:actualflux}
Fix $\kappa\in(0,1)$. There is $N_{\mathrm{flux}}=N_{\mathrm{flux}}(s,\kappa)$,
independent of $A,\mu,J$, such that, if $N\ge N_{\mathrm{flux}}$,
$N^\kappa\le R\le\min\{H,N\}$ and $H/K\ge\Gamma$, then
\begin{equation}\label{eq:fluxfeedback}
 \begin{split}
 H\CC_{H,L}\lesssim_s b_H^2m_L^{-2}\bigl[1
 &+\BB(cH)\delta_H\ell_H^3\\
 &+\BB(cL)\delta_L\ell_L^3(1+\delta_L)\bigr].
 \end{split}
\end{equation}
Furthermore,
\begin{equation}\label{eq:actualreconstruction}
 \norm{h(t,x+x_0)u_K(t,x)}_{L^2_{t,x}(J)}^2
 \lesssim\log(2+L\mu)\CC_{H,L}
\end{equation}
for every fixed $x_0$.
\end{proposition}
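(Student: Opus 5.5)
The estimate follows by applying Proposition~\ref{prop:flux} to $h=Q_Hu$, $G=Q_HF(u)$, $w=P_{\le L}u$, $R=R_L$, with $a=\mu^{-1}$ and $r=\eps L$. We then bound each term on the right of \eqref{eq:forcedflux} by the estimates of Section~\ref{sec:recursion}. The proof has three steps.

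\textbf{Step 1: the separation hypothesis \eqref{eq:fluxseparation}.} The support condition \eqref{eq:abstractsupports} holds by the choice of the angular projection $Q_H^{(\nu)}$ and by $\supp\widehat w\subset B(0,2L)$. By Lemma~\ref{lem:actualEL}, $E_w=E_L\lesssim_s m_L^{-2}$. We need $H\ge C(L+E_L)$. The bound $L\le\max\{8/\Gamma,2/C_*\}H$ from \eqref{eq:Lchoice} handles the term $L$ once $C_*$ and $\Gamma$ are large. For $E_L$:
\begin{itemize}
\item If $L\le N$, then $E_L\lesssim_s1\le H/C$, since $H\ge R\ge N^\kappa$ and $N\ge N_{\mathrm{flux}}$.
\item If $L>N$, then $m_L^{-2}=(L/N)^{2-2s}\le L^{2-2s}$. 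Because $2-2s<1$, this is at most $L/C\lesssim H/C$ once $L\ge N$ is large.
\end{itemize}
Only $N_{\mathrm{flux}}(s,\kappa)$ enters here, which is why it is independent of $A$, $\mu$ and $J$.

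\textbf{Step 2: bounding the right side of \eqref{eq:forcedflux}.} There are three terms.
\begin{itemize}
\item The endpoint term satisfies $M_hE_w\lesssim_s b_H^2m_L^{-2}$, by \eqref{eq:massfreq} and Lemma~\ref{lem:actualEL}.
\item For the forcing term, Proposition~\ref{prop:source} gives $\|hG\|_{L^1_{t,x}}\le D_H\lesssim_s\BB(cH)b_H^2\delta_H\ell_H^3$. Multiplying by $E_w\lesssim m_L^{-2}$ gives the middle term of \eqref{eq:fluxfeedback}.
\item For the weighted error, \eqref{eq:twoweights} gives $\|q_r^\eta\|_\infty\lesssim b_H^2$ and $\supp\widehat{q_r^\eta}\subset B(0,L/256)$. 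Proposition~\ref{prop:exterior} then applies with $cL\ge1$, which holds since $L\ge cR\ge cN^\kappa$ by \eqref{eq:thresholdproperty}. It gives $b_H^2\BB(cL)\ell_L^3(Lm_L^4)^{-1}(1+\delta_L)=b_H^2m_L^{-2}\BB(cL)\delta_L\ell_L^3(1+\delta_L)$, uniformly in $\eta$.
\end{itemize}
Summing the three terms yields \eqref{eq:fluxfeedback}. The constant $c$ must be taken as the minimum of the constants in Propositions~\ref{prop:source} and \ref{prop:exterior}, using that $\BB$ is nonincreasing.

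\textbf{Step 3: the reconstruction bound \eqref{eq:actualreconstruction}.} We write $u_K=T_Kw$ with $T_K=P_K$. This is valid because $\chi(\xi/L)=1$ on $\supp\widehat{u_K}$ when $2K\le L$, so $P_KP_{\le L}=P_K$. The kernel of $T_K$ has uniformly bounded $L^1$ norm, and $K\le L/2$. Applying \eqref{eq:fluxproduct} with $K/r\le C_\eps$ and $\log(2+L/a)=\log(2+L\mu)$ gives \eqref{eq:actualreconstruction}.

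The main obstacle is Step 1 in the regime $L>N$, where $E_L$ grows. The sublinear growth of $m_L^{-2}$ in $L$, which uses $s>\frac12$, is what absorbs it. The remaining care is in checking that all the constants ($C_*$, $\Gamma$, $\eps$, $c$) are fixed before $N$ is chosen.
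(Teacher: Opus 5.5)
Your proposal is correct and follows essentially the same route as the paper. You apply Proposition~\ref{prop:flux} with $a=\mu^{-1}$, bound the three right-hand terms using \eqref{eq:massfreq}, Lemma~\ref{lem:actualEL}, and Propositions~\ref{prop:source} and~\ref{prop:exterior} with a common $c$, and recover the product through $u_K=P_Kw$ and \eqref{eq:fluxproduct}. The only difference is cosmetic: you check \eqref{eq:fluxseparation} by splitting into $L\le N$ and $L>N$, whereas the paper treats both cases at once via $E_L/H\lesssim_s m_L^{-2}/H\le\delta_L\lesssim_s R^{-1}\le N^{-\kappa}$ from \eqref{eq:sumdelta}.
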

\begin{proof}
Decreasing $c$ if necessary, we use the same fixed constant in
Propositions~\ref{prop:source} and \ref{prop:exterior}; their
required lower bounds on $H$ and $L$ hold for sufficiently large $N$ by
\eqref{eq:thresholdproperty}. By \eqref{eq:sourcebound},
\eqref{eq:exteriorbound}, and \eqref{eq:twoweights},
\begin{align*}
 M_hE_L&\lesssim_sb_H^2m_L^{-2},\\
 E_L\|hG\|_{L^1_{t,x}}
 &\lesssim_sm_L^{-2}\BB(cH)b_H^2\delta_H\ell_H^3,\\
 \sup_\eta\int_J|\DD_{q_r^\eta}(w,R_L)|
 &\lesssim_sb_H^2\BB(cL)\frac{\ell_L^3}{Lm_L^4}(1+\delta_L).
\end{align*}

The choices of $C_*$ and $\Gamma$ make $L/H$ sufficiently small.
Moreover, Lemma~\ref{lem:actualEL}, \eqref{eq:thresholdproperty},
and \eqref{eq:sumdelta} with $p=0$ give
\[
 E_L/H\lesssim_sm_L^{-2}/H\le\delta_L
 \lesssim_s R^{-1}\le N^{-\kappa}.
\]
Taking $N_{\mathrm{flux}}(s,\kappa)$ large ensures
\eqref{eq:fluxseparation} and all fixed lower-frequency thresholds,
without using $A$ or $\mu$. Thus \eqref{eq:forcedflux} gives
\eqref{eq:fluxfeedback}. Since $2K\le L$, the symbol of $P_{\le L}$
is one on the support of $P_K$, and hence $u_K=P_Kw$. Now
$K/r\le C_\eps$ and $a=\mu^{-1}$, so \eqref{eq:fluxproduct}
gives \eqref{eq:actualreconstruction}.
\end{proof}

\subsection{Frequency induction}\label{sec:iteration}
For the related frequency induction in the radial setting, see
\cite[Section~4]{Dodson19}. Here we estimate $\BB(R)$.
Combining Corollary~\ref{cor:diagonal} and
Proposition~\ref{prop:actualflux} gives the desired recursion.
\begin{proposition}[Frequency recursion]\label{prop:recursion}
Fix $\frac{1}{2}<s<1$ and $\kappa\in(0,1)$. Under \eqref{eq:boot} and
\eqref{eq:roughA}, there are $c_1\in(0,1)$ and $C_s<\infty$,
depending only on $s$ and the fixed multipliers, such that
\begin{equation}\label{eq:recursion}
 \BB(R)\le C_s+C_s\frac{\ell_N^3}{R}\BB(c_1R)
\end{equation}
whenever $N\ge N_{\mathrm{rec}}$ and $N^\kappa\le R\le N$,
where $N_{\mathrm{rec}}=N_{\mathrm{rec}}(s,\kappa)$ depends only
on $s,\kappa$ and the fixed multiplier choices.
\end{proposition}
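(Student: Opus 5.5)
To prove \eqref{eq:recursion}, fix $H\ge R$ and $K\le H$, and bound $\frac{H^3m_H^2m_K^2}{\ell_H}\|u_H(\cdot+x_0)u_K\|_{L^2_{t,x}}^2$ uniformly in $x_0$. We split into two regimes according to whether $H/K\ge\Gamma$, with $\Gamma$ the constant fixed before \eqref{eq:Lchoice}. In both regimes the error coefficients $\delta_H\ell_H^3$ and $\delta_L\ell_L^3(1+\delta_L)$ will be bounded by $C_s\ell_N^3/R$. For $H\le N$ this follows from \eqref{eq:sumdelta} with $p=3$, using $\ell_H\le\ell_N+C\log(H/N)$. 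For $H>N$ it follows from the decay $\delta_H\lesssim N^{-1}(H/N)^{1-2s}$, which absorbs the logarithmic growth of $\ell_H$. We also have $\delta_L\lesssim1$, since $L\gtrsim R\gg1$. The thresholds in Propositions~\ref{prop:source} and \ref{prop:exterior}, namely $H\ge H_0$ and $cL\ge1$, hold once $N^\kappa$ is large, which fixes $N_{\mathrm{rec}}$.

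\textbf{Comparable frequencies ($K\le H<\Gamma K$).} Apply H\"older's inequality:
\[
\|u_H(\cdot+x_0)u_K\|_{L^2_{t,x}}^2\le\|u_H\|_{L^4_{t,x}}^2\|u_K\|_{L^4_{t,x}}^2.
\]
Bound each factor by Corollary~\ref{cor:diagonal} with $Q_H=P_H$ and $Q_K=P_K$. Since $K\sim_\Gamma H$, we have $K\ge H/\Gamma\ge R/\Gamma$, and $m_K\sim m_H$ and $\ell_K\sim\ell_H$. This gives
\[
\frac{H^3m_H^2m_K^2}{\ell_H}\|u_Hu_K\|^2\lesssim_s\frac1{\ell_H}\bigl[1+\BB(cH)\delta_H\ell_H^3\bigr]^{1/2}\bigl[1+\BB(cK)\delta_K\ell_K^3\bigr]^{1/2}.
\]
By AM--GM and the monotonicity of $\BB$, this is at most $C_s+C_s\frac{\ell_N^3}{R}\BB(c_1R)$, where $c_1\le c/\Gamma$. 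One could even discard the $1/\ell_H$ gain here.

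\textbf{Separated frequencies ($H/K\ge\Gamma$).} Decompose $u_H$ by the angular partition into at most $J_*$ pieces $h=Q_H^{(\nu)}u$. For each piece, choose $L$ by \eqref{eq:Lchoice}. By \eqref{eq:thresholdproperty}, and because $R\le\min\{H,N\}$, we have $L\ge cR$. Proposition~\ref{prop:actualflux} applies, and combining \eqref{eq:actualreconstruction} with \eqref{eq:fluxfeedback} gives
\[
\|h(\cdot+x_0)u_K\|^2\lesssim_s\log(2+L\mu)\,H^{-1}b_H^2m_L^{-2}\bigl[1+\BB(cH)\delta_H\ell_H^3+\BB(cL)\delta_L\ell_L^3(1+\delta_L)\bigr].
\]
Now use $H^{-1}b_H^2=(H^3m_H^2)^{-1}$, $m_L\sim_sm_K$ from \eqref{eq:scaleproperties}, and $\log(2+L\mu)\lesssim\ell_H$, which holds because $L\le H$ and $A\ge2$. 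Then use $\BB(cH),\BB(cL)\le\BB(c_1R)$ and the coefficient bound above. Summing over the $J_*$ directions yields the separated-frequency contribution $C_s+C_s\ell_N^3R^{-1}\BB(c_1R)$.

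Finally, take the maximum with $1$ in the definition \eqref{eq:Bdef}. The main obstacle is the bookkeeping that turns the error coefficients into $\ell_N^3/R$ uniformly for all $H\ge R$, including $H\gg N$, where $\ell_H$ exceeds $\ell_N$. This needs the decay of $\delta_H$ above $N$ from \eqref{eq:deltahigh}, together with the fact that $c_1$ can be chosen uniformly across Propositions~\ref{prop:source}, \ref{prop:exterior} and the comparable-frequency case.
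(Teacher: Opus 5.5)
Your proposal is correct and follows the paper's proof: H\"older's inequality with Corollary~\ref{cor:diagonal} for $H/K\le\Gamma$ (absorbing $(H/K)^{3/2}\le\Gamma^{3/2}$), and Proposition~\ref{prop:actualflux} for $H/K\ge\Gamma$ with the normalization \eqref{eq:normalization}, monotonicity of $\BB$, and \eqref{eq:sumdelta}, summed over the angular partition. Your separate treatment of $H>N$ is not needed, since \eqref{eq:sumdelta} already takes the supremum over all $H\ge cR$.
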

\begin{proof}
Choose $c_1$ sufficiently small for all the fixed threshold
reductions below, independently of $N,A,\mu,J$ and $\kappa$.
Choose $N_{\mathrm{rec}}$ large enough for Lemma~\ref{lem:local},
Proposition~\ref{prop:actualflux}, and $c_1N_{\mathrm{rec}}^\kappa\ge1$.
Enlarge it so that $N_{\mathrm{rec}}^\kappa/\Gamma$ and
$N_{\mathrm{rec}}^\kappa/C_*$ exceed all fixed lower-frequency
thresholds in Corollary~\ref{cor:diagonal} and
Propositions~\ref{prop:source} and \ref{prop:exterior}.
Fix $H\ge R$, $K\le H$, and $x_0\in\R^3$. If $H/K\le\Gamma$, set
\[
 X=\BB(cH/\Gamma),\qquad
 d=\max(\delta_H\ell_H^3,\delta_K\ell_K^3).
\]
Our choice of $c_1$ ensures $cH/\Gamma\ge c_1R$, so
$X\le\BB(c_1R)$. Corollary~\ref{cor:diagonal} gives
\begin{align*}
 \|u_H(\cdot+x_0)u_K\|_{L^2_{t,x}}^2
 &\le\|u_H\|_{L^4_{t,x}}^2\|u_K\|_{L^4_{t,x}}^2\\
 &\lesssim_s
 \frac{\sqrt{1+X\delta_H\ell_H^3}\sqrt{1+X\delta_K\ell_K^3}}
       {H^{\frac{3}{2}}K^{\frac{3}{2}}m_H^2m_K^2}\\
 &\le\frac{C_s(1+Xd)}{H^{\frac{3}{2}}K^{\frac{3}{2}}m_H^2m_K^2},\\
 d&\lesssim_{s,\Gamma}\ell_N^3/R,\qquad
 (H/K)^{\frac{3}{2}}/\ell_H\le\Gamma^{\frac{3}{2}}.
\end{align*}
For $H/K\ge\Gamma$, Proposition~\ref{prop:actualflux} gives
\begin{equation}\label{eq:normalization}
 \frac{H^3m_H^2m_K^2}{\ell_H}
       \frac{b_H^2m_L^{-2}}H\log(2+L\mu)
 =\frac{m_K^2}{m_L^2}\frac{\log(2+L\mu)}{\ell_H}\lesssim_s1.
\end{equation}
Moreover, \eqref{eq:thresholdproperty} and Lemma~\ref{lem:sums} give
\[
 cH,cL\ge c_1R,\qquad
 \max(\delta_H\ell_H^3,\delta_L\ell_L^3(1+\delta_L))
       \lesssim_s\ell_N^3/R.
\]
Since $\BB$ is nonincreasing, each occurrence of $\BB$ on the
right is bounded by $\BB(c_1R)$. Summing over the finite angular
partition and taking the suprema over $H,K,x_0$ proves
\eqref{eq:recursion}.
\end{proof}

The recursion imposes no positive lower bound on $K$ and no upper
bound on $H$. If $H\gg N$ and $K\ll1$, the choice
\eqref{eq:Lchoice} gives $L\sim N/C_*$; if $K>N$, it gives
$L\sim K$ and $m_L\sim m_K$.

To complete the iteration, assume that $C_0>0$ and $p_0\ge0$ are
independent of $N,J$ and
\begin{equation}\label{eq:polynomialparameters}
 A+\mu\le C_0N^{p_0}.
\end{equation}
Then $\ell_N\le C(C_0,p_0)\log(2+N)$. Using the bound
$\BB(R)\lesssim_s A^4$ from Lemma~\ref{lem:pair} at the last threshold,
iteration makes the coefficient multiplying $A^4$ decay faster than
any fixed inverse power of $N$.
The parameter choices in Section~\ref{sec:completion} satisfy
\eqref{eq:polynomialparameters}.

\begin{proposition}[Uniform high-frequency bilinear bound]\label{prop:highpair}
Fix $c_E\in(0,1)$ and $\kappa\in(0,1)$. Under \eqref{eq:boot},
\eqref{eq:roughA} and \eqref{eq:polynomialparameters}, for all
$N\ge N_0(s,C_0,p_0,c_E,\kappa)$,
\begin{equation}\label{eq:Bbounded}
 \BB(c_EN)\le C_s.
\end{equation}
In particular, uniformly over fixed relative translations and all $H\ge c_EN$, $K\le H$,
\begin{equation}\label{eq:highpair}
 \norm{u_H(t,x+x_0)u_K(t,x)}_{L^2_{t,x}(J)}^2
 \le C_s\frac{\ell_H}{H^3m_H^2m_K^2}\theta_{H,K}^2.
\end{equation}
The threshold $N_0$ is independent of $|J|$ and of higher Sobolev norms.
\end{proposition}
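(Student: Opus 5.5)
We iterate the recursion of Proposition~\ref{prop:recursion} downward from $R_0=c_EN$. Set $R_j=c_1^jR_0$ and let $n$ be the last index with $R_{n}\ge N^{\kappa}$. Thus $n\asymp(1-\kappa)\log N/\log(1/c_1)$ and $R_{n+1}\ge c_1N^\kappa$. We take $N\ge N_{\mathrm{rec}}(s,\kappa)$, and also large enough that $c_EN\le N$ and $c_EN\ge N^\kappa$. Then \eqref{eq:recursion} applies at every $R_j$ with $0\le j\le n$ and gives
\[
 \BB(R_j)\le C_s+\epsilon_j\BB(R_{j+1}),\qquad
 \epsilon_j=C_s\frac{\ell_N^3}{R_j}\le C_s\frac{\ell_N^3}{N^\kappa}.
\]
By \eqref{eq:polynomialparameters}, $\ell_N\le C(C_0,p_0)\log(2+N)$, so $\epsilon:=\sup_j\epsilon_j\le C N^{-\kappa}\log^3(2+N)$. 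We enlarge $N_0$ so that $\epsilon\le\frac12$.

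Unrolling the recursion gives
\[
 \BB(R_0)\le C_s\sum_{j=0}^{n}\epsilon^j+\epsilon^{n+1}\BB(R_{n+1})
 \le 2C_s+\epsilon^{n+1}\BB(R_{n+1}).
\]
For the tail we use Lemma~\ref{lem:pair}. We need $R_{n+1}\ge1$, which holds once $c_1N^\kappa\ge1$. Then \eqref{eq:Brough} and \eqref{eq:polynomialparameters} give $\BB(R_{n+1})\lesssim_sA^4\le C_0^4N^{4p_0}$. Since $n+1\ge c(\kappa)\log N$ and $\epsilon\le N^{-\kappa/2}$ for large $N$, we have
\[
 \epsilon^{n+1}\le \exp\bigl(-\tfrac{\kappa}{2}c(\kappa)\log^2N\bigr).
\]
This decays faster than any power of $N$. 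Hence $\epsilon^{n+1}C_0^4N^{4p_0}\le1$ for $N\ge N_0(s,C_0,p_0,c_E,\kappa)$, and so $\BB(c_EN)\le2C_s+1$, which is \eqref{eq:Bbounded} after renaming $C_s$.

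None of these thresholds involves $|J|$ or higher norms. The constant $N_{\mathrm{rec}}$ depends only on $s$ and $\kappa$. The bound on $\ell_N$ uses only \eqref{eq:polynomialparameters}. The bound $\BB\lesssim_s A^4$ has a constant depending only on $s$.

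For \eqref{eq:highpair}, we apply \eqref{eq:pair} with $R=c_EN$ and insert $\BB(c_EN)\le C_s$. Squaring gives the claim with the factor $\theta_{H,K}^2$, uniformly in $x_0$. For $K\ge1$ this is just the definition \eqref{eq:Bdef}, since $\theta=1$. For $K<1$ the minimum with \eqref{eq:tiny} was already built into \eqref{eq:pair}.

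The main obstacle is bookkeeping. We must check that every intermediate threshold satisfies $N^\kappa\le R_j\le N$, which holds because $R_j\le c_EN\le N$ and $j\le n$. We must also check that the final threshold $R_{n+1}$ is still $\ge1$, so that Lemma~\ref{lem:pair} applies; the superpolynomial decay then absorbs $A^4$.
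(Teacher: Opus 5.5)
Your proposal is correct and follows the paper's proof: iterate \eqref{eq:recursion} from $c_EN$ down to the first threshold below $N^\kappa$, bound the terminal term by $\BB\lesssim_s A^4\lesssim N^{4p_0}$ via Lemma~\ref{lem:pair}, absorb it by the $\exp(-c(\log N)^2)$ decay of the product, and then deduce \eqref{eq:highpair} from \eqref{eq:pair} with $R=c_EN$. The only difference is that you bound every ratio uniformly by $\epsilon\le N^{-\kappa/2}$, whereas the paper computes the product $\prod_j d_N/R_j$ exactly. Your bound has a smaller constant in front of $(\log N)^2$, $\tfrac{\kappa(1-\kappa)}{2|\log c_1|}$ instead of $\tfrac{1-\kappa^2}{2|\log c_1|}$, but it is still sufficient.
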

\begin{proof}
Increase $N_0$ so that $c_EN\ge N^\kappa$. We iterate
\eqref{eq:recursion} down to $N^\kappa$ and use the preliminary
bound $\BB\le C_sA^4$ at the last threshold. Set
\[ R_j=c_1^jc_EN,\qquad d_N=C_s\ell_N^3,\qquad k=\min\{j:R_j<N^\kappa\}. \]
\begin{align}
 \prod_{j=0}^{k-1}\frac{d_N}{R_j}
 &=\left(\frac{d_N}{c_EN}\right)^kc_1^{-k(k-1)/2},
       \label{eq:iterationproduct}\\
 k&=\frac{1-\kappa}{|\log c_1|}\log N+O(1),\nonumber\\
 \log\prod_{j=0}^{k-1}\frac{d_N}{R_j}
 &=-\frac{1-\kappa^2}{2|\log c_1|}(\log N)^2
       +O_{s,C_0,p_0,c_E,\kappa}(\log N\log\log N).
       \label{eq:iterationlog}
\end{align}
For $N\ge N_0$,
\[
 \max_{j<k}d_N/R_j\le d_N N^{-\kappa}\le\frac{1}{2},\qquad
 R_k\ge c_1N^\kappa\ge1.
\]
Using $\BB(R_k)\lesssim_s A^4$ in the iterated recursion gives
\begin{align*}
 \BB(c_EN)
 &\le C_s\sum_{j=0}^{k-1}
       \prod_{i=0}^{j-1}\frac{d_N}{R_i}
       +\left(\prod_{i=0}^{k-1}\frac{d_N}{R_i}\right)\BB(R_k)\\
 &\le2C_s+C_sA^4
       \exp[-c_\kappa(\log N)^2+O(\log N\log\log N)]
 \le C_s .
\end{align*}
Finally, \eqref{eq:highpair} follows by applying
Lemma~\ref{lem:pair} with $R=c_EN$ and using \eqref{eq:Bbounded}.
\end{proof}

The same $A,\mu,N$ apply on every subinterval of $J$. Combining
\eqref{eq:Bbounded} with Proposition~\ref{prop:energy}, with
$c_E\le2^{-16}$ fixed as there, gives
\begin{equation}\label{eq:closedenergy}
 \int_{J'}\left|\frac{\dd}{\dd t}E(I_Nu(t))\right|\dd t
 \lesssim_s\frac{\ell_N^3}{N}+\frac{\ell_N}{N^2},
 \qquad J'\subset J.
\end{equation}
Under \eqref{eq:polynomialparameters}, the right-hand side tends
to zero as $N\to\infty$, uniformly in the length of $J'$.

\section{Global existence and scattering: proof of Theorem~\ref{thm:main}}\label{sec:completion}
The uniform almost conservation estimate \eqref{eq:closedenergy}
gives a global $H^s$ bound by scaling and continuity. Scattering
then follows from the global spacetime estimates. We first take $\frac{1}{2}<s<1$ and Schwartz initial data, for which the
solution is global and smooth \cite{Cazenave,GV}. The bounds below
are uniform on bounded subsets of $H^s$, so approximation and
persistence of regularity complete the argument.

\subsection{Scaling and the uniform bound}
\begin{proposition}\label{prop:globalHs}
Let $\frac{1}{2}<s<1$ and $M\ge2$. Solutions with Schwartz initial data
satisfying $\|u_0\|_{H^s}\le M$ obey \eqref{eq:uniformball}, with a
constant depending only on $s$ and $M$.
\end{proposition}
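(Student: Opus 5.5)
The argument is the standard $I$-method continuity argument, run with the uniform almost-conservation estimate \eqref{eq:closedenergy} in place of the subinterval counting.

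\textbf{Step 1: scaling and normalization.} Fix $u_0$ Schwartz with $\|u_0\|_{H^s}\le M$. For $N$ to be chosen, rescale as in \eqref{eq:scaling}, $u_\lambda(t,x)=\lambda u(\lambda^2t,\lambda x)$, with $\lambda=d_{s,M}N^{-\frac{2(1-s)}{2s-1}}$. The standard estimate $\|\nabla I_Nu_\lambda(0)\|_2\lesssim_s N^{1-s}\lambda^{s-\frac12}\|u_0\|_{\dot H^s}$, together with the Gagliardo--Nirenberg/Sobolev bound for $\|I_Nu_\lambda(0)\|_4^4\lesssim\|I_Nu_\lambda(0)\|_{\dot H^{3/4}}^4$, gives $E(I_Nu_\lambda(0))\le\frac12$ once $d_{s,M}$ is small. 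The mass scales as $\|u_\lambda(0)\|_2=\lambda^{-1/2}\|u_0\|_2$, so
\[
 \mu:=\max\{1,\|u_\lambda(0)\|_2\}\lesssim_{s,M}N^{(1-s)/(2s-1)}.
\]
Then $A=C_s(1+\mu^3)$ from Lemma~\ref{lem:coarse} satisfies \eqref{eq:polynomialparameters} with $C_0,p_0$ depending only on $s,M$. By mass conservation, $\mu\ge\|u_\lambda(t_0)\|_2$ at every $t_0$, so \eqref{eq:boot} holds with this $\mu$ on any interval where the energy is at most one.

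\textbf{Step 2: bootstrap.} Let $\mathcal T$ be the set of $T>0$ with $\sup_{[0,T]}E(I_Nu_\lambda)\le1$; it is nonempty by continuity (the solution is smooth). On $J=[0,T]$, with $T\in\mathcal T$, the hypotheses of Proposition~\ref{prop:highpair} and \eqref{eq:closedenergy} hold, since $A,\mu,N$ are fixed. Hence
\[
 \sup_J E(I_Nu_\lambda)\le\frac12+C_s\Bigl(\frac{\ell_N^3}{N}+\frac{\ell_N}{N^2}\Bigr),
 \qquad \ell_N\lesssim_{s,M}\log N.
\]
Choose $N=N(s,M)$ so large that the right side is at most $\frac34$, and also larger than $N_0$ from Proposition~\ref{prop:highpair} and the thresholds in Lemma~\ref{lem:local}.

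Then $\mathcal T$ is closed, and by continuity of $t\mapsto E(I_Nu_\lambda(t))$ it is also open. Hence $\mathcal T=(0,\infty)$, and the same argument applies backward in time. The main delicacy is that the constant in \eqref{eq:closedenergy} is independent of $|J|$, which is exactly what Proposition~\ref{prop:highpair} supplies.

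\textbf{Step 3: undo the scaling.} The bound $E(I_Nu_\lambda(t))\le1$ controls $\|\nabla I_Nu_\lambda(t)\|_2$, which dominates the $\dot H^s$ seminorm at the level $\|u_\lambda(t)\|_{\dot H^s}\lesssim_s N^{1-s}$. Scaling back gives
\[
 \|u(t)\|_{\dot H^s}=\lambda^{\frac12-s}\|u_\lambda(\lambda^{-2}t)\|_{\dot H^s}\lesssim_s\lambda^{\frac12-s}N^{1-s}.
\]
With $N=N(s,M)$ fixed, this is $C(s,M)$. Together with mass conservation, this yields \eqref{eq:uniformball} for all $t\in\R$.
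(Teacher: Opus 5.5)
Your Steps 1 and 2 follow the paper's proof. You rescale so that $E(I_Nu_\lambda(0))\le\frac12$, observe that $A+\mu$ is polynomial in $N$ with constants depending only on $s,M$, and invoke Proposition~\ref{prop:highpair} and \eqref{eq:closedenergy} with $N=N(s,M)$. You then close by continuity and reverse time. For the potential term at $t=0$ you still need to bound $\|I_Nu_\lambda(0)\|_{\dot H^{3/4}}$. The interpolation $\|I_Nf\|_{\dot H^{3/4}}^2\le\|f\|_{\dot H^{1/2}}\|\nabla I_Nf\|_2$, combined with scale invariance of $\dot H^{1/2}$, does this.

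\textbf{The gap is in Step 3.} Your claim that $\|\nabla I_Nu_\lambda(t)\|_2$ controls $\|u_\lambda(t)\|_{\dot H^s}$ at the level $N^{1-s}$ is false for $s<1$.
\begin{itemize}
\item The symbol ratio $|\xi|^s/(|\xi|m_N(\xi))=|\xi|^{s-1}$ is unbounded as $\xi\to0$.
\item The inequality $|\xi|^s\le N^{1-s}|\xi|m_N(\xi)$ holds only for $|\xi|\ge N^{-1}$, so the very low frequencies of $u_\lambda(t)$ are not controlled by the modified energy at all.
\item The estimate $\|\nabla I_Nf\|_2\lesssim N^{1-s}\|f\|_{\dot H^s}$, which you used correctly at $t=0$, goes in one direction only.
\end{itemize}

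\textbf{The fix is to use the conserved rescaled mass}, which is large: $\|u_\lambda(t)\|_2\le\mu\le\max\{1,\lambda^{-1/2}M\}$.
\begin{itemize}
\item Below $N$, interpolation gives $\|(u_\lambda)_{\le N}\|_{\dot H^s}\le\|u_\lambda\|_2^{1-s}\|\nabla(u_\lambda)_{\le N}\|_2^{s}\lesssim_s\mu^{1-s}$.
\item Above $N$, the identity $|\xi|^s=N^{s-1}|\xi|m_N(\xi)$ for $|\xi|\ge2N$ gives $\|(u_\lambda)_{>N}\|_{\dot H^s}\lesssim_sN^{s-1}$.
\item Scaling back, $\|u(t)\|_{\dot H^s}\lesssim_s\lambda^{\frac12-s}(\mu^{1-s}+N^{s-1})$.
\end{itemize}
This is bounded in terms of $s,M$ only, because $N$ and $\lambda$ are fixed by $s,M$; it is exactly \eqref{eq:returnHs}. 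So the conclusion survives, but only once the mass enters.

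The level $N^{1-s}$ is also not what the argument delivers. The available low-frequency bound is $\mu^{1-s}\lesssim_{s,M}N^{(1-s)^2/(2s-1)}$, and its exponent exceeds $1-s$ when $s<\frac23$.
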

\begin{proof}
The scaling of \eqref{eq:NLS} gives
\[
 \|u_{0,\lambda}\|_2=\lambda^{-\frac{1}{2}}\|u_0\|_2,\qquad
 \|u_{0,\lambda}\|_{\dot H^s}
       =\lambda^{s-\frac{1}{2}}\|u_0\|_{\dot H^s},\qquad
 \|u_{0,\lambda}\|_{\dot H^{\frac{1}{2}}}=\|u_0\|_{\dot H^{\frac{1}{2}}}.
\]
By Sobolev embedding and interpolation,
\[
 \|I_Nf\|_4^4\le\|I_Nf\|_3^2\|I_Nf\|_6^2
 \lesssim\|f\|_{\dot H^{\frac{1}{2}}}^2\|\nabla I_Nf\|_2^2.
\]
Consequently,
\begin{equation}\label{eq:initialenergy}
 E(I_Nu_{0,\lambda})
 \lesssim_s N^{2(1-s)}\lambda^{2s-1}M^2(1+M^2).
\end{equation}
Fix $\kappa=\frac{1}{2}$, $c_E\le2^{-16}$, and choose $d_{s,M}>0$
sufficiently small that
\begin{equation}\label{eq:lambdachoice}
 \lambda=d_{s,M}N^{-2(1-s)/(2s-1)},\qquad
 E(I_Nu_{0,\lambda})\le\frac{1}{2}.
\end{equation}
Use the same parameters
\[
 \mu_\lambda=\max\{1,\lambda^{-\frac{1}{2}}M\},\qquad
 A=C_s(1+\mu_\lambda^3)
\]
for every datum in the ball and every interval on which the modified
energy is at most one. The constant in $A$ is chosen as in
\eqref{eq:roughA}. In particular,
\begin{equation}\label{eq:scaledparameters}
 \mu_\lambda\lesssim_{s,M}N^{(1-s)/(2s-1)},\qquad
 A\lesssim_{s,M}N^{3(1-s)/(2s-1)},\qquad
 \ell_N\lesssim_{s,M}\log(2+N).
\end{equation}
Thus \eqref{eq:polynomialparameters} holds with constants depending
only on $s,M$. Proposition~\ref{prop:highpair} and the conditional
estimate of Proposition~\ref{prop:energy} give
\eqref{eq:closedenergy} on each such interval, provided
$N=N(s,M)$ is sufficiently large. Increase $N$ so that
\[
 C_s\left(\frac{\ell_N^3}{N}+\frac{\ell_N}{N^2}\right)\le\frac{1}{4}.
\]
Applying \eqref{eq:closedenergy} on $[0,T]$ before the first exit
from $E(I_Nu_\lambda)<1$ improves the bound to
$E(I_Nu_\lambda(t))\le\frac{3}{4}$. Continuity rules out a finite exit time.
Applying the same argument to $\overline{u_\lambda(-t)}$, we obtain
\[
 \sup_{t\in\R}E(I_Nu_\lambda(t))\le\frac{3}{4}.
\]
To recover the $H^s$ bound, interpolate below $N$ and use the
definition of $I_N$ above $N$:
\[
 \|(u_\lambda)_{\le N}\|_{\dot H^s}
 \lesssim_s\mu_\lambda^{1-s},\qquad
 \|(u_\lambda)_{>N}\|_{\dot H^s}
 \lesssim_s N^{s-1}.
\]
Returning to the original variables and using mass conservation gives
\begin{equation}\label{eq:returnHs}
 \sup_{t\in\R}\|u(t)\|_{H^s}
 \lesssim_s M+\lambda^{\frac{1}{2}-s}
       (\mu_\lambda^{1-s}+N^{s-1})\le C_{s,M}.
 \qedhere
\end{equation}
\end{proof}

The usual local well-posedness and stability theory
\cite{Cazenave,CW,TaoBook} now extends this bound to arbitrary
$H^s$ data. Indeed, approximate $u_0$ by Schwartz data in one fixed
$H^s$ ball. Proposition~\ref{prop:globalHs} gives a common bound for
the corresponding global smooth solutions, and hence a common
existence time provided by the local theory. Iterating local stability on each compact time interval
gives convergence in $C_tH^s_x\cap L^2_tW^{s,6}_x$. The limit is the
unique global solution in
$C(\R;H^s)\cap L^2_{\rm loc}(\R;W^{s,6})$, satisfies
\eqref{eq:uniformball}, and depends continuously on its initial data
in $C([-T,T];H^s)\cap L^2([-T,T];W^{s,6})$ for every $T>0$.

\subsection{Spacetime bounds and scattering}
The uniform $H^s$ bound, the interaction Morawetz estimate, and
interpolation give a finite $L^p_tL^q_x$ norm with $2/p+3/q=1$.
Subdivision into intervals where this norm is small then gives scattering. For $\sigma\ge0$, set
\[
 S^\sigma(J')=L^\infty(J';H^\sigma)\cap L^2(J';W^{\sigma,6}),
 \qquad \|u\|_{S^\sigma(J')}=\|\la\nabla\ra^\sigma u\|_{S^0(J')}.
\]
For $\frac{1}{2}<s<1$, define
\begin{equation}\label{eq:criticalmixed}
 q_s=\frac6{3-2s},\qquad
 \theta=\frac{s-\frac{1}{2}}{s-\frac{1}{4}},\qquad
 \frac1p=\frac\theta4,\qquad
 \frac1q=\frac\theta4+\frac{1-\theta}{q_s}.
\end{equation}
Thus $p>6$ and $2/p+3/q=1$. To estimate the cubic term with
two factors in $L^p_tL^q_x$, we use the admissible pair
\begin{equation}\label{eq:derivativepair}
 a_* = \frac{2p}{p-2},\qquad b_* = \frac{6p}{p+4},\qquad
 \frac2{a_*}+\frac3{b_*}=\frac{3}{2},\qquad 2<a_*<3.
\end{equation}
We write $Z(J';u)=\|u\|_{L^p_tL^q_x(J')}$.

\begin{lemma}\label{lem:uniformS}
Let $\frac{1}{2}<s<1$ and $M\ge2$. The global solution with
$\|u_0\|_{H^s}\le M$ satisfies
\begin{equation}\label{eq:uniformS}
 \|u\|_{S^s(\R)}+Z(\R;u)
 +\|\la\nabla\ra^sF(u)\|_{L^{a_*'}_tL^{b_*'}_x(\R)}
 \le C_{s,M}.
\end{equation}
\end{lemma}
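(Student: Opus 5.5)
The proof has three steps: a global $L^4_{t,x}$ bound from interaction Morawetz, interpolation with the uniform $H^s$ bound to control $Z(\R;u)$, and a bootstrap over finitely many intervals on which $Z$ is small.

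\textbf{Step 1: global $L^4_{t,x}$ bound.} By Proposition~\ref{prop:globalHs}, $\sup_t\|u(t)\|_{H^s}\le C_{s,M}$. Since $s>\frac12$, this controls $\sup_t\|u(t)\|_{\dot H^{1/2}}$. The interaction Morawetz estimate \eqref{eq:introInteractionMorawetz}, applied on arbitrary compact intervals, then gives
\[
 \|u\|_{L^4_{t,x}(\R)}^4\lesssim\|u_0\|_2^2\sup_t\|u\|_{\dot H^{1/2}}^2\le C_{s,M}.
\]
This holds first for Schwartz data and passes to general data by the stability statement after Proposition~\ref{prop:globalHs} and Fatou.

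\textbf{Step 2: bound on $Z(\R;u)$.} Sobolev embedding gives $\|u\|_{L^\infty_tL^{q_s}_x}\lesssim\|u\|_{L^\infty_tH^s}\le C_{s,M}$. By \eqref{eq:criticalmixed}, $L^p_tL^q_x$ is the complex interpolation of $L^4_tL^4_x$ (weight $\theta$) and $L^\infty_tL^{q_s}_x$ (weight $1-\theta$). Hölder's inequality therefore gives
\[
 Z(\R;u)\le\|u\|_{L^4_{t,x}}^\theta\|u\|_{L^\infty_tL^{q_s}_x}^{1-\theta}\le C_{s,M}.
\]

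\textbf{Step 3: the $S^s$ and nonlinear bounds.} Partition $\R$ into $n\le C_{s,M}$ intervals $J_k$ with $Z(J_k;u)\le\epsilon$. On each $J_k$ I would use the fractional Leibniz rule with Hölder exponents $\frac1{a_*'}=\frac1{a_*}+\frac2p$ and $\frac1{b_*'}=\frac1{b_*}+\frac2q$. The time relation holds since $1-\frac1{a_*}=\frac12+\frac1p$, which matches $\frac1{a_*}+\frac2p=\frac12-\frac1p+\frac2p$. The space relation $1-\frac1{b_*}=\frac1{b_*}+\frac2q$ has to be checked from \eqref{eq:derivativepair} together with $\frac2p+\frac3q=1$: it reads $\frac2{b_*}=1-\frac2q$, i.e.\ $\frac{p+4}{3p}=1-\frac23+\frac4{3p}$, which is true. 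This gives
\[
 \|\la\nabla\ra^sF(u)\|_{L^{a_*'}_tL^{b_*'}_x(J_k)}\lesssim\|\la\nabla\ra^su\|_{L^{a_*}_tL^{b_*}_x(J_k)}Z(J_k;u)^2\lesssim\epsilon^2\|u\|_{S^s(J_k)}.
\]
Strichartz estimates \eqref{eq:homogeneousStrichartz}--\eqref{eq:linearStrichartz} on $J_k$, starting from $\|u(t_k)\|_{H^s}\le C_{s,M}$, then give $\|u\|_{S^s(J_k)}\lesssim C_{s,M}+\epsilon^2\|u\|_{S^s(J_k)}$. Absorbing the last term requires finiteness of $\|u\|_{S^s(J_k)}$ a priori; this comes from the local theory, or from a continuity argument in the right endpoint of $J_k$. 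Summing over the $n$ intervals yields all three bounds in \eqref{eq:uniformS}.

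\textbf{Main obstacle.} I expect the hardest point to be the Leibniz step: applying the Kato--Ponce estimate with the endpoint-type exponents $b_*<6$ and $q<\infty$, and keeping every exponent in $(1,\infty)$. This should follow from $p>6$, which gives $2<a_*<3$ and $3<b_*<6$.
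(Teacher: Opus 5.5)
Your proposal is correct and follows the paper's proof essentially step for step. Both arguments use interaction Morawetz with the uniform $H^s$ bound, passed to general data by Fatou, then Hölder interpolation against $L^\infty_tL^{q_s}_x$ to bound $Z(\R;u)$, and finally the fractional product rule with $1/a_*'=2/p+1/a_*$, $1/b_*'=2/q+1/b_*$ combined with a Strichartz bootstrap on finitely many intervals where $Z$ is small. Your handling of a priori finiteness matches the paper's device of working on finite restrictions of the unbounded end intervals and then passing to the limit.
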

\begin{proof}
The interaction Morawetz estimate \eqref{eq:introInteractionMorawetz} passes
from the smooth approximants to $u$ by Fatou's lemma on compact
intervals and then by exhaustion. Together with Proposition~\ref{prop:globalHs}, it gives
\[
 \|u\|_{L^4_{t,x}(\R^{1+3})}^4
 \lesssim\|u\|_{L^\infty_tL^2_x}^2
          \|u\|_{L^\infty_t\dot H^{\frac{1}{2}}_x}^2\le C_{s,M}.
\]
Interpolation with the Sobolev bound in $L^\infty_tL^{q_s}_x$ yields
$Z(\R;u)\le C_{s,M}$. The fractional product rule
\cite[Theorem~1]{GO} gives
\begin{equation}\label{eq:scatteringforcing}
 \|\la\nabla\ra^sF(u)\|_{L^{a_*'}_tL^{b_*'}_x(J')}
 \lesssim_s Z(J';u)^2
      \|\la\nabla\ra^su\|_{L^{a_*}_tL^{b_*}_x(J')},
\end{equation}
since $1/a_*'=2/p+1/a_*$ and $1/b_*'=2/q+1/b_*$.
Partition both half-lines, starting at zero, into at most
$C_{s,M}$ intervals $J_j$ on which $Z(J_j;u)\le\epsilon_s$.
Estimates \eqref{eq:homogeneousStrichartz}--\eqref{eq:linearStrichartz}
and \eqref{eq:scatteringforcing} imply
\[
 \|u\|_{S^s(J_j)}
 \le C_s\|u(t_j)\|_{H^s}
       +C_s\epsilon_s^2\|u\|_{S^s(J_j)},
\]
where $t_j$ is the endpoint closer to zero. Choose $\epsilon_s$ so
that the last term can be absorbed. On an unbounded terminal
interval, apply this argument first on finite restrictions and pass
to the limit. Summing the squared $L^2_tW^{s,6}_x$ norms proves the
$S^s$ bound, and \eqref{eq:scatteringforcing} then gives the
nonlinear estimate in \eqref{eq:uniformS}.
\end{proof}

\begin{proof}[Completion of the proof of Theorem~\ref{thm:main}]
Since $a_*'<\infty$, the dual Strichartz estimate
\eqref{eq:dualStrichartz}, applied to $\langle\nabla\rangle^sF(u)$,
and \eqref{eq:uniformS} imply that the integrals defining
\[
 u_+=u_0-\ii\int_0^\infty S(-t)F(u(t))\dd t,\qquad
 u_-=u_0+\ii\int_{-\infty}^0 S(-t)F(u(t))\dd t
\]
converge in $H^s$. The same estimate on the time tails gives
\eqref{eq:scattering}.

For continuous dependence, let $u,v$ satisfy
$\|u(0)\|_{H^s},\|v(0)\|_{H^s}\le M$, and set $w=u-v$. With $r_*=6p/(3p-4)$, the pair
$(p,r_*)$ is admissible and $1/q=1/r_*-\frac{1}{6}$. Sobolev embedding gives
\begin{equation}\label{eq:Zdifference}
 Z(J';w)\lesssim
 \||\nabla|^{\frac{1}{2}}w\|_{L^p_tL^{r_*}_x(J')}
 \lesssim_s\|w\|_{S^s(J')}.
\end{equation}
Since $F(u)-F(v)=|u|^2w+v\bar u w+v^2\bar w$, the fractional
product rule yields
\begin{equation}\label{eq:nonlineardifference}
 \begin{aligned}
 &\|\langle\nabla\rangle^s(F(u)-F(v))\|_{L^{a_*'}_tL^{b_*'}_x(J')}\\
 &\quad\lesssim_s
 \bigl(Z(J';u)^2+Z(J';v)^2\bigr)\|w\|_{S^s(J')}\\
 &\qquad+
 \bigl(Z(J';u)+Z(J';v)\bigr)
 \bigl(\|u\|_{S^s(J')}+\|v\|_{S^s(J')}\bigr)Z(J';w).
 \end{aligned}
\end{equation}
Partition each half-line, starting at zero, into finitely many intervals
$J_j$ on which
$Z(J_j;u)+Z(J_j;v)\le\epsilon$. The number of intervals is bounded
in terms of $s,M,\epsilon$, while \eqref{eq:uniformS} bounds the
$S^s$ norms independently of $\epsilon$. Applying
\eqref{eq:homogeneousStrichartz}--\eqref{eq:linearStrichartz}
to $w$ and using \eqref{eq:Zdifference} and
\eqref{eq:nonlineardifference}, we obtain
\[
 \|w\|_{S^s(J_j)}\le C_s\|w(t_j)\|_{H^s}
       +C_s(\epsilon^2+\epsilon C_{s,M})\|w\|_{S^s(J_j)},
\]
where $t_j$ is the endpoint nearer zero. Choose $\epsilon=\epsilon(s,M)$
so that the last term can be absorbed. Iteration over the finite
partition and summation of \eqref{eq:nonlineardifference} give
\[
 \|w\|_{S^s(\R)}+
 \|\langle\nabla\rangle^s(F(u)-F(v))\|_{L^{a_*'}_tL^{b_*'}_x(\R)}
 \lesssim_{s,M}\|u(0)-v(0)\|_{H^s}.
\]
On unbounded terminal intervals, we first use finite restrictions and
then pass to the limit. Applying the dual Strichartz estimate
\eqref{eq:dualStrichartz} to
$\langle\nabla\rangle^s(F(u)-F(v))$ in the formulas for $u_\pm-v_\pm$
proves continuity of the scattering-state maps.

For $s\ge1$, the result at $s_0=\frac{3}{4}$ gives a global
$L^8_tL^4_x$ bound, uniform on bounded $H^s$ balls. We use the
fractional product estimate with the admissible pair
$(\frac{8}{3},4)$ for $\langle\nabla\rangle^su$. Applying persistence
of regularity on a finite partition into intervals where
$\|u\|_{L^8_tL^4_x}$ is small gives the uniform
$S^s(\R)$ bound, $H^s$ scattering, and continuous dependence of
both the flow and the scattering states \cite{Cazenave,TaoBook}. This proves all assertions of
Theorem~\ref{thm:main}.
\end{proof}

\section*{Acknowledgments}
Z. Zhao was supported by the National Key R\&D Program of China
(grant 2025YFA1018500), the National Natural Science Foundation of China
(grants 12426205 and 12271032), and the Beijing Natural Science Foundation
(grant 1262019). Further support was provided by the Beijing Institute of
Technology Research Fund Program for Young Scholars.\vspace{1mm}

\paragraph{Data availability.}
No datasets were generated or analyzed for this article.\vspace{1mm}

\paragraph{Competing interests.}
The authors declare no competing interests.\vspace{1mm}

\paragraph{AI statements.}
The main ideas of this work, including the wave-packet viewpoint
and the bilinear control of frequency-localized components, were
developed by the authors over an extended period. Discussions with
ChatGPT were helpful in the course of this work, and ChatGPT was
also used for language editing, LaTeX formatting, bibliographic
checks, and consistency checks during the preparation of the
manuscript. The authors critically reviewed and revised all
content and take full responsibility for the manuscript.

\end{document}